\theoremstyle{plain}
\newtheorem{teo}{Theorem}[section]
\newtheorem{lem}[teo]{Lemma}
\newtheorem{prop}[teo]{Proposition}
\newtheorem*{teo*}{Theorem}
\theoremstyle{remark}
\newtheorem{oss}[teo]{Remark}
\theoremstyle{definition}
\numberwithin{equation}{section}
\newcommand{\N}{\mathbb{N}}
\newcommand{\R}{\mathbb{R}}
\newcommand{\tr}{\text{Tr}}
\renewcommand{\a}{\alpha}
\renewcommand{\l}{\lambda}
\renewcommand{\d}{\delta}
\renewcommand{\c}{\gamma}
\newcommand{\diw}{\mathrm{div}}
\let\temp\phi
\let\phi\varphi
\let\varphi\temp
\newcommand{\uu}{u}
\newcommand{\EE}{\mathcal{E}}
\newcommand{\wx}{\widehat{x}}
\newcommand{\GG}{\mathcal{G}}
\newcommand{\RR}{R_a}
\title{An epiperimetric inequality\\ for the lower dimensional obstacle problem}
\author{Francesco Geraci}
\date{}
\begin{document}
\maketitle

\begin{abstract}
In this paper we give a proof of an \emph{epiperimetric inequality} in the setting of the lower dimensional obstacle problem.  
The inequality was introduced by Weiss (Invent. Math., 138 (1999), no. 1, 23–50) for the classical obstacle problem and
 has striking consequences concerning the regularity of the free-boundary.
Our proof follows the approach of Focardi and Spadaro 
(Adv. Differential Equations 21 (2015), no 1-2, 153-200.) 
which uses an homogeneity approach and a $\Gamma$-convergence analysis.
\end{abstract}

\section*{Introduction}\addcontentsline{toc}{section}{Introduction}
The obstacle problem consists in finding the minimizer of a suitable energy among all
functions, with fixed boundary data, constrained to lie above a given obstacle.
The obstacle can live in the whole domain or on
a surface of codimension one, these cases are denoted by the classical obstacle and the lower dimensional obstacle 
(or the thin obstacle) respectively.
In this paper we analyse a particular case of a lower dimensional obstacle
where the obstacle is laid in a hyperplane of
the domain and the energies are the weighted versions of Dirichlet energy. 
The motivation for studying lower dimensional obstacle problems has roots in many applications. 
There are examples in physics, mechanics, biology and financial mathematics and many prime examples can be found in 
\cite{BG90, CS05, CV10, CT04, DL76, Fried, KN77, KS, PSU, Rod}.\\
In this paper we consider the energy
\begin{equation}\label{i:e:EE}
 \EE(v):=\int_{B_1^+} |\nabla v|^2\,x_n^a\,dx,
\end{equation}
and minimize $\EE$ among all functions in the class of admissible functions
\begin{equation}
 \mathfrak{A}_g:=\{v\in H^1(B_1^+, \mu_a)\,:\, v\geq 0\,\it{on}\, B'_1, v=g\,\it{on}\, (\partial{B_1})^+ \},
\end{equation}
where $H^1(A, \mu_a)$ is the weighted Sobolev Space and $\mu_a$ is the measure $\mu_a:=|x_n|^a\,\mathcal{L}^{n}\llcorner B_1$
with $a\in(-1,1)$. 
In what follows we will extend automatically every functions in $\mathfrak{A}_g$ by even symmetry with respect to $\{x_n=0\}$
and for convenience we will indicate any points $x\in \R^n$ as $x=(\wx,x_n)\in \R^{n-1}\times \R$.

By the direct method of calculus of variations it easy to prove the existence and uniqueness of the minimum of \eqref{i:e:EE} on $\mathfrak{A}_g$.
Let $\uu:=\mathrm{min}_{\mathfrak{A}_g}\,\EE$, we note that $\uu$ satisfies the following Euler–Lagrange equations:  
\begin{align}\label{POF'}
		 \left\{  \begin{array}{ll}
			  \uu(\wx,0)\geq 0 & 						\quad \wx\in B_1\\ 
			  \uu(\wx,x_n)=\uu(\wx,-x_n) &	\\ 
			  \diw(|x_n|^a\,\nabla \uu(\wx,x_n))=0 &			\quad x\in B_1\setminus\{(\wx,0)\,:\,\uu(\wx,0)=0\}\\ 
			  \diw(|x_n|^a\,\nabla \uu(\wx,x_n))\leq 0 &    		\quad x\in B_1\,\,\textit{in distributional sense.}
	  		\end{array} \right.
\end{align}
We denote by $\Gamma(\uu):=\partial \{(\wx,0)\in B'_1\,:\, \uu(\wx,0)=0\}\cap B'_1$ the free-boundary of $\uu$.

In order to establish the regularity of the solution $\uu$ and its free-boundary a fundamental tool is
the Almgren \emph{frequency} type function (see \cite{ACS} for $a=0$). For all points $x_0\in \Gamma(u)$:
\begin{equation}\label{i:e:Na}
  N_a^{x_0}(r,\uu):=\frac{r\int_{B_r(x_0)}|\nabla \uu|^2\,d\mu_a}{\int_{\partial B_r(x_0)} \uu^2\,|x_n|^a\,d\mathcal{H}^{n-1}}.
\end{equation}
Caffarelli and Silvestre \cite{CS} proved the monotonicity of function $r\mapsto N_a^{x_0}(r,\uu)$ and some of its properties 
such as,  
the property of being constant over all homogeneous functions; the two authors and Salsa \cite{CSS} established 
the property of the frequency function of being bigger than $1+s$,
where $s:=\frac{1-a}{2}$ is the exponent of the fractional Laplacian
of the trace, on $\R^{n-1}\times \{0\}$, of a global solution of \eqref{POF'}  (see Section~\ref{s:Preliminary}). 

Caffarelli, Salsa and Silvestre in \cite{CSS} proved the optimal regularity of the solution:
$u\in Lip(B_1)$, $\nabla_{\wx} u$ is one-sided $C^s$, or rather $\nabla_{\wx} u \in C^s(B^{\pm}_1\cup B'_1)$, 
and the weighted normal derivative $|x_n|^a \partial_n u$ is  $C^a(B^{\pm}_1\cup B'_1)$ for all $0<a<1-s$.  
In the particular case $s=1/2$, 
Athanasopoulos and Caffarelli \cite{AC} had already proven that $u\in C^{1,\frac{1}{2}}(B^{\pm}_1\cup B'_1)$. 

Thanks to the monotonicity of the functional \eqref{i:e:Na} it is possible to define the frequency of $u$ in $x_0$ as $N_a^{x_0}(0^+,u):=\lim_{r\to 0^+}N_a^{x_0}(r,u)$
and to distinguish the points in $\Gamma(u)$ respect to their frequencies.
The free-boundary $\Gamma(u)$ can be split as:
$$\Gamma(u)=Reg(u)\cup Sing(u) \cup Other(u).$$ 

The points of the subset $Reg(u)$ are called \emph{regular points}, they are the points of the free-boundary with 
least frequency i.e. $1+s$; we will denote $Reg(u)$ as $\Gamma_{1+s}(u)$. Caffarelli, Salsa and Silvestre in \cite{CSS} 
proved that $\Gamma_{1+s}(u)$ is locally a $C^{1,\a}$ $(n-1)$-submanifold. 
In the case $s=1/2$ the regularity of $\Gamma_{1/2}$ was already proved by Athanasopoulos, Caffarelli and Salsa in \cite{ACS}, 
while Focardi and Spadaro \cite{FS} and Garofalo, Petrosyan and Smit Vega Garcia in \cite{GPS16} gave alternative proofs of regularity
using an epiperimetric inequality (see Theorem~\ref{i:t:epiperimetric inequality}). 

The points of the subset $Sing(u)$ are called \emph{singular points} and are the points of the free-boundary with frequency $2m$ with $m\in \N$,
equivalently their contact sets have density zero with respect to $\mathcal{H}^n$. 
In the case $s=1/2$ Garofalo and Petrosyan \cite{GP09} prove that $Sing(u)$ is contained in a countable union of $C^1$ submanifold. 
Very recently Garofalo and Ros-Oton \cite{GR17} extended the result in \cite{GP09} for $s\in (0,1)$. 

The subset $Other(u)$ is the complement of $Reg(u)\cup Sing(u)$ in $\Gamma(u)$. 
Recently Focardi and Spadaro \cite{FS16} gave a complete description of the subset $Sing(u)\cup Other(u)$ 
up to a set of $\mathcal{H}^{n-2}$-measure zero. This result is new also in the framework of the Signorini problem, 
i.e. in the case $s=1/2$, and it is obtained by a combination of analytical and geometric measure theory arguments.\\

The goal of this paper is give an alternative proof of the regularity of $\Gamma_{1+s}(u)$ given by 
Caffarelli, Salsa and Silvestre in \cite{CSS}. 
Our proof use an epiperimetric inequality and its consequences. We extend the result proved by Focardi and Spadaro in \cite{FS} 
in the case $s\in (0,1)$.
The two authors outline the presence in their proof of two competing variational principles that contribute to the 
achievement of proof.\\

In order to enunciate the epiperimetric inequality 
we introduce a sequence of rescaled functions $u_{x_0,r}=\frac{u(x_0+rx)}{r^{1+s}}$ and an auxiliary energy ``à la Weiss''
\begin{equation}\label{e:e-weiss 1+s}
 W_{1+s}^{x_0}(r,u):=\frac{1}{r^{n+1}}\int_{B_r(x_0)}|\nabla u_r|^2\,d\mu_a -\frac{1+s}{r^{n+1}}\int_{\partial B_r(x_0)}|u_r|^2\,|x_n|^a\,d\mathcal{H}^{n-1},
\end{equation}
which is the sum of a volume energy and a boundary energy. 
We note that $1+s$, the frequency of points of the free-boundary examined, is the exponent of the 
scaling factor of sequence $u_{x_0,r}$ (see equation \eqref{d:ur 1+s}) and the coefficient of boundary energy. 
The existence of blow-ups is a consequence of a gradient estimate of rescaled function in $L^2(B_1,\mu_a)$; 
reasoning by contradiction, thanks to properties of the frequency and the optimal regularity of the solution we prove the 
$(1+s)$-homogeneity of blow-ups.
So, according to a result of classification by Caffarelli, Salsa and Silvestre \cite{CSS} we state the result of the classification
of $(1+s)$-homogeneous global solutions of the fractional obstacle, which constitute the following closed cone 
\begin{equation*}
 \mathfrak{H}_{1+s}:=\{\l\,h_e\,\,:\,\, e\in \mathbb{S}^{n-2}, \l\in [0,+\infty)\}\subset H^1_{loc}(\R^n,\mu_a),
\end{equation*}
with 
\begin{equation*}
 h_e(x):=\left(s^{-1}\wx\cdot e -\sqrt{(\wx\cdot e)^2+x_n^2}\right)\left(\sqrt{(\wx\cdot e)^2+x_n^2}+\wx\cdot e\right)^s.
\end{equation*}

The key result presented in this paper is an alternative proof 
(a first proof, with an extra hypothesis, was given by Garofalo, Petrosyan, Smit and Vega Garcia in \cite{GPPS}) 
of a Weiss' epiperimetric inequality for the fractional obstacle problem (cf. \cite[Theorem 1]{Weiss}). 
\begin{teo}[Epiperimetric inequality]\label{i:t:epiperimetric inequality}
 Let $\underline{0}\in \Gamma_{1+s}(u)$. There exists a dimensional constant $\kappa\in(0,1)$ such that if $c\in H^1(B_1,\mu_a)$ is a function $(1+s)$-homogeneous 
 for which $c\geq 0$ on $B'_1$ then
\begin{equation*}
 \inf_{v\in \mathfrak{A}_c} W_{1+s}^{\underline{0}}(1,v)\leq (1-\kappa)W_{1+s}^{\underline{0}}(1,c).
\end{equation*}
\end{teo}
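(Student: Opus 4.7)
My approach is a contradiction/compactness argument following Focardi--Spadaro. Suppose the inequality fails: for every $k\in\N$ there is a $(1+s)$-homogeneous $c_k\in H^1(B_1,\mu_a)$, nonnegative on $B_1'$, satisfying
\[
  \inf_{v\in\mathfrak{A}_{c_k}} W_{1+s}^{\underline{0}}(1,v) \;>\; \bigl(1-\tfrac{1}{k}\bigr)\,W_{1+s}^{\underline{0}}(1,c_k).
\]
Since $c_k$ itself is an admissible competitor, this forces $W_{1+s}^{\underline{0}}(1,c_k)>0$; the quadratic homogeneity of $W_{1+s}^{\underline{0}}$ in its argument then lets me normalise $W_{1+s}^{\underline{0}}(1,c_k)=1$. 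The target is to extract a limit $c_\infty$ which, by $\Gamma$-convergence, is forced to minimise $W_{1+s}^{\underline{0}}(1,\cdot)$ on $\mathfrak{A}_{c_\infty}$; the Euler--Lagrange conditions then coincide with the thin obstacle system~\eqref{POF'}, so by the classification recalled in the introduction $c_\infty\in\mathfrak{H}_{1+s}$, where the Weiss energy vanishes, contradicting the normalisation.

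\textbf{Compactness.} Each $c_k$ being $(1+s)$-homogeneous, its weighted $H^1(B_1,\mu_a)$-norm reduces (by integration in the radial variable) to the weighted Sobolev norm of its trace $\phi_k$ on $\partial B_1$. The explicit expression of $W_{1+s}^{\underline{0}}(1,c_k)$ as a Rayleigh-type quadratic form in $\phi_k$ on the weighted sphere, combined with the normalisation and the sign constraint $c_k\ge 0$ on $B_1'$, yields a uniform bound $\|c_k\|_{H^1(B_1,\mu_a)}\le C$ after a further harmless rescaling that fixes $\|\phi_k\|_{L^2(\partial B_1,|x_n|^a)}$. Up to a subsequence, $c_k\rightharpoonup c_\infty$ weakly in $H^1(B_1,\mu_a)$, strongly in $L^2(B_1,\mu_a)$, with strong trace convergence on $\partial B_1$ and on $B_1'$. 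The limit $c_\infty$ inherits $(1+s)$-homogeneity and the sign constraint on $B_1'$.

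\textbf{$\Gamma$-convergence.} I would then show that the constrained functionals $F_k(v):=W_{1+s}^{\underline{0}}(1,v)+\chi_{\mathfrak{A}_{c_k}}(v)$ $\Gamma$-converge to $F_\infty(v):=W_{1+s}^{\underline{0}}(1,v)+\chi_{\mathfrak{A}_{c_\infty}}(v)$ in the weak $H^1(B_1,\mu_a)$-topology. The $\Gamma$-$\liminf$ inequality follows from weak lower semicontinuity of the Dirichlet part of $W_{1+s}^{\underline{0}}$ and strong trace convergence, which transfers both the Dirichlet datum on $(\partial B_1)^+$ and the obstacle sign constraint on $B_1'$ to the limit. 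For the recovery sequence, given $v\in\mathfrak{A}_{c_\infty}$ I propose the convex-combination construction
\[
  v_k(x):=\eta_k(|x|)\,c_k(x)+\bigl(1-\eta_k(|x|)\bigr)\,v(x),
\]
with $\eta_k$ a radial cut-off equal to $1$ on $\partial B_1$ and supported in a thin boundary annulus $\{|x|\ge 1-\delta_k\}$: since $c_k,v\ge 0$ on $B_1'$, the convex combination respects the sign constraint; it matches $c_k$ on $\partial B_1$; and the excess energy is controlled by $\delta_k^{-1}\|c_k-c_\infty\|^2_{L^2(\partial B_1,|x_n|^a)}$, which vanishes along a suitable $\delta_k\to 0$ thanks to the strong trace convergence. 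Together with compactness this yields
\[
  \inf_{\mathfrak{A}_{c_\infty}} W_{1+s}^{\underline{0}}(1,\cdot) \;=\; \lim_{k\to\infty}\inf_{\mathfrak{A}_{c_k}} W_{1+s}^{\underline{0}}(1,\cdot) \;=\; 1,
\]
and since $c_\infty\in\mathfrak{A}_{c_\infty}$ with $W_{1+s}^{\underline{0}}(1,c_\infty)\le\liminf W_{1+s}^{\underline{0}}(1,c_k)=1$ by weak lower semicontinuity, $c_\infty$ itself attains the infimum.

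\textbf{Closing the contradiction.} The Euler--Lagrange conditions for the minimisation of $W_{1+s}^{\underline{0}}(1,\cdot)$ over $\mathfrak{A}_{c_\infty}$ coincide with~\eqref{POF'} with Dirichlet datum $c_\infty|_{(\partial B_1)^+}$, so $c_\infty$ is a $(1+s)$-homogeneous global solution of the fractional obstacle problem and, by the classification recalled in the introduction, $c_\infty\in\mathfrak{H}_{1+s}$. An integration by parts using the Euler identity $x\cdot\nabla c_\infty=(1+s)c_\infty$, the equation $\diw(|x_n|^a\nabla c_\infty)=0$ off the contact set, and $c_\infty=0$ on the contact set then yields $W_{1+s}^{\underline{0}}(1,c_\infty)=0$, contradicting $W_{1+s}^{\underline{0}}(1,c_\infty)=1$. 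The principal technical difficulty is the recovery sequence construction: matching the Dirichlet datum $c_k$, preserving the sign constraint on $B_1'$, and obtaining vanishing excess energy through the degenerate weight $|x_n|^a$ near $\{x_n=0\}$ is where most of the analytic work concentrates.
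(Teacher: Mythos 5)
Your overall blueprint—contradiction, compactness, $\Gamma$-convergence of constrained Weiss energies, classification of the limit—is indeed the spirit of Focardi--Spadaro and of this paper, and the closing step (a $(1+s)$-homogeneous minimiser of the constrained Dirichlet energy is a global solution, hence lies in $\mathfrak{H}_{1+s}$, hence has vanishing Weiss energy) is sound. The gap is in the compactness step, and it is fatal rather than technical. The functional $W_{1+s}^{\underline{0}}(1,\cdot)$ vanishes identically on the cone $\mathfrak{H}_{1+s}$, so the normalisation $W_{1+s}^{\underline{0}}(1,c_k)=1$ does \emph{not} bound $\|c_k\|_{H^1(B_1,\mu_a)}$. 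Concretely, fix $e\in\mathbb{S}^{n-2}$ and a $(1+s)$-homogeneous, even, $\phi\in H^1(B_1,\mu_a)$ vanishing on $B_1'$ with $W_{1+s}^{\underline{0}}(1,\phi)>0$; then $c_k:=k\,h_e+\phi$ satisfies $c_k\ge 0$ on $B_1'$, and since $\delta W_{1+s}^{\underline{0}}(1,k h_e)[\phi]=-4k\int_{B_1'}\phi\,\RR(h_e)\,d\mathcal{H}^{n-1}=0$ and $W_{1+s}^{\underline{0}}(1,kh_e)=0$, one gets $W_{1+s}^{\underline{0}}(1,c_k)=W_{1+s}^{\underline{0}}(1,\phi)$ for every $k$, while $\|c_k\|_{H^1}\to\infty$. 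Thus a sequence with normalised Weiss energy can escape to infinity along the cone; there is no uniform $H^1$ bound, hence no weak limit, and the $\Gamma$-convergence and equi-coercivity you invoke (which require $\|c_k\|_{L^2(\partial B_1,|x_n|^a)}$ bounded) collapse. The ``further harmless rescaling that fixes $\|\phi_k\|_{L^2(\partial B_1,|x_n|^a)}$'' is not harmless: it destroys the normalisation $W_{1+s}^{\underline{0}}(1,c_k)=1$ that drives your contradiction (in the example above the rescaled energy tends to $0$, and the limit would be $h_e$ with vanishing Weiss energy—no contradiction).

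This is precisely the difficulty the paper's proof is built to handle, and it is why the argument is much longer. The paper first uses quadratic homogeneity of $W_{1+s}^{\underline{0}}(1,\cdot)$ and the cone structure of $\mathfrak{H}_{1+s}$ to reduce to $\mathrm{dist}_{H^1}(c,\mathfrak{H}_{1+s})<\delta$, then in the contradiction argument it subtracts the nearest cone element $\psi_j=\l_j h$ and studies the normalised difference $z_j=(c_j-\psi_j)/\d_j$ with $\|z_j\|_{H^1}=1$—this is what gives genuine compactness. The possibly unbounded component along the cone is tracked by $\theta_j=\l_j/\d_j$, and the $\Gamma$-limit of the auxiliary functionals $\GG_j$ splits into three regimes according to whether $\theta_j$ stays bounded or blows up, each excluded by a separate variational or orthogonality argument (Steps 5--9). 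Without this modding-out of the degenerate cone direction, your compactness and hence your $\Gamma$-limit identification of $c_\infty$ cannot get off the ground.
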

Taking the epiperimetric inequality into account, Weiss proved this result in \cite{Weiss} in the classical obstacle case. 
Recently Garofalo, Petrosyan, Pop and Smit Vega Garcia \cite{GPPS} proved a similar epiperimetric inequality, with an extra hypothesis, 
for the fractional obstacle problem with drift in the case of $s\in (1/2,1)$.

In the case of obstacle $0$ and without drift our inequality is stronger. 
Indeed Garofalo \emph{et. al.} in \cite{GPPS} require an extra hypothesis of closeness 
between the function $c$ and a fixed blow-up limit.
We do not need such an assumption. 
On the other hand, due to homogeneity we can reduce to functions $c$ close to cone of global solutions $\mathfrak{H}_{1+s}$.

By contradicting the closeness assumption we obtain a quasi-minimality condition for a sequence of auxiliary functionals.
Using a $\Gamma$-convergence argument we inspect the $\Gamma$-limits of the sequence of auxiliary energies and analyse
their minimizer that represents the directions along which the epiperimetric inequality may fail.
Using a variational method we obtain that such minimizers show in the same time contradictory relationship with the cone 
$\mathfrak{H}_{1+s}$.

The epiperimetric inequality is a key ingredient to deduce the following estimate of the decay of energy:
\begin{equation}\label{e:decay}
 W_{1+s}^{x_0}(r,u)\leq C\,r^\gamma,
\end{equation}
where $C$ and $\gamma$ are positive constants.
Thanks to the decay estimate \eqref{e:decay} we prove a property of nondegeneration of solutions, 
from which we deduce that the blow-ups are nonzero. 
Proceeding as in \cite{FS}
we can prove the uniqueness of blow-ups and the regularity of $\Gamma_{1+s}(\uu)$; we state this results in
Proposition~\ref{p:uniqueness blowup 1+s} and Theorem~\ref{t:reg 1+s} respectively and do not prove them because 
they follow by the epiperimetric inequality and its consequences as in \cite[Proposition 4.8 and Proposition 4.10]{FS}\\

What follows is a summary of the structure of this paper: in section~\ref{s:3frequency} we introduce the frequency 
and its properties and define $\Gamma_{1+s}(u)$ the subset of free-boundary with low frequency.
In section~\ref{s:3existence} we prove the existence and $(1+s)$-homogeneity of blow-ups in the points in $\Gamma_{1+s}(u)$
and in section~\ref{s:3classification}, thanks to a result by \cite{CSS}, we characterize the $(1+s)$-homogeneous global 
solution of the fractional obstacle problem. 
Section~\ref{s:epiperimetric and consequences} is devoted to establish the epiperimetric inequality and its consequences 
in the framework of
the regularity of the free-boundary, a decay estimate of an auxiliary energy, the nondegeneracy of the solution 
and the uniqueness of the blow-ups.
In section~\ref{s:3reg}
we state the regularity of $\Gamma_{1+s}(u)$.

\section{Preliminary results}\label{s:Preliminary}
Let $\uu\in \mathrm{min}_{\mathfrak{A}_g}\,\EE$; we denote by $\Lambda(\uu)$ its coincidence set, 
$\Lambda(\uu):=\{\wx\in B'_1\,:\, \uu(\wx,0)=0\}$,
and by $\Gamma(\uu)$ its free-boundary $\Gamma(\uu):=\partial \Lambda(\uu)$ in $B'_1$ topology.

Caffarelli and Silvestre in \cite{CS} showed that the Euler-Lagrange equations of $\uu$ \eqref{POF'} are equivalent 
to the following equations:
\begin{align}\label{POF}
		 \left\{  \begin{array}{ll}
			  \uu(\wx,0)\geq 0 & 						\quad \wx\in(B'_1)^+\\ 
			  \diw(x_n^a\,\nabla \uu(\wx,x_n))=0 &				\quad x_n>0\\ 
			  \lim_{x_n\to0^+} x_n^a\partial_n\uu(\wx,x_n)=0 &    		\quad \uu(\wx,0)> 0\\ 
			  \lim_{x_n\to0^+} x_n^a\partial_n\uu(\wx,x_n)\leq 0 &    	\quad \wx\in(B_1)^+,
			\end{array} \right.
\end{align}
which are related to the study of the classical obstacle problem in $\R^{n-1}$ for fractional Laplacian $(\Delta)^s$ 
with $s\in (0,1)$, where $a=1-2s$. In particular, for all $v$ solution of $\diw(x_n^a\,\nabla v(\wx,x_n))=0$ on $B_1^+$, with 
an appropriate extension to the whole $\R^n$,  
there exists the limit $\lim_{x_n\to0^+} x_n^a\partial_n v(\wx,x_n)$  
and $\lim_{x_n\to0^+} x_n^a\partial_n v(\wx,x_n)=C(-\Delta)^s f(\wx)$ with $f$ the trace of $v$ on $\R^{n-1}\times \{0\}$ and $C$ 
a constant depending on $n$ and $s$ (cf. \cite{CS}).

For $x_n>0$, $\uu(\wx,x_n)$ is smooth so the second condition in \eqref{POF} holds in the classical sense, while the third and fourth 
condition in \eqref{POF} hold in the weak sense. 
By Silvestre \cite{S} $\uu(\wx,0)\in C^{0,\a}$ with $\a<s$, in particular if $a<\a<s$ the limit 
$\lim_{x_n\to0^+} x_n^a\partial_n\uu(\wx,x_n)$ can be considered in the classical sense. 
By \cite[Proposition 3.10]{S} we also know that $\partial_{ee}\uu\geq 0$ for all 
$e\in \mathbb{S}^{n-2}\subset \R^{n-1}\times \{0\}$, or rather $\uu$ is semiconvex in the variable $\wx$; 
moreover if the
obstacle $\phi\in C^{1,1}$ then $\partial_{ee}\uu\geq -\sup|D^2 \phi|$.

The function $\uu$, 
can be extended by simmetry $\uu(\wx,x_n)=\uu(\wx,-x_n)$. 
So, as shown in \cite{CS}
we can rewrite the problem \eqref{POF} as \eqref{POF'}.

In order to simplify the notation, we introduce the following symbol: 
\begin{equation}\label{e:def R}
 \RR(\psi):=\lim_{\varepsilon\to0^+} \varepsilon^a\partial_n\psi(\wx,\varepsilon)  
\end{equation}
for all functions $\psi$ which are solutions for
\begin{equation}\label{e:La+sym}
 \left\{  \begin{array}{ll}
			  \psi(\wx,x_n)=\psi(\wx,-x_n) \\ 
                         L_a(\psi):=\diw(|x_n|^a\,\nabla \psi(\wx,x_n))=0 & \qquad\qquad \{x_n\neq 0\}.
			  \end{array} \right.
\end{equation}

In what follows, we shall state a uniform estimate on the solution $\uu$, so we report a quantitative result stated in 
\cite[Theorem 2.1]{FS16}

\begin{teo}\label{existence regularity uu}
 For every boundary datum $g\in H^1(B_1,\mu_a)$ that respects the condition of compatibility with the problem, 
 i.e. $g(\wx,x_n)=g(\wx,-x_n)$ and $g(\wx,0)\geq 0$, there exists a unique solution $\uu$ to the fractional obstacle problem \eqref{POF'}.
 Moreover, $\partial_{x_i}\uu \in C^s(B_{1/2})$ for $i=1,\dots,n-1$ and $|x_n|^a\partial_{x_n}u\in C^{\a}(\overline{B^+_{1/2}})$
 for all $0<\a<1-s$, and
 \begin{equation}\label{estimate regularity uu}
\|u\|_{X_{s,\a}(\overline{B^+_{1/2}})}:= \|u\|_{C^0(\overline{B^+_{1/2}})}+\|\nabla_{\wx}\uu\|_{C^s(\overline{B^+_{1/2}})} + \||x_n|^a\partial_{x_n}\uu\|_{C^{\a}(\overline{B^+_{1/2}})}\leq C \|\uu\|_{L^2( B^+_1,\mu_a)},
 \end{equation}
 with $X_{s,\a}(\overline{B^+_{1/2}}):=\left\{v\in H^1(B_{1/2})\,\,:\,\,v\in C^0(\overline{B^+_{1/2}}),\, 
 \nabla_{\wx}v\in C^s(\overline{B^+_{1/2}})\,\, \textrm{and}\,\,\, |x_n|^a\partial_{x_n}v\in C^{\a}(\overline{B^+_{1/2}})\right\}$.
\end{teo}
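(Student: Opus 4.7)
The plan is to treat existence and uniqueness by the direct method of the calculus of variations, and to reduce the regularity statement together with the quantitative bound to the optimal regularity of Caffarelli-Salsa-Silvestre combined with a compactness argument.

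For existence and uniqueness, I would observe that the weighted Dirichlet energy $\EE$ is strictly convex, weakly lower semi-continuous, and coercive on the affine class $g + H^1_0(B_1^+,\mu_a)$ via a weighted Poincaré inequality, which holds because $|x_n|^a$ is a Muckenhoupt $A_2$ weight for $a \in (-1,1)$. The constraint set $\mathfrak{A}_g$ is convex, non-empty (it contains $g$ itself, since $g(\wx,0)\geq 0$ by hypothesis) and closed under weak $H^1(B_1^+,\mu_a)$ convergence, because by Fabes-Kenig-Serapioni the trace into $L^2(B_1')$ is compact for $A_2$ weights and therefore preserves the pointwise sign constraint on $B_1'$. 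The direct method then yields a unique minimizer $\uu$, and the Euler-Lagrange system \eqref{POF'} is obtained by testing against admissible one-sided variations $\uu + t(\phi - \uu)$ with $\phi \in \mathfrak{A}_g$ and $t \in [0,1]$, together with two-sided variations supported where $\uu(\wx,0) > 0$.

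For the regularity I would invoke the optimal regularity theorem of \cite{CSS}, whose proof combines: tangential semiconvexity from \cite{S}, global Lipschitz regularity via iterated semiconvexity and Harnack-type arguments, the Almgren frequency monotonicity \eqref{i:e:Na} to classify points of $\Gamma(\uu)$ by their frequency, and Schauder plus boundary Harnack estimates for the divergence-form degenerate operator $\diw(|x_n|^a \nabla \cdot)$ with its $A_2$ weight. These ingredients deliver $\nabla_{\wx} \uu \in C^s$ and $|x_n|^a \partial_n \uu \in C^{\alpha}$ for every $\alpha < 1-s$, uniformly up to the thin set $B_1'$.

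Finally, the quantitative bound \eqref{estimate regularity uu} can be obtained by a contradiction/rescaling argument. If it failed, one would have solutions $u_k$ with $\|u_k\|_{L^2(B_1^+,\mu_a)}$ bounded but $\|u_k\|_{X_{s,\alpha}(\overline{B_{1/2}^+})} \to \infty$. After normalization $v_k := u_k / \|u_k\|_{X_{s,\alpha}}$ and extraction of a Hölder-convergent subsequence (using the compactness built into the definition of $X_{s,\alpha}$), the weighted De Giorgi-Nash-Moser bound of Fabes-Kenig-Serapioni would force $\|v_k\|_{L^\infty} \to 0$, contradicting $\|v_k\|_{X_{s,\alpha}} = 1$; the one-sided constraints on $B_1'$ and the distributional inequality in \eqref{POF'} are preserved under this uniform convergence, so the limit is still an admissible object. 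The main obstacle is precisely the regularity step: the sharp exponent $s$ for $\nabla_{\wx} \uu$ and the fact that the estimate is uniform up to $B_1'$, where the degeneracy of the weight meets the free-boundary, rely on the full \cite{CSS} machinery — frequency monotonicity, classification of blow-ups along the cone $\mathfrak{H}_{1+s}$, and degenerate boundary Harnack — rather than on soft arguments alone.
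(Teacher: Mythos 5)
The paper does not actually prove this theorem: it is quoted verbatim from \cite[Theorem 2.1]{FS16}, so there is no internal argument to compare against. Your first two steps are the standard route and are fine: the direct method on the convex, weakly closed set $\mathfrak{A}_g$ (non-empty by the compatibility hypothesis, closed because the trace into $L^2(B'_1)$ is compact --- the paper cites \cite[Theorem 3.4]{Foc09} for this rather than Fabes--Kenig--Serapioni, a minor attribution point), and the qualitative regularity imported from \cite{CSS}.

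The genuine gap is in your compactness/contradiction argument for the quantitative bound \eqref{estimate regularity uu}. After normalizing $v_k:=u_k/\|u_k\|_{X_{s,\a}}$ you get $\|v_k\|_{X_{s,\a}}=1$ and $\|v_k\|_{L^2(B_1^+,\mu_a)}\to 0$, hence (granting weighted local boundedness) $\|v_k\|_{L^\infty(B_{3/4}^+)}\to 0$. But this does not contradict $\|v_k\|_{X_{s,\a}}=1$: the $X_{s,\a}$ norm is carried by the H\"older \emph{seminorms} of $\nabla_{\wx}v_k$ and $|x_n|^a\partial_n v_k$, and these are not continuous (not even from below in the useful direction) under convergence in any weaker topology. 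A sequence such as $f_k(x)=k^{-s}\phi(kx)$ has $[f_k]_{C^{0,s}}\simeq 1$ while $f_k\to 0$ uniformly and in every $C^{0,s'}$ with $s'<s$, so ``the limit is $0$ but the norms are $1$'' is not a contradiction; the unit mass of the norm simply escapes in the limit. To upgrade $\|v_k\|_{L^\infty}\to 0$ to $\|v_k\|_{X_{s,\a}}\to 0$ you would need precisely an estimate of the form $\|v\|_{X_{s,\a}(\overline{B^+_{1/2}})}\le C\|v\|_{L^\infty(B^+_{3/4})}$, i.e.\ (up to the elementary local boundedness step) the very inequality you are trying to prove --- the argument is circular. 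The correct route is not compactness but constant-tracking: the estimates in \cite{CSS} are already quantitative with right-hand side controlled by $\|u\|_{L^\infty}$ or $\|u\|_{H^1}$ on a larger half-ball; one then chains them with a Caccioppoli inequality (obtained by testing the variational inequality with a cut-off, using $u\,\RR(u)=0$ on $B'_1$) and with the weighted De Giorgi--Nash--Moser local boundedness to reach the $L^2(B_1^+,\mu_a)$ norm on the right, exploiting the positive $1$-homogeneity of the zero-obstacle problem to normalize. This is in substance how \cite{FS16} states and obtains the estimate.
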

Next, we state a
version of the Divergence Theorem that will be used frequently in the paper.
\begin{teo}[Divergence Theorem]\label{t:divergence theorem fractional}
Let $\phi\in H^1(B_1,\mu_a)$ and $\psi$ be a solution of \eqref{e:La+sym}, then
\begin{equation}\label{e:divergence theorem fractional}
\begin{split}
\int_{B_1} \nabla \psi\cdot\nabla \phi d\mu_a
= \int_{\partial B_1} \phi \,\nabla\psi \cdot x \,|x_n|^a\,d\mathcal{H}^{n-1} - 2\int_{B'_1} \phi\RR(\psi)\,d\mathcal{H}^{n-1}
\end{split}
\end{equation}
\end{teo}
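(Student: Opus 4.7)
The plan is to split the unit ball along the thin hyperplane $\{x_n=0\}$ and apply the classical divergence theorem on each half, truncating away from the thin set to stay in the region where $\psi$ is smooth and $L_a(\psi)=0$ pointwise. More precisely, for $\varepsilon>0$ set $U_\varepsilon^\pm := B_1 \cap \{\pm x_n > \varepsilon\}$. First I would assume $\phi\in C^1(\overline{B_1})$ and recover the general case by density using trace theorems for $H^1(B_1,\mu_a)$ at the end.

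On $U_\varepsilon^+$, where $x_n^a$ is smooth and positive, the vector field $\phi\,x_n^a\nabla\psi$ is $C^1$, and its divergence equals $x_n^a\nabla\phi\cdot\nabla\psi$ because $L_a(\psi)=0$ on $\{x_n>0\}$. The classical divergence theorem yields
\begin{align*}
 \int_{U_\varepsilon^+} x_n^a\,\nabla\phi\cdot\nabla\psi\,dx
 &= \int_{\partial B_1 \cap \{x_n>\varepsilon\}} \phi\,x_n^a\,\nabla\psi\cdot x\,d\mathcal{H}^{n-1} \\
 &\quad - \int_{B_1 \cap \{x_n=\varepsilon\}} \phi(\wx,\varepsilon)\,\varepsilon^a\,\partial_n\psi(\wx,\varepsilon)\,d\mathcal{H}^{n-1},
\end{align*}
the minus sign coming from the outward normal $-e_n$ on the slice $\{x_n=\varepsilon\}$. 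The symmetric computation on $U_\varepsilon^-$, combined with the evenness $\psi(\wx,x_n)=\psi(\wx,-x_n)$ (which gives $\partial_n\psi(\wx,-\varepsilon)=-\partial_n\psi(\wx,\varepsilon)$), produces an identical slice contribution with the same sign.

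Then I would pass to the limit $\varepsilon\to 0^+$. The volume integrals on the left assemble to $\int_{B_1}\nabla\phi\cdot\nabla\psi\,d\mu_a$ by dominated convergence. The spherical terms converge to $\int_{\partial B_1}\phi\,\nabla\psi\cdot x\,|x_n|^a\,d\mathcal{H}^{n-1}$. For the two slice integrals, the definition \eqref{e:def R} of $\RR(\psi)$ and the regularity of the weighted normal derivative $|x_n|^a\partial_n\psi$ (recall from Theorem~\ref{existence regularity uu} and the preliminary discussion that this quantity extends continuously up to $\{x_n=0\}$ for the class of solutions under consideration) give $\varepsilon^a\partial_n\psi(\wx,\varepsilon)\to \RR(\psi)$ in a sense strong enough to integrate against $\phi$. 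Adding the two identical contributions yields the claimed $-2\int_{B_1'}\phi\,\RR(\psi)\,d\mathcal{H}^{n-1}$. Finally, density of smooth functions in $H^1(B_1,\mu_a)$ together with the weighted trace inequalities extend the identity to arbitrary $\phi\in H^1(B_1,\mu_a)$.

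The main obstacle is the justification of the limit of the slice integrals, since a priori $\psi$ is only a finite-energy solution and one needs the weighted flux $\varepsilon^a\partial_n\psi(\wx,\varepsilon)$ to stabilise uniformly on compact subsets of $B_1'$ and to define a function against which $\phi$ can be integrated. This is precisely why the symbol $\RR(\psi)$ is introduced in \eqref{e:def R} as a pointwise limit, and why regularity results for $A_2$-weighted elliptic equations with the Muckenhoupt weight $|x_n|^a$ are needed; the rest of the argument is then a routine truncation-and-limit computation.
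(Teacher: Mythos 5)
The paper does not actually supply a proof of this Divergence Theorem---it is stated as a black box and invoked throughout Sections 2--5---so there is no paper argument to compare against. Your truncation-and-limit strategy is the natural and correct one: split $B_1$ along $\{x_n=0\}$, peel off a slab $\{|x_n|<\varepsilon\}$, integrate by parts on each half using the classical divergence theorem (the divergence of $\phi\,|x_n|^a\nabla\psi$ reduces to $|x_n|^a\,\nabla\phi\cdot\nabla\psi$ away from the thin set because $L_a\psi=0$ there), match up the two slab contributions via the even symmetry $\partial_n\psi(\wx,-\varepsilon)=-\partial_n\psi(\wx,\varepsilon)$, and let $\varepsilon\downarrow 0$. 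The signs and the factor of $2$ come out correctly in your computation.

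Two small points worth making explicit. First, the statement of the theorem is silent on the regularity of $\psi$ near $\partial B_1$: both the spherical term $\int_{\partial B_1}\phi\,\nabla\psi\cdot x\,|x_n|^a\,d\mathcal H^{n-1}$ and your passage to the limit over $\partial B_1\cap\{|x_n|>\varepsilon\}$ presuppose that $\nabla\psi$ has a well-defined trace on the sphere with enough integrability. In the paper's actual uses $\psi$ is always a globally defined $(1+s)$-homogeneous function such as $h_e$, $\lambda h_e$, or a rescaled minimizer, so this is automatic, but your proof (like the theorem as stated) tacitly assumes it; it would be cleaner to add that hypothesis. Second, you correctly isolate the genuine technical content: the convergence of $\varepsilon^a\partial_n\psi(\cdot,\varepsilon)\to\RR(\psi)$ against the trace of $\phi$. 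For the purposes of the paper this is justified by the local Hölder continuity of $|x_n|^a\partial_n\psi$ up to $\{x_n=0\}$ (Theorem~\ref{existence regularity uu} and the discussion after \eqref{POF}), combined with a cut-off to stay in compact subsets of $B_1'$ before removing the cut-off; making that last localization step explicit would close the remaining gap. The density step in $\phi$ via Theorems~\ref{T:tracciaFoc09} and~\ref{T:traccia partialB1} is fine.
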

We conclude the paragraph stating some results related to weighted Sobolev spaces. 
We rewrite these results for our aims, but these also hold in more general conditions. 

We state the analogous of the Banach-Alaoglu-Bourbaki Theorem (see \cite[Theorem III.15]{Brezis}) for which
every bounded and closed set in $H^1(B_1,\mu_a)$ is relatively compact in the weak topology. 
\begin{teo}[Banach-Alaoglu-Bourbaki Theorem {\cite[Theorem~1.31]{HKM}}]\label{T:1.31 HKM}
Let $v_j$ be a bounded sequence in $H^1(B_1,\mu_a)$. Then there exists a subsequence $v_{j_i}$ and a function 
 $v\in H^1(B_1,\mu_a)$ such that $v_{j_i}\rightharpoonup v$ in $L^2(B_1,\mu_a)$ 
 and $\nabla v_{j_i}\rightharpoonup \nabla v$ in $L^2(B_1,\mu_a; \R^n)$.
\end{teo}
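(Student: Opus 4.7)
The plan is a direct application of the sequential Banach--Alaoglu theorem in separable Hilbert spaces, once $H^1(B_1,\mu_a)$ is recognised as such a space. Equipped with the inner product
\begin{equation*}
\langle u,v\rangle_{H^1(\mu_a)} := \int_{B_1} uv\, d\mu_a + \int_{B_1} \nabla u\cdot \nabla v\, d\mu_a,
\end{equation*}
$H^1(B_1,\mu_a)$ embeds isometrically into the separable Hilbert space $L^2(B_1,\mu_a)\oplus L^2(B_1,\mu_a;\R^n)$ via $v\mapsto (v,\nabla v)$, and so is itself a separable inner-product space. Its completeness is the content of the standard theory of weighted Sobolev spaces for $A_2$ Muckenhoupt weights developed in \cite{HKM}, which covers our setting because $a\in(-1,1)$ places $|x_n|^a$ in $A_2$.

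By the sequential Banach--Alaoglu theorem in separable Hilbert spaces, every norm-bounded sequence admits a weakly convergent subsequence. Applied to $(v_j)$, this produces a subsequence $(v_{j_i})$ and a limit $v\in H^1(B_1,\mu_a)$ with $v_{j_i}\rightharpoonup v$ in $H^1(B_1,\mu_a)$. To translate this into the two stated $L^2$-convergences, I would observe that for every $f\in L^2(B_1,\mu_a)$ and every $g\in L^2(B_1,\mu_a;\R^n)$ the linear functional $u\mapsto \int_{B_1} fu\,d\mu_a + \int_{B_1} g\cdot\nabla u\,d\mu_a$ is continuous on $H^1(B_1,\mu_a)$. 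Consequently weak convergence in $H^1(B_1,\mu_a)$ immediately implies $v_{j_i}\rightharpoonup v$ in $L^2(B_1,\mu_a)$ (choose $g=0$) and $\nabla v_{j_i}\rightharpoonup \nabla v$ in $L^2(B_1,\mu_a;\R^n)$ (choose $f=0$), which is exactly the claim.

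The only nontrivial ingredient is the completeness of $H^1(B_1,\mu_a)$, and this is the main conceptual obstacle. The a priori concern is that the singular set $\{x_n=0\}$ of the weight could prevent distributional differentiation from being a closed operator on $L^2(B_1,\mu_a)$; the hypothesis $a\in(-1,1)$ is precisely what rules this out, since then $|x_n|^a$ is Muckenhoupt-$A_2$ and \cite{HKM} guarantees the closedness, hence the Hilbert structure, reflexivity and separability of $H^1(B_1,\mu_a)$. Once this structural fact is granted, everything else is soft functional-analytic bookkeeping.
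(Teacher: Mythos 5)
The paper does not prove this statement at all: it is quoted verbatim as \cite[Theorem~1.31]{HKM}, so there is no internal argument to compare against. Your proof is the standard one and is correct. The decisive point — that $H^1(B_1,\mu_a)$ is a complete (hence reflexive, separable) Hilbert space, equivalently that the pairs $(v,\nabla v)$ form a \emph{closed} subspace of $L^2(B_1,\mu_a)\oplus L^2(B_1,\mu_a;\R^n)$ — is exactly where the hypothesis $a\in(-1,1)$ enters, via the $A_2$ Muckenhoupt property of $|x_n|^a$ and the resulting uniqueness/closedness of the weighted gradient (\cite[Section~1.9]{HKM}, which the paper itself invokes later when it describes $H^1(B_1,\mu_a)$ as the completion of smooth functions). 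You identify and attribute this correctly, and the remaining step (weak $H^1$ convergence implies the two stated weak $L^2$ convergences, by testing against $(f,0)$ and $(0,g)$) is immediate; note that closedness in norm gives weak closedness since the subspace is convex, so the limit pair is indeed of the form $(v,\nabla v)$. Two minor remarks: separability is not actually needed, since sequential weak compactness of bounded sets holds in any reflexive space (or any Hilbert space); and the whole argument is soft enough that nothing else requires checking.
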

Moreover in view of \cite[Theorem~8.1]{HK}, where Heinonen and Koskela obtained an analogous of the Rellich Theorem on Sobolev metric spaces, 
we can deduce that every bounded and closed set in $H^1(B_1,\mu_a)$ is relatively compact in $L^2(B_1,\mu_a)$.
\begin{teo}[Rellich Theorem {\cite[Theorem~8.1]{HK}}]\label{T:8.1 HK}
Let $v_j$ be a bounded sequence in $H^1(B_1,\mu_a)$. Then there exists a subsequence $v_{j_i}$ and a function 
 $v\in H^1(B_1,\mu_a)$ such that $v_{j_i}\to v$ in $L^2(B_1,\mu_a)$
\end{teo}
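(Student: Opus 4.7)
The plan is to combine the weak compactness already provided by Theorem~\ref{T:1.31 HKM} with a decomposition that separates a ``thick'' region where the weight $|x_n|^a$ is non-degenerate from a ``thin'' region near the hyperplane $\{x_n=0\}$. First I would apply Theorem~\ref{T:1.31 HKM} to extract a subsequence, still labelled $\{v_{j_i}\}$, with $v_{j_i}\rightharpoonup v$ in $L^2(B_1,\mu_a)$ and $\nabla v_{j_i}\rightharpoonup \nabla v$ in $L^2(B_1,\mu_a;\R^n)$. The task is then to promote the first weak convergence to strong $L^2(\mu_a)$ convergence.

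For $\delta>0$ set $A_\delta:=B_1\cap\{|x_n|>\delta\}$ and $C_\delta:=B_1\setminus A_\delta$. On $A_\delta$ the weight $|x_n|^a$ is bounded from above and below by positive constants depending on $\delta$, hence the unweighted and weighted $H^1$ norms are equivalent there. Consequently $\{v_{j_i}\}$ is bounded in the classical $H^1(A_\delta)$, and the standard Rellich--Kondrachov theorem on this open set with Lipschitz boundary yields $v_{j_i}\to v$ strongly in $L^2(A_\delta)$ and \emph{a fortiori} in $L^2(A_\delta,\mu_a)$.

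The remaining and genuinely delicate step is the estimate on $C_\delta$, where $\mu_a(C_\delta)\to 0$ as $\delta\to 0$. The main obstacle is that mere boundedness in $L^2(\mu_a)$ does not imply that the integrals $\int_{C_\delta}|v_{j_i}-v|^2\,d\mu_a$ are uniformly small, so one needs a uniform $\mu_a$-integrability (an equi-absolute-continuity) of the sequence. I would derive this from the weighted Sobolev--Poincar\'e inequality: since $a\in(-1,1)$ the weight $|x_n|^a$ belongs to the Muckenhoupt $A_2$ class, and the Fabes--Kenig--Serapioni theory gives a continuous embedding $H^1(B_1,\mu_a)\hookrightarrow L^p(B_1,\mu_a)$ for some $p>2$. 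By H\"older's inequality,
\[
\int_{C_\delta}|v_{j_i}-v|^2\,d\mu_a \leq \mu_a(C_\delta)^{1-2/p}\,\|v_{j_i}-v\|_{L^p(\mu_a)}^{2},
\]
and the right-hand side is bounded by a constant (depending only on the uniform $H^1(\mu_a)$ bound) times $\mu_a(C_\delta)^{1-2/p}\to 0$, uniformly in $i$. Combining the two regional estimates and letting first $i\to\infty$ and then $\delta\to 0$ completes the proof. An equivalent route, matching the reference cited in the statement, is to invoke the abstract Rellich theorem of Heinonen--Koskela directly on the metric measure space $(B_1,|\cdot|,\mu_a)$, whose doubling and $(1,2)$-Poincar\'e hypotheses again follow from $|x_n|^a$ being an $A_2$ weight.
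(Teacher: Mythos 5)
Your argument is correct, but it is genuinely different from what the paper does: the paper offers no proof at all for this statement, simply invoking the abstract Rellich-type compactness theorem of Hajlasz--Koskela on metric measure spaces (your ``equivalent route'' at the end is precisely the paper's route, and there too the doubling and $(1,2)$-Poincar\'e hypotheses for $\mu_a$ reduce to $|x_n|^a$ being an $A_2$ weight for $a\in(-1,1)$). Your decomposition into $A_\delta$ and $C_\delta$ is a more concrete and essentially self-contained alternative: classical Rellich--Kondrachov on the non-degenerate region, plus equi-integrability on the thin slab via the Fabes--Kenig--Serapioni embedding $H^1(B_1,\mu_a)\hookrightarrow L^p(B_1,\mu_a)$ for some $p>2$ and H\"older with exponents $p/2$ and $p/(p-2)$, which correctly produces the factor $\mu_a(C_\delta)^{1-2/p}\to 0$ (finiteness of $\mu_a(C_\delta)$ and its vanishing use $a>-1$). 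What your route buys is transparency about why the degeneracy of the weight on $\{x_n=0\}$ costs no compactness; what the paper's route buys is brevity and generality. One small point to tighten: each application of Rellich--Kondrachov on $A_{1/k}$ a priori requires passing to a further subsequence, so you should make the standard diagonal extraction over $k$ explicit in order to obtain a single subsequence that converges strongly on every $A_{1/k}$ before combining with the uniform estimate on $C_{1/k}$; you should also note (as you implicitly do) that the strong limit on each $A_\delta$ coincides with the weak limit $v$, so the limits patch together.
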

Futhermore, we indicate two Theorems of compact Trace embedding. We are interested in the \emph{trace} of functions in $H^1(B_1,\mu_a)$
on $L^2(B'_1)$ and $L^2(\partial B_1,|x_n|^a\,\mathcal{H}^{n-1})$.
\begin{teo}[Trace Theorem {\cite[Theorem 3.4]{Foc09}}]\label{T:tracciaFoc09}
 For all $a\in (-1, 1)$ there exists a compact operator $\tr: H^1(B^+_1,\mu_a)\to L^2(B'_1)$ 
 such that $\tr(u)=u$ for every $u\in C^\infty(\overline{B^+_1})$
\end{teo}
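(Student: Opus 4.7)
The plan is to construct $\tr$ in three stages: establish a weighted trace inequality on smooth functions; extend by density to $H^1(B^+_1,\mu_a)$; upgrade continuity to compactness via a slab refinement of the same inequality combined with the Rellich Theorem~\ref{T:8.1 HK}.

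For $u\in C^\infty(\overline{B^+_1})$ and a fixed $\wx\in B'_1$, I would start from the fundamental theorem of calculus
\begin{equation*}
 u(\wx,0)=u(\wx,x_n)-\int_0^{x_n}\partial_n u(\wx,t)\,dt,
\end{equation*}
square, and bound the remainder by Cauchy--Schwarz using the splitting $1=t^{-a/2}\cdot t^{a/2}$. The auxiliary weight $\int_0^{x_n}t^{-a}\,dt=\frac{x_n^{1-a}}{1-a}$ is finite precisely because $a<1$. Averaging the resulting pointwise bound in $x_n\in(0,\delta)$ against $x_n^a$ — which integrates to a finite constant because $a>-1$ — yields
\begin{equation*}
 |u(\wx,0)|^2\leq C_a\Big(\delta^{-(1+a)}\int_0^\delta |u(\wx,x_n)|^2 x_n^a\,dx_n+\delta^{1-a}\int_0^\delta|\partial_n u(\wx,t)|^2 t^a\,dt\Big).
\end{equation*}
Taking $\delta=1$ and integrating in $\wx\in B'_1$ gives $\|\tr(u)\|_{L^2(B'_1)}\leq C_a\|u\|_{H^1(B^+_1,\mu_a)}$. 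Density of $C^\infty(\overline{B^+_1})$ in $H^1(B^+_1,\mu_a)$ — a standard property of the Muckenhoupt $A_2$ weight $|x_n|^a$ for $a\in(-1,1)$ — then lets me extend $\tr$ uniquely to a bounded linear operator on the whole space.

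For compactness I take a bounded sequence $u_j$ in $H^1(B^+_1,\mu_a)$; by Theorem~\ref{T:1.31 HKM} and Theorem~\ref{T:8.1 HK} I may pass to a subsequence with $u_j\rightharpoonup u$ in $H^1$ and $u_j\to u$ in $L^2(B^+_1,\mu_a)$. Setting $v_j:=u_j-u$ and integrating the slab inequality over $\wx\in B'_1$ produces
\begin{equation*}
 \|\tr(v_j)\|_{L^2(B'_1)}^2\leq C_a\Big(\delta^{-(1+a)}\int_{B'_1\times(0,\delta)}|v_j|^2\,d\mu_a+\delta^{1-a}\int_{B^+_1}|\partial_n v_j|^2\,d\mu_a\Big).
\end{equation*}
The second term is bounded by $C\delta^{1-a}$ uniformly in $j$ because $v_j$ is bounded in $H^1$, and for each fixed $\delta$ the first term tends to $0$ by strong $L^2$ convergence. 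Letting $j\to\infty$ first and $\delta\to 0^+$ second, the positivity of both exponents $1+a$ and $1-a$ yields $\tr(v_j)\to 0$ in $L^2(B'_1)$, proving compactness.

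The main obstacle is less the elementary computation above than the two weight-theoretic inputs on which it rests: the density of smooth functions in $H^1(B^+_1,\mu_a)$ and the Rellich embedding of Theorem~\ref{T:8.1 HK}. Both hinge on $|x_n|^a$ being an $A_2$ Muckenhoupt weight, which is exactly what the assumption $a\in(-1,1)$ encodes; likewise the exponents $\delta^{-(1+a)}$ and $\delta^{1-a}$ in the slab estimate would both fail to cooperate outside this range, so the stated parameter interval is the natural one for the argument.
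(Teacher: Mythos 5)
The paper does not prove this statement: it is quoted verbatim from \cite[Theorem 3.4]{Foc09}, so there is no internal proof to compare against. Judged on its own, your strategy is the standard one for weighted traces (a one--dimensional fundamental-theorem-of-calculus estimate with the Cauchy--Schwarz splitting $1=t^{-a/2}t^{a/2}$, giving the two exponents $\delta^{-(1+a)}$ and $\delta^{1-a}$, followed by a Rellich-based upgrade from boundedness to compactness), and the compactness step in particular is correct: for fixed $\delta$ the near-boundary term vanishes by strong $L^2(B^+_1,\mu_a)$ convergence, the gradient term is $O(\delta^{1-a})$ uniformly, and $1-a>0$ closes the argument.

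There is, however, one genuine gap in the boundedness step: you apply the slab inequality with $\delta=1$ and ``integrate in $\wx\in B'_1$'' as if the domain were the cylinder $B'_1\times(0,1)$, but it is the half-ball $B^+_1$. For $\wx$ with $|\wx|$ close to $1$ the vertical segment $\{(\wx,t):0<t<\delta\}$ leaves $B^+_1$ as soon as $t\geq\sqrt{1-|\wx|^2}$, so the right-hand side of your estimate involves values of $u$ outside its domain; if instead you shrink $\delta$ to $\delta(\wx)=\sqrt{1-|\wx|^2}$, the factor $\delta(\wx)^{-(1+a)}$ blows up as $|\wx|\to1$ and the integration over $B'_1$ is no longer controlled by $\|u\|_{H^1(B^+_1,\mu_a)}$. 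The estimate on the full disc $B'_1$ is still true, but near the edge $\partial B_1\cap\{x_n=0\}$ you must integrate along a uniform cone of inward-pointing (non-vertical) directions, or transfer the problem to a cylinder by a bi-Lipschitz map whose last coordinate is comparable to $x_n$ (so that the weight $|x_n|^a$ and hence the $H^1(\cdot,\mu_a)$ norm are preserved up to constants). The same correction must then be carried through the compactness argument, where your splitting into a thin collar plus a gradient term survives unchanged. The remaining ingredients you invoke --- density of $C^\infty(\overline{B^+_1})$ in $H^1(B^+_1,\mu_a)$ and the compact embedding of Theorem~\ref{T:8.1 HK} --- are indeed standard consequences of $|x_n|^a$ being an $A_2$ weight for $a\in(-1,1)$ and are legitimately cited rather than reproved.
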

The Theorem of Trace embedding on $L^2(\partial B_1,|x_n|^a\,\mathcal{H}^{n-1})$
is similar to  
the Theorem of Trace embedding in the classical Sobolev spaces,
for its proof we refer to \cite[Section 3.7]{Gi03}. 
\begin{teo}[Trace Theorem]\label{T:traccia partialB1}
 For all $a\in (-1, 1)$ there exists a compact operator $\tr: H^1(B_1,\mu_a)\to L^2(\partial B_1,|x_n|^a\,\mathcal{H}^{n-1})$ 
 such that $\tr(u)=u$ for every $u\in C^\infty(\overline{B_1})$
\end{teo}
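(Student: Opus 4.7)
The plan is to establish a quantitative trace inequality for smooth functions, extend it by density, and then deduce compactness directly by combining this inequality with the Rellich theorem already stated in the paper (Theorem~\ref{T:8.1 HK}).

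First I pass to polar coordinates $x = t\xi$ with $\xi \in \partial B_1$ and, for $u \in C^\infty(\overline{B_1})$, set
\[
g(t) := \int_{\partial B_1} u(t\xi)^2\,|\xi_n|^a\, d\mathcal{H}^{n-1}(\xi).
\]
By the chain rule and Cauchy--Schwarz,
\[
|g'(t)| \leq 2 \Bigl(\int_{\partial B_1} u(t\xi)^2 |\xi_n|^a\, d\mathcal{H}^{n-1}\Bigr)^{1/2} \Bigl(\int_{\partial B_1} |\nabla u(t\xi)|^2 |\xi_n|^a\, d\mathcal{H}^{n-1}\Bigr)^{1/2}.
\]
Fix $\delta \in (0,1/2)$. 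Integrating the identity $g(1) = g(r) + \int_r^1 g'(t)\, dt$ over $r \in [1-\delta,1]$, applying Cauchy--Schwarz in $t$ to the second term, and then converting back to weighted Lebesgue integrals using $d\mu_a(x) = t^{n-1+a}|\xi_n|^a\, dt\, d\mathcal{H}^{n-1}(\xi)$ (where $t^{n-1+a}$ is bounded above and below on the shell since $a > -1$), one obtains
\[
\int_{\partial B_1} u^2\,|x_n|^a\, d\mathcal{H}^{n-1} \leq \frac{C}{\delta}\|u\|_{L^2(B_1,\mu_a)}^2 + C\,\|u\|_{L^2(B_1,\mu_a)}\,\|\nabla u\|_{L^2(B_1,\mu_a)}.
\]
Choosing $\delta = 1/2$ gives $\|\tr(u)\|_{L^2(\partial B_1,\,|x_n|^a \mathcal{H}^{n-1})} \leq C\,\|u\|_{H^1(B_1,\mu_a)}$. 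Since $|x_n|^a$ is an $A_2$ Muckenhoupt weight for $a \in (-1,1)$, $C^\infty(\overline{B_1})$ is dense in $H^1(B_1,\mu_a)$, so $\tr$ extends uniquely to a bounded linear operator on $H^1(B_1,\mu_a)$ with values in $L^2(\partial B_1,\,|x_n|^a \mathcal{H}^{n-1})$.

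For compactness, let $u_k$ be bounded in $H^1(B_1,\mu_a)$. By Theorem~\ref{T:1.31 HKM} I extract a subsequence with $u_k \rightharpoonup u$ in $H^1(B_1,\mu_a)$, and Theorem~\ref{T:8.1 HK} upgrades this to $u_k \to u$ strongly in $L^2(B_1,\mu_a)$. Applying the refined inequality above to $w_k := u_k - u$ with any fixed $\delta$ yields
\[
\|\tr(w_k)\|_{L^2(\partial B_1,\,|x_n|^a)}^2 \leq \frac{C}{\delta}\|w_k\|_{L^2(B_1,\mu_a)}^2 + C\,\|w_k\|_{L^2(B_1,\mu_a)}\,\|\nabla w_k\|_{L^2(B_1,\mu_a)},
\]
whose right-hand side tends to $0$ because $\|w_k\|_{L^2(\mu_a)} \to 0$ while $\|\nabla w_k\|_{L^2(\mu_a)}$ stays bounded. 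Hence $\tr(u_k) \to \tr(u)$ in $L^2(\partial B_1,\,|x_n|^a \mathcal{H}^{n-1})$, which is compactness.

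The main technical point is the first step: one must carefully order the two Cauchy--Schwarz inequalities (angular first, radial second) and exploit that the Jacobian factor $t^{n-1+a}$ is bounded away from $0$ and $\infty$ on $[1-\delta,1]$ precisely because $a \in (-1,1)$, in order to convert the polar integrals to weighted Cartesian ones without losing the correct scaling in $\delta$. The density of smooth functions in $H^1(B_1,\mu_a)$ underlying the second step is a standard consequence of the $A_2$-character of $|x_n|^a$ via convolution with mollifiers and cutoff, and the final compactness step is then automatic.
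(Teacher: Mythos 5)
Your proof is correct. Note that the paper itself does not prove this statement: it only remarks that the argument is analogous to the classical trace embedding and defers to Giusti \cite[Section 3.7]{Gi03}, so you are supplying the argument that the paper leaves implicit. Your route is the standard one, correctly adapted to the weight: the polar-coordinate identity $g(1)=g(r)+\int_r^1 g'(t)\,dt$, averaged over $r\in[1-\delta,1]$ with the two Cauchy--Schwarz steps in the order you indicate, gives the interpolation-type bound $\|\tr u\|^2_{L^2(\partial B_1,|x_n|^a\mathcal{H}^{n-1})}\leq \tfrac{C}{\delta}\|u\|^2_{L^2(\mu_a)}+C\|u\|_{L^2(\mu_a)}\|\nabla u\|_{L^2(\mu_a)}$, and this, combined with the density of smooth functions (which the paper itself assumes via \cite[Section 1.9 and Lemma 1.15]{HKM}) and the weighted Rellich theorem (Theorem~\ref{T:8.1 HK}), yields both boundedness and compactness; the final step correctly uses that the gradient term is only multiplied by the vanishing factor $\|w_k\|_{L^2(\mu_a)}$. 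One small inaccuracy worth fixing: the boundedness of the Jacobian factor $t^{n-1+a}$ on $[1-\delta,1]$ has nothing to do with $a\in(-1,1)$ (it holds for any exponent since $t$ is bounded away from $0$); the restriction $a>-1$ is what makes $|x_n|^a$ locally integrable, hence $\mu_a$ and the surface measure $|x_n|^a\mathcal{H}^{n-1}\llcorner\partial B_1$ finite, and what places $|x_n|^a$ in the Muckenhoupt class $A_2$ used for the density of smooth functions. This is a cosmetic misattribution, not a gap.
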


\section{Frequency formula}\label{s:3frequency}
Let $x_0\in \Gamma(\uu)$ and $r\in (0,1-|x_0|)$; let $N^{x_0}(r,\uu)$ be the frequency function defined by
\begin{equation}\label{d:frequency}
 N_a^{x_0}(r,\uu):=\frac{r\int_{B_r(x_0)}|\nabla \uu|^2\,d\mu_a}{\int_{\partial B_r(x_0)} \uu^2\,|x_n|^a\,d\mathcal{H}^{n-1}}
\end{equation}
if $\uu|_{\partial B_r(x_0)}\not\equiv 0$. We recall the monotonicity result due to Caffarelli and Silvestre \cite{CS}. 
\begin{teo}\label{frequency teo}
 \begin{itemize}
  \item[(i)] The frequency function $N_a^{x_0}(r,\uu)$ is monotone nondecreasing in the variable $r$ for all $r\in (0,1-|x_0|)$.
  \item[(ii)] For all points $x_0\in \Gamma(\uu)$ the function $N^{x_0}(r,\uu)= \l$ for all $r\in (0,1-|x_0|)$ 
  if and only if $\uu(x_0+\cdot)$ is $\l$-homogeneous. 
  \item[(iii)] If $\uu(x_0+\cdot)$ is $\l$-homogeneous then $\l\geq 1+s$.
  \item[(iv)] $N_a^{x_0}(r,\uu)\geq 1+s$ for all $x_0\in \Gamma_u$ and $r\in (0,1-|x_0|)$.
 \end{itemize} 
 \end{teo}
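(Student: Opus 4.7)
The plan is to carry out the classical Almgren monotonicity argument in the weighted setting, relying on the divergence theorem (Theorem~\ref{t:divergence theorem fractional}) and a weighted Rellich--Pohozaev identity. For notational convenience take $x_0 = \underline{0}$ and set
\begin{equation*}
H(r) := \int_{\partial B_r} \uu^2\, |x_n|^a \, d\mathcal{H}^{n-1}, \qquad D(r) := \int_{B_r} |\nabla \uu|^2 \, d\mu_a,
\end{equation*}
so that $N_a^{\underline{0}}(r,\uu) = rD(r)/H(r)$. Applying \eqref{e:divergence theorem fractional} with $\phi = \psi = \uu$ yields
\begin{equation*}
D(r) = \int_{\partial B_r} \uu \, \partial_\nu \uu \, |x_n|^a \, d\mathcal{H}^{n-1},
\end{equation*}
since the Signorini complementarity built into \eqref{POF'} forces $\uu\,\RR(\uu)\equiv 0$ on $B'_r$.

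For (i), a spherical change of variables (using $|x_n|^a = r^a |\omega_n|^a$ on $\partial B_r$) gives $H'(r) = \frac{n-1+a}{r}H(r) + 2D(r)$, while testing $L_a \uu = 0$ against the vector field $x\cdot\nabla \uu$ and integrating by parts produces the weighted Rellich identity
\begin{equation*}
rD'(r) = (n+a-2)\,D(r) + 2r\int_{\partial B_r} |x_n|^a (\partial_\nu \uu)^2\, d\mathcal{H}^{n-1};
\end{equation*}
the trace contribution on $B'_r$ disappears because on the contact set $\wx \cdot \nabla_{\wx} \uu$ vanishes, while off it $\RR(\uu) = 0$. Combining these two identities the dimensional terms cancel and one finds
\begin{equation*}
\bigl(\log N_a^{\underline{0}}\bigr)'(r) = \frac{2}{D(r)}\int_{\partial B_r} |x_n|^a (\partial_\nu \uu)^2\, d\mathcal{H}^{n-1} - \frac{2D(r)}{H(r)},
\end{equation*}
which is non-negative by Cauchy--Schwarz applied to the boundary representation of $D(r)$.

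For (ii), the direct implication is a one-line scaling: $\lambda$-homogeneity gives $\partial_\nu \uu = \lambda \uu/r$ on $\partial B_r$, hence $D(r) = \lambda H(r)/r$ and $N_a^{x_0}(r,\uu)\equiv \lambda$. Conversely, constancy of $N_a^{x_0}$ saturates Cauchy--Schwarz in the previous display, forcing $\partial_\nu \uu = (\lambda/r) \uu$ pointwise on each sphere; integrating along rays yields $\lambda$-homogeneity. For (iii), the optimal regularity from Theorem~\ref{existence regularity uu} gives $\nabla_{\wx} \uu \in C^s$ up to $B'_{1/2}$, and at $x_0\in\Gamma(\uu)$ the contact condition together with the sign constraint forces $\nabla_{\wx}\uu(x_0) = 0$; a $(\lambda-1)$-homogeneous $C^s$ function vanishing at the origin must satisfy $\lambda \geq 1+s$. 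The borderline cases $\lambda < 1$ are ruled out by the Lipschitz regularity of $\uu$, and $\lambda = 1$ is excluded because even symmetry in $x_n$ and $\uu \geq 0$ on $B'_1$ would force $\uu\equiv 0$, contradicting $x_0 \in \Gamma(\uu)$. Statement (iv) is then a corollary: along any sequence $r_k \to 0^+$ the rescalings admit a subsequential blow-up $u_0$, and the limiting constant value of the frequency together with (ii) makes $u_0$ homogeneous of degree $N_a^{x_0}(0^+,\uu)$; (iii) forces $N_a^{x_0}(0^+,\uu)\geq 1+s$, and monotonicity from (i) propagates the bound to every $r \in (0, 1-|x_0|)$.

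The main obstacle will be justifying the weighted Rellich identity through $\{x_n=0\}$: the vector field $x\cdot \nabla \uu$ is not a priori an admissible $H^1(\mu_a)$ test function, so one must either work on the slab $\{|x_n|>\eps\}$ and pass to the limit using the fine regularity supplied by Theorem~\ref{existence regularity uu}, or test against a suitable regularization; in either case the trace contribution on $B'_r$ must be eliminated by carefully exploiting the complementarity conditions in \eqref{POF'}.
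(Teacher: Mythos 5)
The paper's own proof of this theorem is very thin: for (i), (ii), and (iii) it simply cites Caffarelli--Silvestre \cite[Theorem~6.1]{CS} and Caffarelli--Salsa--Silvestre \cite[Proposition~5.1]{CSS}, and only (iv) is actually argued, in Remark~\ref{r:proof iv teo frequency}, by taking a blow-up limit $w_0$, observing $N_a(\cdot,w_0)\equiv N_a^{x_0}(0^+,u)$, invoking (ii) to get homogeneity of $w_0$, and then applying (iii) and monotonicity. Your sketch of (iv) is exactly that argument, and your sketch of (i)--(ii) is the standard Almgren machinery (Rellich--Pohozaev, $H'$ and $D'$ formulas, Cauchy--Schwarz and its equality case) that the cited references use and that the paper itself records later as Proposition~\ref{Rellich formula} and Lemma~\ref{l:H' e D'}. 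So in substance you are following the same route; the only "difference" is that you fill in proofs the paper outsources.

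Two points worth tightening in (iii). First, the claim \emph{``a $(\lambda-1)$-homogeneous $C^s$ function vanishing at the origin must satisfy $\lambda\ge 1+s$''} is only valid for a function that is not identically zero; you should observe that $\nabla_{\wx}u\equiv 0$ would force $u=u(x_n)$, and the only $\lambda$-homogeneous even solutions of $L_a u=0$ off $\{x_n=0\}$ of that form are multiples of $|x_n|^{2s}$, which to satisfy $L_a u\le 0$ in distribution must be nonpositive and hence have empty free boundary -- contradicting $x_0\in\Gamma(u)$. Second, your handling of $\lambda=1$ is not quite right as written: even symmetry in $x_n$ and $u\ge 0$ on $B_1'$ do not by themselves force $u\equiv 0$ (the function $c|x_n|$ with $c>0$ has both properties); what actually rules out $\lambda=1$ is again the $C^s$ bound on $\nabla_{\wx}u$ together with the ODE/sign analysis in the $x_n$-direction. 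Neither issue changes the conclusion, but as stated those two sentences would not survive a careful reading.
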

\begin{proof}
 As far as the proof of (i), (ii) and (iii) is concerned, we refer to \cite[Theorem 6.1]{CS} and \cite[Proposition 5.1]{CSS}.
 As regards the proof of (iv), see Remark \ref{r:proof iv teo frequency}.
\end{proof}
Thanks to Theorem~\ref{frequency teo}(i) 
it is possible to define the limit $N_a^{x_0}(0^+,u):=\lim_{r\to 0^+}N_a^{x_0}(r,u)$.
We denote by $\Gamma_{1+s}(u)$ the subset of points of free-boundary with frequency $1+s$: 
\begin{equation}\label{d:Gamma 1+s}
\Gamma_{1+s}(u):=\{x_0\in \Gamma_u \,\,:\,\,N_a^{x_0}(0^+,u)=1+s\}.
\end{equation}
Note that from the monotonicity of the frequency and by the upper semicontinuity
of the function $x\mapsto N_a^{x}(0^+,u)$\footnote{The function $N_a^{\cdot}(0^+,u)$ is the infimum on $r$ of continuous 
functions $N_a^{x}(r,u)$.}
the set $\Gamma_{1+s}\subset \Gamma_u$ is open in the relative topology.

We introduce the notation:
\begin{equation*}
 D_a^{x_0}(r)=\int_{B_r(x_0)}|\nabla \uu|^2\,d\mu_a\qquad\qquad H_a^{x_0}(r)=\int_{\partial B_r(x_0)}\uu^2\,|x_n|^a\,d\mathcal{H}^{n-1}
\end{equation*}
and we can omit to write the point $x_0$ if $x_0=\underline{0}$.

All functions $H^{x_0}_a(\cdot),D^{x_0}_a(\cdot)$ and $N^{x_0}_a(\cdot)$ are absolutely continuous functions of the radius, 
so they are differentiable a.e.

We prove two properties of $H_a^{x_0}(r)$ (see \cite[Lemma 2]{ACF}, \cite[A.2.Lemma]{FS} for the case $a=0$).
\begin{lem}\label{l:H e N}
 \begin{itemize}
  \item[(i)] The function 
  \begin{equation}
   (0,1-|x_0|)\ni r \mapsto \frac{H_a^{x_0}(r)}{r^{n+2}}
  \end{equation}
 is nondecreasing and in particular
 \begin{equation}\label{e:H(r)<}
  H_a^{x_0}(r)\leq \frac{H^{x_0}_a(1-|x_0|)}{(1-|x_0|)^{n+2}}r^{n+2} \qquad\qquad \textrm{for all}\quad 0<r<1-|x_0|.
 \end{equation}
 \item[(ii)] Let $x_0\in \Gamma_{1+s}$. For all $\varepsilon>0$ there exists an $r_0(\varepsilon)$ such that
 \begin{equation}\label{e:H(r)>r eps}
  H_a^{x_0}(r)\geq \frac{H_a^{x_0}(r_0)}{r_0^{n+2+\varepsilon}}\,r^{n+2+\varepsilon} \qquad \qquad \textrm{for all}\quad 0<r<r_0.
 \end{equation}
 \end{itemize}
\end{lem}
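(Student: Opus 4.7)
The plan is to reduce both statements to monotonicity of quotients $r\mapsto H_a^{x_0}(r)/r^{\alpha}$ for suitable exponents $\alpha$, controlling the sign of the derivative via the frequency bounds in Theorem~\ref{frequency teo}. Since $x_0\in\Gamma(\uu)\subset B'_1$ has vanishing $n$-th coordinate, the change of variables $y=(x-x_0)/r$ yields
\begin{equation*}
H_a^{x_0}(r)=r^{n-1+a}\int_{\partial B_1}\uu(x_0+ry)^2\,|y_n|^a\,d\mathcal{H}^{n-1}(y).
\end{equation*}
Differentiating in $r$ and reverting the change of variables, the boundary term becomes $\int_{\partial B_r(x_0)}\uu\,\nabla\uu\cdot\nu\,|x_n|^a\,d\mathcal{H}^{n-1}$, which by Theorem~\ref{t:divergence theorem fractional} with $\psi=\phi=\uu$ on $B_r(x_0)$ equals $D_a^{x_0}(r)$: indeed the thin-space correction $\int_{B'_r(x_0)}\uu\,\RR(\uu)\,d\mathcal{H}^{n-1}$ vanishes because the complementarity relations in \eqref{POF'} force $\uu\cdot\RR(\uu)\equiv 0$ a.e.\ on $B'_r(x_0)$. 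This produces the key ODE-type identity
\begin{equation*}
(H_a^{x_0})'(r)=\tfrac{n-1+a}{r}H_a^{x_0}(r)+2\,D_a^{x_0}(r).
\end{equation*}

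For (i), using the arithmetic identity $3-a=2(1+s)$, a direct chain-rule computation gives
\begin{equation*}
\left(\tfrac{H_a^{x_0}(r)}{r^{n+2}}\right)'=\tfrac{2H_a^{x_0}(r)}{r^{n+3}}\bigl(N_a^{x_0}(r,\uu)-(1+s)\bigr),
\end{equation*}
which is nonnegative by Theorem~\ref{frequency teo}(iv). Integrating from $r$ to $1-|x_0|$ yields \eqref{e:H(r)<}.

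For (ii), the analogous computation with the shifted exponent $n+2+\eps$ gives
\begin{equation*}
\left(\tfrac{H_a^{x_0}(r)}{r^{n+2+\eps}}\right)'=\tfrac{2H_a^{x_0}(r)}{r^{n+3+\eps}}\left(N_a^{x_0}(r,\uu)-(1+s)-\tfrac{\eps}{2}\right).
\end{equation*}
Since $x_0\in\Gamma_{1+s}(\uu)$ means $N_a^{x_0}(0^+,\uu)=1+s$, and the frequency is monotone nondecreasing by Theorem~\ref{frequency teo}(i), for each $\eps>0$ I can choose $r_0=r_0(\eps)$ so that $N_a^{x_0}(r,\uu)\leq 1+s+\eps/2$ on $(0,r_0)$; on this interval the quotient $H_a^{x_0}(r)/r^{n+2+\eps}$ is nonincreasing, and integration from $r$ to $r_0$ gives \eqref{e:H(r)>r eps}. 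The only mildly technical point of the whole argument is verifying the vanishing of the weighted thin-space boundary term in the divergence identity; this however is immediate from the Signorini-type complementarity encoded in \eqref{POF'}, and the rest is bookkeeping once Theorem~\ref{frequency teo} has been invoked.
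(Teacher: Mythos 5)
Your proof is correct and follows essentially the same route as the paper: both rest on the identity $(H_a^{x_0})'(r)=\tfrac{n-1+a}{r}H_a^{x_0}(r)+2D_a^{x_0}(r)$ obtained from the divergence theorem together with the complementarity $u\,\RR(u)=0$ on $B'_1$ (the paper records this as \eqref{d/dr H/r^n-2s}, and it is the same as Lemma~\ref{l:H' e D'}(i),(iii)), and then on the frequency lower bound $N_a^{x_0}\geq 1+s$. The only cosmetic differences are that the paper phrases part (ii) via the logarithmic derivative of $H_a^{x_0}(r)/r^{n-2s}$ while you differentiate $H_a^{x_0}(r)/r^{n+2+\eps}$ directly, and that you cite Theorem~\ref{frequency teo}(iv) where the paper instead combines Theorem~\ref{frequency teo}(i) with the defining property $N_a^{x_0}(0^+,u)=1+s$ of $\Gamma_{1+s}$; since the proof of item (iv) is deferred to Remark~\ref{r:proof iv teo frequency} and ultimately uses the blow-up machinery that relies on this lemma, the paper's choice avoids a potential circularity, so it is slightly safer to invoke the monotonicity (i) plus the definition of $\Gamma_{1+s}$ as the paper does.
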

\begin{proof}
(i) We proceed along a two-step argument. Let $x_0\in \Gamma_{1+s}(u)$ we recall that $x_0=(\widehat{x_0},0)$.
Thanks to the Divergence Theorem and the third condition of \eqref{POF} for which $u\RR(u)=0$ in $B'_1$ 
we can compute the derivative of $\frac{H_a^{x_0}(r)}{r^{n-2s}}$:
  \begin{equation}\label{d/dr H/r^n-2s}
   \begin{split}
    \frac{d}{dr}\left(\frac{1}{r^{n-2s}}\,H_a^{x_0}(r)\right) = \frac{2}{r^{n-2s}}\,\int_{B_r(x_0)} |\nabla\uu(x)|^2\,d\mu_a.
   \end{split}
  \end{equation}
  Next, throught the equation \eqref{d/dr H/r^n-2s}, we compute the derivative of $\frac{H_a^{x_0}(r)}{r^{n+2}}$
  \begin{equation}\label{e:d/dr H/r^n+2}
   \begin{split}
    &\frac{d}{dr}\left(\frac{H_a^{x_0}(r)}{r^{n+2}}\right)
    =2\, r^{-n-3}\left(r\,\int_{B_r(x_0)} |\nabla\uu(x)|^2\,d\mu_a- (1+s)\,\int_{\partial B_r(x_0)}\uu^2\,|x_n|^a\,d\mathcal{H}^{n-1}\right),   
   \end{split}
  \end{equation}
  then, according to item (i) in Theorem \ref{frequency teo} and recalling that $x_0\in \Gamma_{1+s}(u)$ 
  we can deduce that $r^{-(n+2)}\,H_a^{x_0}(r)$ is nondecreasing.\\
  (ii) Let $r_0=r_0(\varepsilon)$ be a radius such that for all $r<r_0$ it holds $N_a^{x_0}(u)\leq (1+s)+\varepsilon/2$.
  Then, thanks to \eqref{d/dr H/r^n-2s}, we obtain
  \begin{equation*}
   N_a^{x_0}(r,u)=\frac{r}{2}\,\frac{d}{dr}\log\left(\frac{H_a^{x_0}(r)}{r^{n-2s}}\right)\leq (1+s)+\varepsilon/2.
  \end{equation*}
So, dividing to $\frac{r}{2}$ and integrating on $(r,r_0)$ we have
\begin{equation*}
  \begin{split}
    H_a^{x_0}(r)\geq H_a^{x_0}(r_0)\left(\frac{r}{r_0}\right)^{n+2+\varepsilon}. \qedhere
  \end{split}
 \end{equation*}
\end{proof}

We now prove a version 
of the Rellich formula for weighted Sobolev spaces: 
\begin{prop}[Rellich formula]\label{Rellich formula}
 Let $v$ be a solution of \eqref{POF'}. 
 Then it holds that:
 \begin{equation*}
  \int_{\partial B_r} |\nabla v|^2\,|x_n|^a\,d\mathcal{H}^{n-1}=\frac{n-2+a}{r}\int_{B_r}|\nabla v|^2\,d\mu_a + 2\,\int_{\partial B_r} \left(\langle\nabla v,\frac{x}{r}\rangle\right)^2\,|x_n|^a\,d\mathcal{H}^{n-1}.
 \end{equation*}
\end{prop}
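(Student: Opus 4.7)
The plan is to adapt the classical Rellich--Pohozaev multiplier trick to the $\mu_a$-weighted setting, with care at the thin hyperplane $\{x_n=0\}$ where $v$ only solves a variational inequality. By the even symmetry it suffices to prove the identity with $B_r, \partial B_r, |x_n|^a$ replaced by $B_r^+, (\partial B_r)^+, x_n^a$ on one side and to sum with the analogous identity on $B_r^-$. On $B_r^+$ the function $v$ is smooth and satisfies $L_a(v) = \diw(x_n^a\nabla v)=0$ classically, which I will multiply by the radial multiplier $x\cdot \nabla v$ and integrate.

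Applying the divergence theorem on $B_r^+$ (using $\nu = x/r$ on $(\partial B_r)^+$ and $\nu = -e_n$ on $B'_r$) yields
\begin{equation*}
0 \;=\; \frac{1}{r}\int_{(\partial B_r)^+}x_n^a(x\cdot \nabla v)^2\,d\mathcal{H}^{n-1} \;-\; \int_{B'_r}(x\cdot \nabla v)\,\RR(v)\,d\mathcal{H}^{n-1} \;-\; \int_{B_r^+}x_n^a\,\nabla v\cdot\nabla(x\cdot \nabla v)\,dx.
\end{equation*}
The $B'_r$ integral vanishes: on the contact set $\L(v)$ one has $v\equiv 0$, whence $\nabla_{\wx}v\equiv 0$ and, since $x_n=0$ on $B'_r$, $x\cdot \nabla v = \wx\cdot \nabla_{\wx}v = 0$; on $B'_r\setminus \L(v)$ the third line of \eqref{POF} gives $\RR(v)=0$. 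For the volume term I use the pointwise identity $\nabla v\cdot \nabla(x\cdot \nabla v) = |\nabla v|^2 + \tfrac{1}{2}x\cdot \nabla(|\nabla v|^2)$ and integrate the last summand by parts against the weight, exploiting $\diw(x_n^a\,x) = x_n^a\,\diw(x) + x\cdot \nabla(x_n^a) = (n+a)x_n^a$, to obtain
\begin{equation*}
\int_{B_r^+} x_n^a\, x\cdot\nabla(|\nabla v|^2)\,dx \;=\; r\int_{(\partial B_r)^+}x_n^a|\nabla v|^2\,d\mathcal{H}^{n-1}\; -\; (n+a)\int_{B_r^+}x_n^a|\nabla v|^2\,dx.
\end{equation*}
Substituting back and using $(x\cdot \nabla v)^2 = r^2\,(\nabla v\cdot x/r)^2$ on $\partial B_r$, one rearranges algebraically to the half-space version of the claim, and the full identity then follows by symmetrization in $x_n$.

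The main obstacle I anticipate is the justification of these divergence-theorem computations up to the singular hyperplane $\{x_n=0\}$: both in the first integration by parts (where the boundary contribution on $B'_r$ is an improper integral involving $x_n^a$ times $\nabla v$) and in the second (where it involves $x_n^{a+1}|\nabla v|^2$). The standard remedy is to carry out every computation first on the truncated domain $B_r^+\cap\{x_n>\eps\}$, where everything is smooth, and then let $\eps\to 0^+$: the extra boundary pieces on $\{x_n=\eps\}$ carry a factor $\eps^{a+1}$, which tends to $0$ since $a\in(-1,1)$, and the regularity of $v$ furnished by Theorem~\ref{existence regularity uu} (in particular $|x_n|^a\partial_n v\in C^\alpha$ and $\nabla_{\wx}v\in C^s$) provides the integrability needed to pass to the limit. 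Once the limit is justified, Step~2 shows that the Signorini complementarity makes the only non-classical boundary term disappear, reducing the whole proof to the algebraic manipulation above.
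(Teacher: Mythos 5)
Your proposal is correct and is essentially the same argument the paper compresses into a single line: the paper asks the reader to ``develop'' the divergence of the Rellich--Pohozaev vector field $|\nabla v|^2\frac{x}{r}|x_n|^a - 2\langle\nabla v,\frac{x}{r}\rangle\nabla v\,|x_n|^a$, integrate it over $B_r$, and kill the thin-boundary contribution via the Signorini complementarity; your two-stage integration by parts (multiplying $L_a v=0$ by $x\cdot\nabla v$ and then handling $x\cdot\nabla(|\nabla v|^2)$) is the expanded form of exactly that computation, and your truncation at $\{x_n=\eps\}$ supplies the rigor the paper leaves implicit. One small improvement you make over the paper's wording: the paper quotes the complementarity as ``$u\,\RR(u)=0$'', but the term that must actually vanish on $B_r'$ is $(x\cdot\nabla v)\,\RR(v)$, and you correctly justify this by noting $\nabla_{\wx}v=0$ on $\L(v)$ (since $v\geq 0$ attains its minimum there and $\nabla_{\wx}v$ is continuous by Theorem~\ref{existence regularity uu}) and $\RR(v)=0$ off $\L(v)$.
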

\begin{proof}We apply the Divergence Theorem and the third condition of \eqref{POF} for which $u\RR(u)=0$ in $B'_1$ and develop 
\begin{equation*}
 \diw\left(|\nabla v|^2\frac{x}{r}\,|x_n|^a -2\,\langle\nabla v,\frac{x}{r}\rangle \nabla v\,|x_n|^a\right). \qedhere
\end{equation*}
\end{proof}
In view of section~\ref{s:epiperimetric and consequences} we compute the derivative of the volume and boundary energies.
\begin{lem}\label{l:H' e D'} 
The following formulae hold:
 \begin{itemize}
  \item[(i)] $(H^{x_0}_a)'(r)=\frac{n-2s}{r}H_a^{x_0}(r) + 2\int_{\partial B_r(x_0)} u\nabla u \cdot\nu\,|x_n|^a\,d\mathcal{H}^{n-1}$;
  \item[(ii)] $(D^{x_0}_a)'(r)=\frac{n-2+a}{r}D_a^{x_0}(r) + 2\int_{\partial B_r(x_0)} (\nabla u \cdot\nu)^2\,|x_n|^a\,d\mathcal{H}^{n-1}$;
  \item[(iii)] $D^{x_0}_a(r)=\int_{\partial B_r(x_0)} u\nabla u \cdot\nu\,|x_n|^a\,d\mathcal{H}^{n-1}$;
 \end{itemize}
\end{lem}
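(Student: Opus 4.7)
The plan is to prove the three identities in the logical order (iii), (ii), (i). All three reduce to standard manipulations in the weighted setting, and the recurring geometric ingredient is the complementarity condition $u\,\RR(u)=0$ on $B'_1$ that the Euler--Lagrange system~\eqref{POF} imposes on $u$ (either $u(\wx,0)=0$, or else $\RR(u)=0$ at that point).

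For (iii), I would invoke Theorem~\ref{t:divergence theorem fractional} with $\phi=\psi=u$, localized to $B_r(x_0)$. The resulting identity
$$\int_{B_r(x_0)}|\nabla u|^2\,d\mu_a = \int_{\partial B_r(x_0)} u\,\nabla u\cdot\nu\,|x_n|^a\,d\mathcal{H}^{n-1} - 2\int_{B'_r(x_0)} u\,\RR(u)\,d\mathcal{H}^{n-1}$$
collapses to (iii) because the thin-set boundary term vanishes pointwise by complementarity.

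For (ii), the coarea formula gives absolute continuity of $D^{x_0}_a$ with $(D^{x_0}_a)'(r)=\int_{\partial B_r(x_0)}|\nabla u|^2\,|x_n|^a\,d\mathcal{H}^{n-1}$ for a.e.\ $r$, after which the Rellich formula of Proposition~\ref{Rellich formula} (which applies verbatim at the translated center $x_0\in\Gamma\subset\{x_n=0\}$) converts this boundary integral into the right-hand side of (ii). For (i), I would rescale: since $x_0=(\widehat{x_0},0)$, the change of variables $y=(x-x_0)/r$ makes the weight scale as $|x_n|^a=r^a|y_n|^a$, so
$$H^{x_0}_a(r) = r^{n-1+a}\int_{\partial B_1} u(x_0+ry)^2\,|y_n|^a\,d\mathcal{H}^{n-1}(y),$$
and differentiating in $r$ produces the prefactor derivative $\tfrac{n-2s}{r}H^{x_0}_a(r)$ (using $n-1+a=n-2s$) together with an integral that, after reverting the substitution, becomes the claimed second term because the remaining scaling powers cancel exactly thanks to $a=1-2s$.

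The computation contains no genuine obstacle; the only care needed is bookkeeping the weighted exponents and verifying, at the two places it enters, that complementarity kills the thin-interface contribution. The hypothesis that $x_0$ lies on $\{x_n=0\}$ is critical for (i), where it ensures that the weight is positively homogeneous of degree $a$ under dilations centered at $x_0$; without it the scaling identity in (i) would not close as stated.
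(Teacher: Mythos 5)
Your argument is correct and tracks the paper's own proof in all three parts: (iii) follows from the Divergence Theorem for $L_a$-harmonic even functions together with the complementarity $u\,\RR(u)=0$ on $B'_1$; (ii) is coarea plus the Rellich formula of Proposition~\ref{Rellich formula}; and (i) is obtained by pulling $H^{x_0}_a(r)$ back to $\partial B_1$, extracting the power $r^{n-1+a}=r^{n-2s}$ from the weight (legitimately, because $x_0\in\{x_n=0\}$), and differentiating under the integral, which is exactly the content of the paper's one-line hint $\frac{d}{dr}u^2(x_0+ry)=2\,u(x_0+ry)\nabla u(x_0+ry)\cdot y$. The reordering to (iii), (ii), (i) is cosmetic since the items are mutually independent; otherwise the decomposition, the use of complementarity, and the exponent bookkeeping via $a=1-2s$ all coincide with the paper's.
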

\begin{proof}
 (i) We can obtain the thesis observing that  $\frac{d}{dr}u^2(x_0 + ry)=2\,u(x_0 + ry)\nabla u(x_0 + ry)\cdot y$.\\
(ii) From Coarea and Rellich Formulae we obtain 
\begin{equation*}
\begin{split}
 (D^{x_0}_a)'(r)=
 \int_{\partial B_r(x_0)} |\nabla u|^2\,d\mu_a 
 \stackrel{Prop. \ref{Rellich formula}}{=} &\,\,\frac{n-2+a}{r}\,D^{x_0}_a(r) + 2\int_{\partial B_r(x_0)} (\nabla u \cdot\nu)^2\,|x_n|^a\,d\mathcal{H}^{n-1}.
\end{split}
\end{equation*}
(iii) In order to prove the formula, it is enough to apply the the Divergence Theorem and the third condition of \eqref{POF} for which $u\RR(u)=0$ in $B'_1$.
\end{proof}
\section{The blow-up method: existence and $(1+s)$-homogeneity of blow-ups}\label{s:3existence}
In order to study the properties of the free-boundary,  we investigate the properties 
of the blow-up limits. 
We shall consider a suitable sequence of rescaled functions of the solution $\uu$. 
Let $x_0\in \Gamma_{1+s}(u)$, we set
\begin{equation}\label{d:ur 1+s}
 u_{x_0, r}(x):=\frac{u(x_0+rx)}{r^{1+s}},
\end{equation}
if $x_0=\underline{0}$ we denote $u_r(x)$ in the place of $u_{\underline{0}, r}(x)$.
Note that in the choice of the rescaling factor in \eqref{d:ur 1+s} we follow the same approach as in \cite{FS} and \cite{GPPS},
which is different with respect to the previous approach used in \cite{ACS}. 

The first step in the analysis of blow-ups is to prove 
the existence of the limits of the sequence $(u_{x_0, r})_r$ for all $x_0\in \Gamma_{1+s}(u)$. 
In order to prove their existence, we state the equiboundedness 
of $(u_{x_0, r})_r$ with respect to the $H^1(B_1,\mu_a)$-norm. 
\begin{prop}[Existence of blow-ups]
 Let $\uu\in H^1(B_1,\mu_a)$ be the solution of \eqref{POF'} and let $x_0\in \Gamma_{1+s}(u)$. 
Then for every sequence $r_k\downarrow 0$ there exists a subsequence $(r_{k_j})_j\subset (r_k)_k$ such that the rescaled functions 
$(u_{x_0,r_{k_j}})_j$ converge in $L^2(B_{1-|x_0|},\mu_a)$.
\end{prop}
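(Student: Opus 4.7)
The plan is to establish a uniform $H^1(B_1,\mu_a)$ bound for the rescaled family $\{u_{x_0,r}\}_{0<r<1-|x_0|}$ and then apply the weighted Rellich compactness Theorem~\ref{T:8.1 HK} to extract a convergent subsequence in $L^2(B_1,\mu_a)$, which trivially restricts to convergence in $L^2(B_{1-|x_0|},\mu_a)$ since $B_{1-|x_0|}\subset B_1$.

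First I would bound the boundary and Dirichlet energies at small radii. Fix any $r_0\in (0,1-|x_0|)$ with $H_a^{x_0}(r_0)>0$. By Theorem~\ref{frequency teo}(i) the frequency $r\mapsto N_a^{x_0}(r,u)$ is nondecreasing, so $N_a^{x_0}(r,u)\leq \Lambda:=N_a^{x_0}(r_0,u)<\infty$ for all $r\in(0,r_0]$. Lemma~\ref{l:H e N}(i) then gives $H_a^{x_0}(r)\leq C_1\,r^{n+2}$ with $C_1:=H_a^{x_0}(r_0)/r_0^{n+2}$, whence by the very definition \eqref{d:frequency} of the frequency
\begin{equation*}
D_a^{x_0}(r)=\frac{N_a^{x_0}(r,u)}{r}\,H_a^{x_0}(r)\leq \Lambda C_1\,r^{n+1}.
\end{equation*}

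Next I would translate these estimates into a uniform bound on $u_{x_0,r}$ by changing variables $y=x_0+rx$. Since $x_0\in\Gamma_{1+s}(u)\subset B'_1$ has $(x_0)_n=0$, one has $y_n=rx_n$ and consequently $|x_n|^a\,dx=r^{-n-a}|y_n|^a\,dy$. Combining with the coarea identity $\int_{B_r(x_0)}u^2\,d\mu_a=\int_0^r H_a^{x_0}(\rho)\,d\rho$, the bounds of the previous step, and the crucial cancellation $a+2s=1$ coming from the Caffarelli--Silvestre extension, a direct computation yields
\begin{equation*}
\int_{B_1}u_{x_0,r}^2\,d\mu_a=r^{-n-a-2-2s}\int_{B_r(x_0)}u^2\,d\mu_a\leq \frac{C_1}{n+3}\,r^{1-a-2s}=\frac{C_1}{n+3},
\end{equation*}
and similarly $\int_{B_1}|\nabla u_{x_0,r}|^2\,d\mu_a=r^{-n-a-2s}\,D_a^{x_0}(r)\leq \Lambda C_1\,r^{1-a-2s}=\Lambda C_1$. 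Hence $\{u_{x_0,r}\}_{0<r<r_0}$ is bounded in $H^1(B_1,\mu_a)$.

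Given any sequence $r_k\downarrow 0$, all but finitely many terms satisfy $r_k<r_0$, so Theorem~\ref{T:8.1 HK} yields a subsequence $(r_{k_j})_j$ along which $u_{x_0,r_{k_j}}$ converges in $L^2(B_1,\mu_a)$, which gives the claim. The step I expect to require the most care is not a conceptual obstruction but the bookkeeping in the weighted change of variables: one must track simultaneously the factor $r^a$ from the weight and the $r^{1+s}$ normalization of $u_{x_0,r}$, and it is decisive that the exponents cancel exactly by virtue of the relation $a=1-2s$.
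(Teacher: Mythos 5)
Your proof is correct and follows essentially the same approach as the paper: bound the frequency and $H_a^{x_0}(r)/r^{n+2}$ via their monotonicity, deduce a uniform $H^1(B_1,\mu_a)$ bound on the rescaled family, then apply weighted compactness. The only cosmetic differences are that you bound $\|u_{x_0,r}\|_{L^2(B_1,\mu_a)}$ directly by coarea integration rather than via the boundary trace plus Poincaré as in the paper, and you invoke the Rellich Theorem~\ref{T:8.1 HK} (which indeed gives the strong $L^2$ convergence asserted; the paper cites Theorem~\ref{T:1.31 HKM}, which only yields weak convergence).
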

\begin{proof}
Since $x_0\in \Gamma_{1+s}(u)$,
 \begin{equation}\label{e:estimate nabla ur}
 \begin{split}
  \|\nabla u_{x_0,r_k}\|^2_{L^2(B_1,|x_n|^a;\R^n)}=\frac{D_a^{x_0}(r_k)}{r_k^{n+1}}
  \stackrel{\eqref{d:frequency}}{=}\frac{H_a^{x_0}(r_k)}{N_a^{x_0}(r_k)\,r_k^{n+2}}
  \leq \frac{H_a^{x_0}(1-|x_0|)}{(1+s)\,(1-|x_0|)^{n+2}}
 \end{split}
 \end{equation}
 where in the last inequality we used the inequality \eqref{e:H(r)<} and the Theorem \ref{frequency teo}(i).
Due to Lemma \ref{l:H e N}(i), we have 
 \begin{equation*}
  \|u_{x_0,r_k}\|^2_{L^2(\partial B_1,\mu_a)}=\frac{H_a^{x_0}(r_k)}{r_k^{n+2}}
  \stackrel{\eqref{e:H(r)<}}{\leq} \frac{H_a^{x_0}(1-|x_0|)}{(1-|x_0|)^{n+2}}. 
 \end{equation*}
So, according to the 
Poincaré inequality we have 
\begin{equation}\label{sup ur L2 lim}
\begin{split}
 \sup_k \|u_{x_0,r_k}\|_{L^2(B_1,\mu_a)}
 \leq C \left(\sup_k \|u_{x_0,r_k}\|_{L^2(\partial B_1,|x_n|^a\,\mathcal{H}^{n-1})}+\sup_k \|\nabla u_{x_0,r_k}\|_{L^2(B_1,\mu_a;\R^n)}\right)<\infty. 
 \end{split}
\end{equation}
 Therefore, thanks to Theorem~\ref{T:1.31 HKM} 
 for every subsequence  of radii $r_k\searrow 0$, there exists an extracted
 subsequence $r_{k_j}\searrow 0$ such that  $u_{x_0, r_{k_j}} \to u_0$ in $L^2(B_{1-|x_0|},\mu_a)$ as $j\to +\infty$.
\end{proof}

\begin{oss}
So, according to the quantitative estimate \eqref{estimate regularity uu} and inequality \eqref{sup ur L2 lim} 
\begin{equation}\label{u c1+s lim}
\begin{split}
 \sup_k \|u_{x_0,r_k}&\|_{X_{s,\a}(\overline{B_{1/2}})}
 \stackrel{\eqref{estimate regularity uu}}{\leq} \sup_k \|u_{x_0,r_k}\|_{L^2(B_1,\mu_a)}<\infty. 
 \end{split}
\end{equation}
In particular, in view of \eqref{u c1+s lim}
 we can easily deduce that
 \begin{equation}\label{u e grad u limitate 1+s}
  \|u\|_{L^\infty(B_r(x_0))}\leq C r^{1+s},\qquad \|\nabla_{\wx} u\|_{L^\infty(B_r(x_0);\R^n)}\leq C r^{s}
  \quad\mathrm{and}\quad \||x_n|^a\partial_{x_n}u\|_{L^\infty(B_r(x_0);\R^n)}\leq C r^{1-s}.
  \end{equation}
\end{oss}

Similarly to \cite{Weiss}  
we consider an energy ``à la Weiss''  
used in \cite{FS} and \cite{GP09} for fractional Laplacian 
(see \cite{GPPS} for a version in the fractional Laplacian problem with drift and 
\cite{FGS,Geraci16} for a version in the classical obstacle problem with quadratic energies with variable coefficients):
\begin{equation}\label{e:Wiess monotonicity frac}
W^{x_0}_{1+s}(r,u)=\frac{1}{r^n+1}\int_{B_r(x_0)}|\nabla u|^2 \,|x_n|^a\, dx - \frac{1+s}{r^{n+2}}\int_{\partial B_r(x_0)}u^2\,|x_n|^a\, d\mathcal{H}^{n-1}.
\end{equation}
We note that 
\begin{equation*}
 W^{x_0}_{1+s}(r,u)= \frac{H^{x_0}_{a}(r)}{r^{n+2}}(N_a^{x_0}(r,u)-(1+s)),
\end{equation*}
thus if $x_0\in \Gamma_{1+s}(u)$ by \eqref{d:Gamma 1+s} and Lemma \ref{l:H e N} 
(which guarantees the boundedness of $\frac{H^{x_0}_{a}(r)}{r^{n+2}}$)
we have
\begin{equation*}
 \lim_{r\searrow 0} W^{x_0}_{1+s}(r,u)=0
\end{equation*}
and due to Theorem \ref{frequency teo}, we obtain 
\begin{equation*}
 W^{x_0}_{1+s}(r,u)\geq 0.
\end{equation*}
Moreover, the function $W^{x_0}_{1+s}(\cdot,u)$ satisfies a monotonicity formula in the same essence as Weiss' monotonicity 
formula proved in \cite{Weiss}. 
For a similar proof see \cite[Theorem 3.5]{GPPS}. 
\begin{prop}[Weiss' monotonicity formula]\label{p:Weiss 1+s}
Let $x_0\in \Gamma_{1+s}(x_0)$ and $\uu$ be a solution of Problem \eqref{POF}; 
then the function $r\mapsto W^{x_0}_{1+s}(r,u)$ is nondecreasing. 
In particular, the following formula holds:
 \begin{equation*}
  \frac{d}{dr}W^{x_0}_{1+s}(r,u) = \frac{2}{r} \int_{\partial B_1} \left( \nabla u_r\cdot \nu - (1+s)u_r\right)^2\,|x_n|^a\,d\mathcal{H}^{n-1}
 \end{equation*}
\end{prop}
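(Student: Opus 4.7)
The plan is to write $W^{x_0}_{1+s}(r,u) = r^{-(n+1)} D_a^{x_0}(r) - (1+s)\,r^{-(n+2)} H_a^{x_0}(r)$, differentiate using the identities in Lemma~\ref{l:H' e D'}, and collect terms into a perfect square. Concretely, substituting the formulas
\begin{equation*}
(D_a^{x_0})'(r) = \frac{n-2+a}{r} D_a^{x_0}(r) + 2\int_{\partial B_r(x_0)}(\nabla u\cdot\nu)^2\,|x_n|^a\,d\mathcal{H}^{n-1},
\end{equation*}
\begin{equation*}
(H_a^{x_0})'(r) = \frac{n-2s}{r}H_a^{x_0}(r) + 2\int_{\partial B_r(x_0)} u\,\nabla u\cdot\nu\,|x_n|^a\,d\mathcal{H}^{n-1},
\end{equation*}
and $D_a^{x_0}(r) = \int_{\partial B_r(x_0)} u\,\nabla u\cdot\nu\,|x_n|^a\,d\mathcal{H}^{n-1}$, and using the relation $a = 1-2s$ to reconcile the various exponents, the bulk integrals cancel and the mixed term $-\tfrac{4(1+s)}{r^{n+2}}\int_{\partial B_r(x_0)} u\,\nabla u\cdot\nu\,|x_n|^a$ appears, which combined with the remaining $\tfrac{2}{r^{n+1}}\int(\nabla u\cdot\nu)^2\,|x_n|^a$ and $\tfrac{2(1+s)^2}{r^{n+3}}H_a^{x_0}(r)$ reproduces a square.

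The second step is to rewrite this square in the rescaled variables. Since $x_0 \in \Gamma_{1+s}(u) \subset B'_1$ has vanishing last coordinate, the change $y = (x-x_0)/r$ on $\partial B_r(x_0)$ yields $u(x_0+ry) = r^{1+s}u_r(y)$, $\nabla u(x_0+ry)\cdot\nu = r^s\,\nabla u_r(y)\cdot\nu$, $|x_n|^a = r^a|y_n|^a$ and $d\mathcal{H}^{n-1} = r^{n-1}d\mathcal{H}^{n-1}_y$, so that $(\nabla u\cdot\nu) - (1+s)u/r = r^s\bigl(\nabla u_r\cdot\nu - (1+s)u_r\bigr)$ and the overall prefactor collapses to $\tfrac{2}{r}$, giving exactly
\begin{equation*}
\frac{d}{dr}W^{x_0}_{1+s}(r,u) = \frac{2}{r}\int_{\partial B_1}\bigl(\nabla u_r\cdot\nu - (1+s)u_r\bigr)^2\,|x_n|^a\,d\mathcal{H}^{n-1}.
\end{equation*}
Nondecreasingness follows immediately since the integrand is a square.

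The only nontrivial ingredient to justify is the identity $D_a^{x_0}(r) = \int_{\partial B_r(x_0)} u\,\nabla u\cdot\nu\,|x_n|^a\,d\mathcal{H}^{n-1}$, which is Lemma~\ref{l:H' e D'}(iii) and is itself obtained from the Divergence Theorem~\ref{t:divergence theorem fractional} together with the complementarity condition $u\,\RR(u) = 0$ on $B'_1$ furnished by the third line of \eqref{POF}. This is therefore the sole place where the free-boundary/obstacle structure enters; once it is in place, the entire proof is a direct differentiation and algebraic rearrangement, and the expected \emph{obstacle} is purely bookkeeping: keeping track of the exponent $n-2+a$ versus $n-2s$ and verifying that the combination $(1+s)(n+2) - (1+s)(n-2s) = 2(1+s)^2$ produces the correct coefficient for the square, which relies crucially on $2s + a = 1$.
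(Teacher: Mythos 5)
Your proof is correct and follows exactly the route the paper sets up: the paper itself only cites \cite[Theorem 3.5]{GPPS} for this statement, but Lemma~\ref{l:H' e D'} is stated precisely so that $W^{x_0}_{1+s}$ can be differentiated in the way you describe, and your bookkeeping (the coefficient $-4(1+s)$ on the mixed term, the coefficient $2(1+s)^2$ on the $H_a$ term, and the identity $2s+a=1$ that makes the prefactor collapse to $2/r$ after rescaling) all checks out. One small inaccuracy in your closing remark: Lemma~\ref{l:H' e D'}(iii) is not quite the sole place the obstacle structure enters, since item~(ii) is proved via the Rellich formula (Proposition~\ref{Rellich formula}), which also invokes the complementarity condition $u\,\RR(u)=0$ on $B'_1$; but this does not affect the validity of the argument.
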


 Next, we prove the homogeneity property of blow-ups. 
 We prove the result through properties of the frequency function and 
 the optimal regularity of the solution.
 Proceeding as in \cite[Proposition 4.2]{FGS} and thanks to Proposition~\ref{p:Weiss 1+s} 
 it is possible to obtain the same result.
\begin{prop}[$(1+s)$-homogeneity of blow-ups]\label{p:1+s homogeneity}
 Let $\uu\in H^1(B_1, \mu_a)$ be a solution of Problem \eqref{POF'}.
 Let $x_0\in \Gamma_{1+s}(u)$ and $(u_{x_0,r})_r$ be a sequence of rescaled functions. 
 Then, for every sequence $(r_j)_j\downarrow 0$ there exists
 a subsequence $(r_{j_k})_k\subset(r_j)_j$ such that the sequence $(u_{x_0,r_{j_k}})_k$ converges in $C^{1+\a}(\R^n)$ (see \eqref{estimate regularity uu}) 
 for all $\a<s$ to $u_{x_0}$ a $(1+s)$-homogeneous function.
\end{prop}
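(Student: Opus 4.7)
The strategy has two parts: first upgrade the $L^2$-convergence of blow-ups coming from the previous proposition to convergence in $C^{1+\alpha}$ on compacts, and then use Weiss' monotonicity formula to identify the limit as a $(1+s)$-homogeneous function. Throughout, I will use the scaling identity
\begin{equation*}
W^{x_0}_{1+s}(r\rho,u)=W^{\underline{0}}_{1+s}(\rho,u_{x_0,r}),
\end{equation*}
which is immediate from a change of variables in \eqref{e:Wiess monotonicity frac}.

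For the compactness step, I start from the $L^2$-bound on $u_{x_0,r_k}$ established in \eqref{sup ur L2 lim}. The key observation is that each rescaled function $u_{x_0,r}$ solves the very same fractional obstacle problem \eqref{POF'} on an enlarged ball. Hence the quantitative regularity estimate of Theorem~\ref{existence regularity uu}, applied to $u_{x_0,r_k}$ on $B_{1/2}$, together with \eqref{u c1+s lim}, gives a uniform bound on $\|u_{x_0,r_k}\|_{X_{s,\alpha}(\overline{B_{1/2}})}$. By Ascoli--Arzel\`a (and a standard diagonal argument to extend the estimate to any compact subset of $\R^n$ by increasing the initial radius), I can extract a subsequence $(r_{j_k})_k$ along which $u_{x_0,r_{j_k}}$ converges in $C^{1+\alpha}_{\mathrm{loc}}$, in the sense of \eqref{estimate regularity uu}, to some $u_{x_0}$, which is automatically identified with the $L^2$-limit of the previous proposition.

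For the homogeneity step, I use the frequency and Weiss' formula. Since $x_0\in \Gamma_{1+s}(u)$, the identity
\begin{equation*}
W^{x_0}_{1+s}(r,u)=\frac{H^{x_0}_a(r)}{r^{n+2}}\bigl(N_a^{x_0}(r,u)-(1+s)\bigr)
\end{equation*}
combined with Lemma~\ref{l:H e N}(i) and the definition of $\Gamma_{1+s}(u)$ yields $W^{x_0}_{1+s}(0^+,u)=0$, while Proposition~\ref{p:Weiss 1+s} gives monotonicity. For any $0<\rho_1<\rho_2$, integrating the formula in Proposition~\ref{p:Weiss 1+s} and using the scaling identity,
\begin{equation*}
\int_{\rho_1}^{\rho_2}\frac{2}{\rho}\int_{\partial B_1}\bigl(\nabla (u_{x_0,r_{j_k}})_\rho\cdot\nu-(1+s)(u_{x_0,r_{j_k}})_\rho\bigr)^2|x_n|^a\,d\mathcal{H}^{n-1}d\rho = W^{x_0}_{1+s}(r_{j_k}\rho_2,u)-W^{x_0}_{1+s}(r_{j_k}\rho_1,u),
\end{equation*}
and the right-hand side tends to $0$ as $k\to\infty$. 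Passing to the limit via the $C^{1+\alpha}$-convergence (which allows us to move the limit inside the boundary integrals; the weight $|x_n|^a$ is integrable and the trace is continuous by Theorem~\ref{T:traccia partialB1}), we obtain
\begin{equation*}
\nabla u_{x_0}(x)\cdot x=(1+s)\,u_{x_0}(x)
\end{equation*}
for a.e.\ $x\in B_1\setminus\{0\}$, which is exactly Euler's identity for $(1+s)$-homogeneity.

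\textbf{Main obstacle.} The delicate point is justifying the passage to the limit in the boundary integral, because of the degenerate/singular weight $|x_n|^a$ and because $\nabla u_{x_0,r}\cdot\nu$ involves the normal derivative, which is only controlled through $|x_n|^a\partial_n u_{x_0,r}$. The uniform $X_{s,\alpha}$ estimate is precisely what is needed: $\nabla_{\widehat{x}}u_{x_0,r}$ converges in $C^s$, while $|x_n|^a\partial_n u_{x_0,r}$ converges in $C^\alpha$, so $(\nabla u_{x_0,r}\cdot\nu-(1+s)u_{x_0,r})^2|x_n|^a$ converges uniformly to the corresponding limit quantity on $\partial B_\rho$ for a.e.\ $\rho$, and an elementary dominated-convergence argument then gives the integral passage.
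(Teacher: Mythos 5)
Your argument is correct, but it takes the route that the paper explicitly mentions as an alternative and then declines to follow. The paper proves the $(1+s)$-homogeneity by passing to the limit in the \emph{frequency function}: from the monotonicity of $N_a^{x_0}(\cdot,u)$ and the identity $N_a(t,u_{x_0,r_k})=N_a^{x_0}(tr_k,u)$, one pinches $N_a(t,u_{x_0,r_k})$ between $1+s$ and $(1+s)+\delta$ for $r_k$ small, sends $k\to\infty$ using the strong $X_{s,\alpha}$-convergence, lets $\delta\downarrow 0$, and concludes $N_a(t,u_{x_0})\equiv 1+s$; Theorem~\ref{frequency teo}(ii) then gives the homogeneity. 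You instead integrate Proposition~\ref{p:Weiss 1+s} between two dilations and send $k\to\infty$ to deduce Euler's identity $\nabla u_{x_0}\cdot x=(1+s)u_{x_0}$ directly. Both routes require exactly the same compactness ingredient (the uniform $X_{s,\alpha}$ bound coming from \eqref{estimate regularity uu} and \eqref{u c1+s lim}, followed by Ascoli--Arzel\`a), so there is no saving on that side. The frequency approach has the small advantage that one passes to the limit only in $\int_{B_t}|\nabla u_{x_0,r_k}|^2\,d\mu_a$ and $\int_{\partial B_t}u_{x_0,r_k}^2\,|x_n|^a\,d\mathcal{H}^{n-1}$, both of which are manifestly continuous under the convergence one has; your approach requires, as you note, a more careful justification of passing to the limit in $\int_{\partial B_1}(\nabla u_{x_0,r_k\rho}\cdot\nu-(1+s)u_{x_0,r_k\rho})^2|x_n|^a\,d\mathcal{H}^{n-1}$. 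Your remark about the normal derivative is the right one, and is in fact even cleaner than you say: on $\partial B_1$ one has $\nu_n=x_n$, so the worst term is $(\partial_n u)^2 x_n^2|x_n|^a=(|x_n|^a\partial_n u)^2|x_n|^{2-a}$, and since $2-a>1>0$ the extra factor $|x_n|^{2-a}$ is continuous and vanishes on $\{x_n=0\}$; the claim of uniform convergence of that term then follows from the uniform convergence of $|x_n|^a\partial_n u_{x_0,r_k}$, without even invoking dominated convergence (the only term that genuinely needs dominated convergence, when $a<0$, is the purely tangential one, because $|x_n|^a$ is then unbounded). With that caveat spelled out, the proof is complete.
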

\begin{proof}
In view of \eqref{u c1+s lim}, thanks to the Ascoli-Arzelà Theorem there exists a subsequence (that we do not relabel)
$\uu_{x_0,r_k}$ and $u_{x_0}\in X_{s,\a}(\overline{B_{1/2}})$ such that $\|u_{x_0,r_k}-u_{x_0}\|_{X_{\beta,\a}(\overline{B_{1/2}})}$ converge to $0$ 
for all $\beta<s$.
It is easy to prove that $u_{x_0}$ is a solution of Problem \eqref{POF'}.
 In order to conclude the proof, we show that $u_{x_0}$ is $(1+s)$-homogeneous.
 
 We note that for every $\d>0$ we can fix $\rho>0$ such that $N_a^{x_0}(\rho,u)\leq (1+s)+\d$. 
 So for $k>>1$, for every $t\in (0,1)$
 (such that $t\,r_k<\rho$)
 \begin{equation}\label{e:conto hom}
  \begin{split}
   N_a(t,\uu_{x_0,r_k})=&N_a^{x_0}(t,u_{r_k})=N_a^{x_0}(t\,r_k,u)-N_a^{x_0}(\rho,u) +N_a^{x_0}(\rho,u) \leq (1+s) +\d,\\
   &N_a^{x_0}(t,u_{r_k})=N_a^{x_0}(t\,r_k,u)\geq 1+s
  \end{split}
 \end{equation}
where we resort to Theorem \ref{frequency teo}. 
Now, from the convergence of $\uu_{x_0,r_k}$ to $u_{x_0}$ and thanks to the arbitrariness of $\d$, we obtain $N_a(t,u_{x_0})\equiv 1+s$;
then, by Theorem \ref{frequency teo}(ii), $u_{x_0}$ is $(1+s)$-homogeneous. 
\end{proof}
\begin{oss}\label{r:proof iv teo frequency}
 By proceeding in the same way, we can prove Theorem \ref{frequency teo}(iv) as well:
\end{oss}
\begin{proof}[Proof of Theorem \ref{frequency teo}(iv)]
 Let $x_0\in\Gamma(u)$ and $\l=N_a^{x_0}(0^+,u)$. Then, if $r_k\searrow 0$ is a suitable sequence of radii, for all $\d>0$ we can fix $\rho>0$ such that $N_a^{x_0}(\rho,u)\leq \l+\d$. 
 So, proceeding in much the same way as in \eqref{e:conto hom}, we deduce
 \begin{equation*}
  \begin{split}
   \l\leq N_a(t,u_{x_0},r_k) \leq \l + \d,
  \end{split}
 \end{equation*}
 thus, by the strong convergence of $u_{x_0,r_k,}$ to its blow-up $w_0$ and by the arbitrariness of $\d$, we have
 $N_a(t,w_0)\equiv \l.$ So, by the second item of Theorem \ref{frequency teo} $w_0$ is $\l$-homogeneous and by Theorem \ref{frequency teo}(iii) 
 $\l\geq 1+s$.
\end{proof}

\section{Classification of the $(1+s)$-homogeneous global solutions}\label{s:3classification}
Let $h_e$ be the function defined by
\begin{equation}\label{he}
 h_e(x):=\left(s^{-1}\wx\cdot e -\sqrt{(\wx\cdot e)^2+x_n^2}\right)\left(\sqrt{(\wx\cdot e)^2+x_n^2}+\wx\cdot e\right)^s.
\end{equation}
From a simple calculation it is possible to prove the following properties:
\begin{itemize}
 \item[(i)] $h_e(\wx,x_n)=h_e(\wx,-x_n)$;
 \item[(ii)] $h_e(x)\geq 0$ on $\{x_n=0\}$ and $h_e=0$ on $\{x_n=0,\, \wx\cdot e\leq 0\}$;
 \item[(iii)] $\partial_e h_e(x) = \frac{1-s^2}{s}\left(\sqrt{(\wx\cdot e)^2+x_n^2}+\wx\cdot e\right)^s$; 
 \item[(iv)] $\partial_n h_e(x) = -(1+s)x_n\left(\sqrt{(\wx\cdot e)^2+x_n^2}+\wx\cdot e\right)^{s-1}$;
 \item[(v)] $h_e$ is solution of \eqref{e:La+sym};
\item[(vi)]
\begin{equation}\label{lim_eh}
  \RR h_e(\wx)=\left\{  \begin{array}{ll}
			  0 & 							\quad \wx\cdot e\geq 0\\ 
			  -(1+s)\left(2\,|\wx\cdot e|\right)^{1-s} &     	\quad \wx\cdot e< 0.
			\end{array} \right.
\end{equation}
In particular, we obtain a complementarity property
\begin{equation}\label{complementarity}
 h_e(\wx,x_n)\, \RR h_e(\wx)=0 \qquad\qquad \mathrm{on}\,\, \{x_n=0\}
\end{equation}
\end{itemize}

In view of properties above, $h_e$ is a solution of problem \eqref{POF}, so by \cite{S} $\partial_{\tau\tau}h_e\geq0$ for any vector 
$\tau\in \mathbb{S}^n\subset\R^{n-1}\times \{0\}$. So, thanks to its $(1+s)$-homogeneity, 
$h_e$ is a solution of 
\begin{equation}\label{e:global solution}
 \left\{  \begin{array}{ll}
			  v(\wx,0)\geq 0 & 						\quad \wx\in \R^{n-1}\\ 
			  v(\wx,x_n)=v(\wx,-x_n) &	\\ 
			  \diw(|x_n|^a\,\nabla v(\wx,x_n))=0 &			\quad x\in \R^n\setminus\{(\wx,0)\,:\,\uu(\wx,0)=0\}\\ 
			  \diw(|x_n|^a\,\nabla v(\wx,x_n))\leq 0 &    		\quad x\in \R^n\,\,\textrm{in distributional sense}\\
			  \partial_{\tau\tau}v\geq 0 & 				\quad \textrm{for any vector $\tau\in \partial B'_1$.} 
			\end{array} \right.
\end{equation}

According to \cite[Proposition 5.5]{CSS}, the function $h_e$ is, up to a rotation and the product by scalar, 
the unique $(1+s)$-homogeneous, global solution of \eqref{e:global solution}.

We consider the closed convex cone of $(1+s)$-homogeneous global solutions :
\begin{equation}\label{e:H 1+s set}
 \mathfrak{H}_{1+s}:=\{\l\,h_e\,\,:\,\, e\in \mathbb{S}^{n-2}, \l\in [0,+\infty)\}\subset H^1_{loc}(\R^n,\mu_a).
\end{equation}
Caffarelli, Salsa and Sivestre \cite{CSS} proved that $\mathfrak{H}_{1+s}\setminus\{\underline{0}\}$ is the set of blow-ups 
in the regular points of the free-boundary with lower frequency.\\
We note that $\mathfrak{H}_{1+s}$ is a closed cone in $H^1_{loc}(\R^n,\mu_a)$. The restriction
\begin{equation*}
 \mathfrak{H}_{1+s}|_{B_1}:=\{v|_{B_1}\,\,:\,\, v\in\mathfrak{H}_{1+s}\}\subset H^1(B_1,\mu_a)
\end{equation*}
is a closed set, and $\mathfrak{H}_{1+s}\setminus\{0\}$ is parameterized by a $(n-1)$-manifold by the map
\begin{align*}
 \mathbb{S}^{n-2}&\times (0,\infty)\xrightarrow{\Phi} \mathfrak{H}_{1+s}\setminus\{0\}\\
 (e&,\l)\quad\quad\longmapsto \quad\l\,h_e.
\end{align*}
Next we can introduce the tangent plane to space $\mathfrak{H}_{1+s}$ in every point $\l\,h_e$ as
\begin{equation}\label{def piano tangente}
 T_{\l\,h_e}\mathfrak{H}_{1+s}:=\{d_{(e,\l)}\Phi(\xi,\a)\,\,:\,\,\xi\cdot e_n=\xi\cdot e=0, \a\in \R\}
\end{equation}
We compute the derivative of the map $\Phi$ in a point of $\mathbb{S}^{n-2}\times (0,\infty)$:  
\begin{align}\label{diffPHI}
 d_{(e,\l)}\Phi(\xi,\a)= \frac{d}{dt}h_{\sigma(t)}|_{t=0}
\end{align}
with $\sigma(t)=\frac{e+t\xi}{\|e+t\xi\|}$, a curve on $\mathbb{S}^{n-2}$ such that $\sigma(0)=e$ and $\sigma'(0)=\xi$. 
By \eqref{he} and \eqref{diffPHI} we obtain
\begin{align*}
 {\frac{d}{dt}h_{\sigma(t)}}_{|t=0}
 =(s^{-1}-s)\,\wx\cdot\xi\left(\sqrt{(\wx\cdot e)^2+x_n^2}+\wx\cdot e\right)^s.
\end{align*}
Then, we can rewrite \eqref{def piano tangente} as
\begin{equation*}
 T_{\l\,h_e}\mathfrak{H}_{1+s}:=\{\a h_e + v_{e,\xi}\,\,:\,\,\xi\cdot e_n=\xi\cdot e=0, \a\in \R\}
\end{equation*}
where the function $v_{e,\xi}$ is defined as follows:
\begin{equation*}
v_{e,\xi}=\wx\cdot\xi\left(\sqrt{(\wx\cdot e)^2+x_n^2}+\wx\cdot e\right)^s. 
\end{equation*}
We highlight some properties of function $\psi\in \mathfrak{H}_{1+s}$. 
For all $\phi\in H^1(B_1,\mu_a)$, integrating by parts, according to Theorem~\ref{t:divergence theorem fractional} and
Euler's homogeneous function Theorem we obtain
\begin{equation}\label{diw+Eul}
 \begin{split}
\int_{B_1} \nabla \psi\cdot\nabla \phi d\mu_a
&= \int_{\partial B_1} \phi \,\nabla\psi \cdot x \,|x_n|^a\,d\mathcal{H}^{n-1} - 2\int_{B'_1} \phi\RR(\psi)\,d\mathcal{H}^{n-1}\\
&=(1+s)\int_{\partial B_1} \phi \psi\, |x_n|^a\,d\mathcal{H}^{n-1} - 2\int_{B'_1} \phi\RR(\psi)\,d\mathcal{H}^{n-1}.
\end{split}
\end{equation}

\begin{oss}
 The first variation of functional $W^{\underline{0}}_{1+s}(1,\cdot)$ in a point $\psi\in\mathfrak{H}_{1+s}$ 
 along a direction $\phi\in H^1(B_1,\mu_a)$ is\footnote{The first variation is defined as $\delta W^{\underline{0}}_{1+s}(1,\psi)[\phi]
  :=\lim_{t\to 0}\left(\frac{W^{\underline{0}}_{1+s}(1,\psi+t\phi)-W^{\underline{0}}_{1+s}(1,\psi)}{t}\right)$}
 \begin{align*}
  \delta W^{\underline{0}}_{1+s}(1,\psi)[\phi]
  =2\int_{B_1}\nabla\psi\cdot\nabla\phi\,d\mu_a-2(1+s)\int_{\partial B_1}\psi\,\phi\,|x_n|^a\,d\mathcal{H}^{n-1}.
 \end{align*}
  Then, by \eqref{diw+Eul} 
\begin{equation}\label{dGG}
 \delta W^{\underline{0}}_{1+s}(1,\psi)[\phi]= -4\int_{B'_1}\phi\RR(\psi)(\wx) \,d\mathcal{H}^{n-1},
\end{equation}
by \eqref{complementarity}
\begin{equation}
 \delta W^{\underline{0}}_{1+s}(1,\psi)[\psi]=0,
\end{equation}
so we can infer that
\begin{equation}\label{GG=dGG}
 W^{\underline{0}}_{1+s}(1,\psi)=\frac{1}{2} \delta W^{\underline{0}}_{1+s}(1,\psi)[\psi]=0 \qquad\qquad \forall\psi\in\mathfrak{H}_{1+s}.
\end{equation}
\end{oss}

\section{The epiperimetric inequality and its consequences}\label{s:epiperimetric and consequences}
In this section we prove an epiperimetric inequality for the points in $\Gamma_{1+s}(u)$, and its main consequences 
in the framework of the regularity of the free-boundary.
In Paragraph~\ref{s:proof epiperimetric} we prove the epiperimetric inequality. In Paragraph~\ref{s:decay} we establish
a decay estimate for adjusted boundary energy. In Paragraphs~\ref{s:3nondegeneration} and \ref{s:3!blowup} we state
the nondegeneracy of the solution and the uniqueness of the blow-ups in $\Gamma_{1+s}(u)$ respectively.

\subsection{Epiperimetric inequality}\label{s:proof epiperimetric}
We now state the main result of this paper: 
the \emph{epiperimetric inequality} ``à la Weiss'' in our setting.  
This result is a key ingredient in our approach to the decay of the boundary adjusted energy and to the uniqueness of blow-ups 
(see \cite{FS} for the classical case of Laplacian $s=1/2$).

In this paragraph we state and prove the epiperimetric inequality. For the convenience of readers, 
the proof will be split into several steps.

\begin{teo}[Epiperimetric inequality]\label{t:epiperimetric inequality}
 There exists a dimensional constant $\kappa\in(0,1)$ such that if $c\in H^1(B_1,\mu_a)$ is a $(1+s)$-homogeneous function 
 with $c\geq 0$ on $B'_1$ and $c(\wx,x_n)=c(\wx,-x_n)$
then
\begin{equation}\label{e:epip}
 \inf_{v\in \mathfrak{A}_c} W^{\underline{0}}_{1+s}(v)\leq (1-\kappa)W^{\underline{0}}_{1+s}(c).
\end{equation}
\end{teo}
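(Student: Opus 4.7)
The plan is a contradiction-compactness argument combined with a $\Gamma$-convergence analysis around the cone $\mathfrak{H}_{1+s}$, in the spirit of \cite{FS}. I would suppose, toward a contradiction, that for each $k$ there is a $(1+s)$-homogeneous $c_k\in H^1(B_1,\mu_a)$, even in $x_n$, nonnegative on $B'_1$, with $W_{1+s}(c_k)>0$ (for $W_{1+s}(c_k)\leq 0$ the inequality is trivial since $c_k\in\mathfrak{A}_{c_k}$ is itself a competitor) and
$$\inf_{v\in\mathfrak{A}_{c_k}} W_{1+s}(v) \geq \bigl(1-\tfrac{1}{k}\bigr)W_{1+s}(c_k).$$
Both sides being quadratic in $c_k$, I normalize by, e.g., $\|c_k\|_{L^2(\partial B_1,|x_n|^a\mathcal{H}^{n-1})}=1$. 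Theorems~\ref{T:1.31 HKM}, \ref{T:8.1 HK} and \ref{T:traccia partialB1} then furnish, along a subsequence, weak $H^1$-convergence $c_k\rightharpoonup c_\infty$ with strong convergence in $L^2(B_1,\mu_a)$ and on $\partial B_1$, and $c_\infty$ is $(1+s)$-homogeneous and nonnegative on $B'_1$. Letting $v_k$ minimize $W_{1+s}$ on $\mathfrak{A}_{c_k}$ and extracting a limit $v_\infty\in\mathfrak{A}_{c_\infty}$, lower semicontinuity combined with the contradiction hypothesis yields $W_{1+s}(v_\infty)\geq W_{1+s}(c_\infty)$; since $c_\infty\in\mathfrak{A}_{c_\infty}$ is a competitor, $c_\infty$ is in fact a minimizer, hence a $(1+s)$-homogeneous global solution of \eqref{POF'}. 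The classification of Section~\ref{s:3classification} then forces $c_\infty\in\mathfrak{H}_{1+s}$.

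Hence $\varepsilon_k:=\mathrm{dist}(c_k,\mathfrak{H}_{1+s})\to 0$, and I write $c_k=\psi_k+\varepsilon_k\xi_k$, where $\psi_k\in\mathfrak{H}_{1+s}$ is the projection of $c_k$ (in a Hilbert inner product on $(1+s)$-homogeneous functions) and $\|\xi_k\|=1$; by construction $\xi_k\perp T_{\psi_k}\mathfrak{H}_{1+s}$. Since $W_{1+s}$ is quadratic and vanishes on $\mathfrak{H}_{1+s}$ by \eqref{GG=dGG},
$$W_{1+s}(\psi_k+\varepsilon_k\xi) = \varepsilon_k\,\delta W_{1+s}(\psi_k)[\xi] + \varepsilon_k^2\,W_{1+s}(\xi),$$
and \eqref{dGG} identifies the first variation as $-4\int_{B'_1}\xi\,\RR(\psi_k)\,d\mathcal{H}^{n-1}$, nonnegative on perturbations with $\xi\geq 0$ on $\{\psi_k=0\}\cap B'_1$---the constraint inherited from $c_k+w\geq 0$. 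The contradiction hypothesis thus becomes a quasi-minimality statement for $\xi_k$ under the rescaled functionals
$$F_k(\xi):=\frac{1}{\varepsilon_k^2}W_{1+s}(\psi_k+\varepsilon_k\xi) = \frac{1}{\varepsilon_k}\,\delta W_{1+s}(\psi_k)[\xi] + W_{1+s}(\xi)$$
on admissible perturbations agreeing with $\xi_k$ on $\partial B_1$.

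The decisive step is then to compute the $\Gamma$-limit of $F_k$ (in the weak $H^1$ topology) and identify its minimizers. The singular prefactor $\varepsilon_k^{-1}$ in front of a nonnegative quantity forces every finite-energy $\xi$ to satisfy the complementarity $\RR(\psi_\infty)\,\xi=0$ on $B'_1$, i.e., $\xi\equiv 0$ on $\{\widehat{x}\cdot e_\infty<0\}\cap B'_1$, where $\psi_\infty=\lambda_\infty h_{e_\infty}$. Hence
$$F_\infty(\xi)=\begin{cases} W_{1+s}(\xi) & \text{if } \xi\geq 0 \text{ on } \{\widehat{x}\cdot e_\infty\leq 0\}\cap B'_1 \text{ and } \RR(\psi_\infty)\,\xi=0,\\ +\infty & \text{otherwise.}\end{cases}$$
Up to subsequence $\xi_k\rightharpoonup\xi_\infty$, and the quasi-minimality passes to minimality of $\xi_\infty$ for $F_\infty$ in its boundary class. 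The Euler-Lagrange conditions for this limit problem, combined with the $(1+s)$-homogeneity inherited from $\xi_k$ and a further appeal to the classification of Section~\ref{s:3classification}, force $\xi_\infty\in T_{\psi_\infty}\mathfrak{H}_{1+s}$. The orthogonality $\xi_\infty\perp T_{\psi_\infty}\mathfrak{H}_{1+s}$ inherited from the projection then yields $\xi_\infty=0$, and strong $L^2$-compactness upgrades weak to strong convergence, contradicting $\|\xi_k\|=1$.

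I expect the main obstacle to be the $\Gamma$-convergence step: the penalty carries $\RR(\psi_k)$, whose support $\{\widehat{x}\cdot e_k<0\}\cap B'_1$ rotates with $k$, so establishing the $\Gamma$-liminf inequality (to pass quasi-minimality through the limit) and constructing recovery sequences that respect both the moving sign constraint and the $(1+s)$-homogeneity requires a careful geometric approximation. A subsidiary but crucial difficulty is identifying the minimizers of $F_\infty$ with $T_{\psi_\infty}\mathfrak{H}_{1+s}$---this is where the two competing variational principles alluded to in the introduction meet, and it hinges on the full strength of the classification of $(1+s)$-homogeneous global solutions together with the integration-by-parts identity \eqref{diw+Eul}.
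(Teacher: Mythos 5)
Your overall strategy — contradiction, quasi-minimality of the renormalized difference from the nearest point of $\mathfrak{H}_{1+s}$, $\Gamma$-convergence of the rescaled auxiliary functionals, and orthogonality vs.\ tangency of the limit direction — is essentially the paper's proof of Theorem~\ref{t:epiperimetric inequality}. But several steps that you treat as routine are where the paper does the real work, and one of them is a genuine gap in your argument.

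The most significant gap is in your very first compactness step. You normalize $\|c_k\|_{L^2(\partial B_1,|x_n|^a\mathcal{H}^{n-1})}=1$ and invoke Theorems~\ref{T:1.31 HKM}, \ref{T:8.1 HK}, \ref{T:traccia partialB1} to extract a weak $H^1$ limit. But those compactness theorems require a bound on $\|c_k\|_{H^1(B_1,\mu_a)}$, i.e.\ on $\int_{B_1}|\nabla c_k|^2\,d\mu_a$. Under your normalization this Dirichlet energy equals $W^{\underline{0}}_{1+s}(1,c_k)+(1+s)$, and nothing in the contradiction hypothesis $\inf_v W_{1+s}(v)\geq (1-\tfrac1k)W_{1+s}(c_k)$ bounds $W_{1+s}(c_k)$ from above: it is a lower bound for the infimum, not an upper bound for $W_{1+s}(c_k)$. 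So $\|\nabla c_k\|_{L^2(B_1,\mu_a)}$ may blow up, the weak limit $c_\infty$ need not exist, and your conclusion $\varepsilon_k=\mathrm{dist}(c_k,\mathfrak{H}_{1+s})\to 0$ is unsupported. The paper sidesteps this entirely: instead of normalizing the trace, it uses the cone structure of $\mathfrak{H}_{1+s}$ to \emph{choose} $\gamma_j c_j$ with $\mathrm{dist}(\gamma_j c_j,\mathfrak{H}_{1+s})=\delta_j\to 0$, then normalizes $z_j=(c_j-\psi_j)/\delta_j$ to have unit $H^1$ norm. No compactness of the $c_j$ themselves is needed or claimed.

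Even granting compactness, you implicitly reduce to a single case where the paper needs three. You argue that ``the singular prefactor $\varepsilon_k^{-1}$ forces the complementarity $\RR(\psi_\infty)\xi=0$,'' but the actual coefficient in $F_k$ is $\theta_k=\lambda_k/\varepsilon_k$, and whether it blows up is a priori unclear. Your trace-normalization would in fact force $\lambda_k\to\lambda_\infty>0$ (so $\theta_k\to\infty$, eliminating the paper's case $\theta<\infty$, its Step~5) — but \emph{only} if the compactness holds, which is the point above. More importantly, you never consider the possibility that $F_k(\xi_k)=W_{1+s}(c_k)/\varepsilon_k^2\to+\infty$. This is the paper's case~(3) (Step~6), and it is handled by a further rescaling of the functionals by $\gamma_j$, leading to a limit whose value is $0$ while the rescaled energies converge to $1$, a contradiction. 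Your proposal has no counterpart for this case, and without it the argument is incomplete.

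Finally, the step you describe as ``Euler--Lagrange conditions plus the classification force $\xi_\infty\in T_{\psi_\infty}\mathfrak{H}_{1+s}$'' is, in the paper, a nontrivial structure result (Step~8, proved in the Appendix via \cite[Lemma A.3]{GPPS}). It does not follow from the classification of global solutions of \eqref{POF'}: $\xi_\infty$ is not a solution of the obstacle problem but of the linear problem $L_a\xi_\infty=0$ in $B_1\setminus B'^{,-}_1$, $\xi_\infty=0$ on $B'^{,-}_1$. Identifying the $(1+s)$-homogeneous solutions of this mixed boundary-value problem requires the bidimensional ODE reduction carried out in the Appendix — boundedness of $\partial_{ij}z_\infty$ for $i,j\leq n-2$ forces $\partial_{ij}z_\infty\equiv 0$ by homogeneity, and the remaining two-dimensional profiles are pinned down by solving a Sturm--Liouville problem. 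Presenting this as a corollary of the classification theorem misattributes where the work lies, and in a complete proof this is a lemma that must be established.
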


\begin{proof}
Without loss of generality it is possible to suppose that the function $c$ satisfies the follows condition
\begin{equation}\label{e:epiperimetric cond delta}
\mathrm{dist}_{H^1(B_1,\mu_a)}(c,\mathfrak{H}_{1+s})<\d.
 \end{equation}
In fact, according to the $(1+s)$-homogeneity of $c$ and recalling that $\mathfrak{H}_{1+s}$ is a cone, 
for all $\d>0$ there exists a constant $\gamma>0$ such that
\begin{equation*}
 \mathrm{dist}_{H^1(B_1,\mu_a)}(\gamma c,\mathfrak{H}_{1+s})<\d.
\end{equation*}
We can observe that  if $v\in \mathfrak{A}_{\gamma c}$ then $\gamma^{-1}v\in \mathfrak{A}_{c}$.
So, if we prove inequality \eqref{e:epip} for the function $\c c$, or rather  
\begin{equation*}
 \inf_{v\in \mathfrak{A}_{\gamma c}} W^{\underline{0}}_{1+s}(1,v)\leq (1-\kappa)W^{\underline{0}}_{1+s}(1,\gamma c),
\end{equation*}
then, thanks to $W^{\underline{0}}_{1+s}(1,\gamma c)=\gamma^2 W^{\underline{0}}_{1+s}(1,c)$ we infer
\begin{equation*}
 \inf_{w\in \mathfrak{A}_{c}} W^{\underline{0}}_{1+s}(1,w)\leq (1-\kappa)W^{\underline{0}}_{1+s}(1,c).
\end{equation*}
To simplify the notation we denote the functional $W^{\underline{0}}_{1+s}(1,\cdot)$ by $\GG(\cdot)$.\\
 We argue by contradiction. Let us suppose the existence of sequences of positive numbers $\kappa_j,\d_j\downarrow 0$ and a sequence of 
 $(1+s)$-homogeneous functions $c_j\in H^1(B_1,\mu_a)$ with $c_j\geq 0$ on $B'_1$ such that
 \begin{align}\label{1abs}
  \mathrm{dist}_{H^1(B_1,\mu_a)}(c_j,\mathfrak{H}_{1+s})=\d_j,	\\\label{2abs}
  (1-\kappa)\GG(c) \leq \inf_{v\in \mathfrak{A}_c} \GG(v) .
 \end{align}
In particular, fixing $h:=h_{e_n}$, up to change of coordinate depending on j, we assume that there exists $\l_j\geq 0$ for which $\psi_j:=\l_jh$
is the point satisfying the minimum distance between $c_j$ and $\mathfrak{H}_{1+s}$, or rather
\begin{equation}\label{psi_j-c_j=dist(c_j,H_1+s)}
 \|\psi_j-c_j\|_{H^1(B_1,\mu_a)} = \mathrm{dist}_{H^1(B_1,\mu_a)}(c_j,\mathfrak{H}_{1+s})=\d_j, \qquad \forall j\in\N.
\end{equation}
We split the proof into some intermediate steps.\\

\textbf{Step 1: Auxiliary functionals.} We can rewrite \eqref{2abs} and interpret this inequality as a condition of quasi-minimality for 
a sequence of new functionals. Setting $j\in\N$, let $v\in\mathfrak{A}_{c_j}$, we use \eqref{dGG} (applied twice to $\psi_j$
with test functions $c_j-\psi_j$ and $v-\psi_j$) and \eqref{GG=dGG}; we can rewrite \eqref{2abs}:
\begin{equation}\label{e:rewrite1 2abs}
 \begin{split}
 (1-\kappa_j)&\left(\GG(c_j)-\GG(\psi_j)-\d\GG(\psi_j)[c_j-\psi_j]-4\int_{B'_1}(c_j-\psi_j)\RR(\psi_j)\, d\mathcal{H}^{n-1}\right)\\
 &\leq \GG(v)-\GG(\psi_j)-\d\GG(\psi_j)[v-\psi_j] - 4\int_{B'_1}(v-\psi_j)\RR(\psi_j)\, d\mathcal{H}^{n-1}.
 \end{split}
\end{equation}
We can observe that $\GG(v_1)-\GG(v_2)-\d\GG(v_2)[v_1-v_2]=\GG(v_1-v_2)$, then for all $v\in\mathfrak{A}_{c_j}$ 
\eqref{e:rewrite1 2abs} can be rewritten as
\begin{equation}\label{q-m GG(psi_j)}
 \begin{split}
  (1-\kappa_j)&\left(\GG(c_j-\psi_j) - 4\int_{B'_1}(c_j-\psi_j) \RR(\psi_j)\, d\mathcal{H}^{n-1}\right)  \\
&\phantom{AAAAAAAAAAAAAAA}\leq \GG(v-\psi_j) - 4\int_{B'_1}(v-\psi_j)\RR(\psi_j)\, d\mathcal{H}^{n-1}.
 \end{split}
\end{equation}
Next we define new sequences of functions 
\begin{equation}\label{def z_j}
z_j:=\frac{c_j-\psi_j}{\d_j}  
\end{equation}
(recalling that $\psi_j=\l_jh$), positive numbers $\theta_j:=\frac{\l_j}{\d_j}$ and sets $\mathcal{B}_j:=\{z\in z_j+H^1_0(B_1,\mu_a)\,\,:\,\, (z+\theta_j h)|_{B'_1}\geq 0\}$.
Now we introduce a sequence of auxiliary functionals $\GG_j: L^2(B_1,\mu_a)\to (-\infty,+\infty]$
\begin{equation}\label{GG_j}
      \GG_j(z):=\left\{  \begin{array}{ll}
	  \displaystyle \int_{B_1}|\nabla z|^2\,d\mu_a - (1+s)\int_{\partial B_1} z_j^2\,|x_n|^a\,d\mathcal{H}^{n-1} - 4\theta_j\int_{B'_1}z\RR(h)\,d\mathcal{H}^{n-1}  &\\
	  & \!\!\!\!\!\!\!\!\!\!\!\!\!\!\!\!\!\!\!\!\!\!		\textit{if }\, z\in \mathcal{B}_j\\
	  +\infty & \!\!\!\!\!\!\!\!\!\!\!\!\!\!\!\!\!\!\!\!\!\!				\textit{otherwise}.
			\end{array} \right.
\end{equation}
We can observe that the second term in the formula above does not depend on $z$ but only on its boundary 
datum $z|_{\partial B_1}=z_j|_{\partial B_1}$.

We can rewrite \eqref{q-m GG(psi_j)} with the new notation and obtain
\begin{equation*}
 \begin{split}
  (1-\kappa_j)&\left(\GG(\d_jz_j) - 4\d_j\int_{B'_1}z_j \RR(\l_j h)\, d\mathcal{H}^{n-1}\right)  
\leq \GG(\d_j z) - 4\d_j\int_{B'_1}z\RR(\l_j h)\, d\mathcal{H}^{n-1}
 \end{split} 
\end{equation*}
and dividing by $\d_j^2$ we obtain the condition of quasi-minimality for $z_j$ with respect to $\GG_j$:
\begin{equation}\label{quasiminimality z_j GG}
 (1-\kappa_j)\GG_j(z_j)\leq \GG_j(z) \qquad\qquad \forall z\in L^2(B_1,\mu_a).
\end{equation}

Therefore we note that by the very definitions of $z_j$ and $\d_j$ we have 
\begin{equation}\label{|z_j|=1}
\|z_j\|_{H^1(B_1,\mu_a)}=1.
\end{equation}
So, by the compactness of Sobolev embedding from $H^1(B_1,\mu_a)$ into the space $L^2(B_1, \mu_a)$ Theorem~\ref{T:1.31 HKM}, 
the trace operator from $H^1(B_1,\mu_a)$ into the space $L^2(B'_1)$ Theorem~\ref{T:tracciaFoc09}, 
and the trace operator from $H^1(B_1,\mu_a)$ into $L^2(\partial B_1, |x_n|^a\mathcal{H}^{n-1})$ Theorem~\ref{T:traccia partialB1},
we may extract a subsequence (which we do not relabel) such that
\begin{itemize}
 \item[(a)] $(z_j)_{j\in \N}$ converges weakly in $H^1(B_1,\mu_a)$ to some $z_\infty$;
 \item[(b)] the sequences of traces $z_j|_{B'_1}$ and $z_j|_{\partial B_1}$ converge respectively 
	    in $L^2(B'_1)$ and $L^2(\partial B_1,|x_n|^a\mathcal{H}^{n-1})$;
 \item[(c)] $\theta_j$ has a limit $\theta\in [0,\infty]$.
\end{itemize}

\textbf{Step 2: First property of $(\GG_j)_{j\in\N}$.} In this step we establish the equi-coercivity and some other properties of
the family $(\GG_j)_{j\in \N}$.

We observe that for all $w\in\mathcal{B}_j$, since $w|_{\partial B_1}=z_j|_{\partial B_1}$ and $h\RR(h)(\wx)=0$, it holds that
\begin{equation}\label{-wdh>0}
\begin{split}
 -\int_{B'_1}\!\!w\RR(h)(\wx)\,d\mathcal{H}^{n-1}
 = -\int_{B'_1}\!\!(w+\theta_j h)\RR(h)(\wx)\,d\mathcal{H}^{n-1}+\theta_j\int_{B'_1}\!\!h\RR(h)(\wx)\,d\mathcal{H}^{n-1}\geq 0
\end{split}
\end{equation}
where we used \eqref{lim_eh} for which $\RR(h)(\wx)\leq 0$ and the condition $w\in \mathcal{B}_j$ for which $(w+\theta_j h)_{|B_1'}\geq 0$. 
Then from the definition of \eqref{GG_j} we have
\begin{equation}\label{cond_to_equicoerc_GGj}
 \int_{B_1}|\nabla w|^2\,d\mu_a - (1+s)\int_{\partial B_1}z_j^2\,|x_n|^a\,d\mathcal{H}^{n-1}\leq \GG_j(w).
\end{equation}
This establishes the equi-coercivity of the sequence $\GG_j$, in fact from \eqref{|z_j|=1}, thanks to strong convergence of traces, 
we obtain
\begin{equation*}
 \liminf_{j\in \N}\GG_j(z_j)\geq -(1+s)\int_{\partial B_1}z_\infty^2\, |x_n|^a\, d\mathcal{H}^{n-1} - 4\theta\int_{B'_1}z_\infty \RR(h)\,d\mathcal{H}^{n-1};
\end{equation*}
while if $\theta=+\infty$ from \eqref{|z_j|=1} and \eqref{cond_to_equicoerc_GGj} we conclude that
\begin{equation*}
 \liminf_{j\in \N}\GG_j(z_j)\geq -(1+s)\int_{\partial B_1}z_\infty^2\, |x_n|^a\,d\mathcal{H}^{n-1}.
\end{equation*}
Note that it is not restrictive (up to subsequence) to assume that $\GG_j(z_j)$ has a limit in $(-\infty,+\infty]$. 
Finally we can observe that 
\begin{equation}\label{lim GGj=+infty}
 \lim_{j\to \infty} \GG_j(z_j)=+\infty \qquad\Longleftrightarrow\qquad \lim_{j\to \infty} \theta_j \int_{B'_1}z_j \RR(h)\,d\mathcal{H}^{n-1}=-\infty.
\end{equation}
\textbf{Step 3: Asymptotic analysis of $(\GG_j)_{j\in\N}$.} 
In this step we prove a result of $\Gamma$-convergence for the family of functionals $(\GG_j)_{j\in\N}$.

We can distinguish three cases:\\
 (1) If $\theta\in[0,+\infty)$, then $(z_\infty+\theta h)|_{B'_1}\geq 0$ and
 $\Gamma(L^2(B_1,\mu_a))$-$\lim\GG_j=\GG_\infty^{(1)}$ with
 \begin{equation*}
	  \GG_\infty^{(1)}(z):=\left\{  \begin{array}{ll}
	  \displaystyle \int_{B_1}|\nabla z|^2\,d\mu_a - (1+s)\int_{\partial B_1} z_\infty^2\,|x_n|^a\,d\mathcal{H}^{n-1} - 4\theta\int_{B'_1}z\RR(h)\,d\mathcal{H}^{n-1} 	&\\
	  &\!\!\!\!\!\!\!\!\!\!\!\!\!\!\!\!\!\!\!\!\!\!\!\!\! 						\textrm{if}\, z\in \mathcal{B}^{(1)}_\infty\\
	  +\infty &\!\!\!\!\!\!\!\!\!\!\!\!\!\!\!\!\!\!\!\!\!\!\!\!\!   					\textrm{otherwise},
			\end{array} \right.
 \end{equation*}
where $\mathcal{B}^{(1)}_\infty:=\{z\in z_\infty+ H^1_0(B_1,\mu_a)\,\,:\,\, (z+\theta h) |_{B'_1}\geq 0\}$.\\
 (2) If $\theta=+\infty$ and $\lim_j \GG_j(z_j)<\infty$, then $z_\infty|_{B_1^{',-}}=0$ (where $B^{',-}_1=B'\cap \{x_{n-1}\leq 0\}$)
and $\Gamma(L^2(B_1,\mu_a))$-$\lim\GG_j=\GG_\infty^{(2)}$ with
 \begin{equation*}
      \GG_\infty^{(2)}(z):=\left\{  \begin{array}{ll}
	  \displaystyle\int_{B_1}|\nabla z|^2\,d\mu_a - (1+s)\int_{\partial B_1} z_\infty^2\,|x_n|^a\,d\mathcal{H}^{n-1} &	\qquad \mathrm{if}\, z\in \mathcal{B}^{(2)}_\infty\\
	  +\infty &  												\qquad \mathrm{otherwise},
			\end{array} \right.
 \end{equation*}
where $\mathcal{B}^{(2)}_\infty:=\{z\in z_\infty+ H^1_0(B_1,\mu_a)\,\,:\,\, z|_{B'^{,-}_1}=0\}$. 
We note that the third addendum of $\GG_j$ is zero in $\mathcal{B}^{(2)}_\infty$, 
while if $z\in\mathcal{B}_j\setminus\mathcal{B}^{(2)}_\infty$ the sequence $\GG_j(z)$ diverges; 
this heuristically justifies the choice of $\GG_\infty^{(2)}(z)$ and $\mathcal{B}^{(2)}_\infty$.\\
(3) If $\theta=+\infty$ and $\lim_j \GG_j(z_j)=+\infty$, then $\Gamma(L^2(B_1,\mu_a))$-$\lim\GG_j=\GG_\infty^{(3)}$ with
 \begin{equation*}
        \GG_\infty^{(3)}(z)=+\infty \qquad\qquad \mathrm{on}\,\,L^2(B_1,\mu_a).
 \end{equation*}

For the reader's convenience we recall the Definition of $\Gamma$-limit (see \cite{DalMaso}); 
the equality $\Gamma(L^2(B_1,\mu_a))$-$\lim\GG_j=\GG_\infty^{(i)}$ with $i=1,2,3$ is satisfied if the two following conditions hold:
\begin{itemize}
 \item[(a)] for all sequences $(w_j)_j\subset L^2(B_1,\mu_a)$ and $w\in L^2(B_1,\mu_a)$ such that $w_j\to  w$ in $L^2(B_1,\mu_a)$ it holds
 \begin{equation}\label{Gammaliminf}
  \liminf_j \GG_j(w_j)\geq \GG_j^{(i)}(w)
 \end{equation}
\item[(b)] for all $w\in L^2(B_1,\mu_a)$ there exists a sequence $(w_j)_j\subset L^2(B_1,\mu_a)$ such that $w_j\to w$ in $L^2(B_1,\mu_a)$ and
 \begin{equation}\label{Gammalimsup}
  \limsup_j \GG_j(w_j)\leq \GG_j^{(i)}(w).
 \end{equation}
\end{itemize}

\noindent \emph{Proof of the $\Gamma$-convergence: case (1)}.\\ 
(a) Without loss of generality we may suppose that $\liminf_j \GG_j(w_j)=\lim_j \GG_j(w_j)<+\infty$, then $w_j\in \mathcal{B}_j$
for all $j\in \N$. 
 Taking \eqref{cond_to_equicoerc_GGj} into account, we deduce
 \begin{equation*}
  \int_{B_1}|\nabla w_j|^2\,d\mu_a\leq \GG_j(w_j) + (1+s)\int_{\partial B_1} w_j^2\,|x_n|^a\,d\mathcal{H}^{n-1}<+\infty,  
 \end{equation*}
then, since $w_j\to w$ in $L^2(B_1,\mu_a)$ we have $\sup_j\|w_j\|_{H^1(B_1,\mu_a)}<+\infty$, 
so from Theorem~\ref{T:1.31 HKM} 
$\nabla w_j\rightharpoonup \nabla w$ in $L^2(B_1,\mu_a)$.
Then the respective traces converge in $L^2(\partial B_1,\mu_a)$ Theorem~\ref{T:traccia partialB1} 
and $L^2(B'_1)$ Theorem~\ref{T:tracciaFoc09}.
Hence, we obtain $(w+\theta h)|_{B'_1}\geq 0$ and, in particular,
since $w_j|_{B'_1}=z_j|_{B'_1}$ then $w|_{B'_1}=z_\infty|_{B'_1}$ and so $z_\infty\in \mathcal{B}^{(1)}_\infty$. At this point 
thanks to the convergence of traces of $w_j$ and weak semicontinuity of the norm of the gradient in $L^2(B_1,\mu_a)$ we have \eqref{Gammaliminf}.\\

\noindent(b) We observe that it is sufficient to prove the inequality for $w\in \mathcal{B}_\infty^{(1)}$ with
 \begin{equation}\label{w-zinfty suppcomp}
  \mathrm{supp}(w-z_\infty)\subset B_\rho \qquad \mathrm{for\,\, some}\,\,\rho\in(0,1).
 \end{equation}
If we want to deal with the general case, we consider the function
 \begin{equation*}
  w_t(x)=t^{1+s}\left(w\left(\frac{x}{t}\right)\chi_{B_1}\left(\frac{x}{t}\right)+z_\infty\left(\frac{x}{t}\right)\chi_{B_{1/t}\setminus \overline{B_1}}\left(\frac{x}{t}\right)\right)\quad \mathrm{with}\,\,t<1.
 \end{equation*}
It is easy to prove that $w_t\in H^1(B_1,\mu_a)$ and $\mathrm{supp}(w_t-z_\infty)\subset B_t$; moreover, $w_t\to w$ in $H^1(B_1,\mu_a)$ 
(for a similar procedure see \cite[Proposition~2.4.1, Chapter~2]{Geraci-Thesis}).
If \eqref{Gammalimsup} holds for all $w_t$, resorting to a diagonalization argument we obtain \eqref{Gammalimsup} for $w$.
Therefore for a Uryshon's type property it is sufficient to prove the following property: fixing $w$ as in \eqref{w-zinfty suppcomp}, 
for all sub sequences $j_k\uparrow + \infty$ there exists an extract subsequence $j_{k_l}\uparrow + \infty$ 
and there exists $w_l\to w$ in $L^2(B_1,\mu_a)$ such that 
\footnote{Let us suppose by contradiction that there exists $w$ such that 
\begin{equation*}
 \Gamma\mathit{-}\limsup_j\GG_j(w)>\GG_\infty^{(1)}(w),
\end{equation*}
if $(w_j)_{j\in\N}$ is a sequence that achieves the $\Gamma$-$\limsup$, i.e. $\limsup_j\GG_j(w_j)=\Gamma$-$\limsup_j\GG_j(w_j)$,
and $j_k$ is a subsequence for which $\limsup_j\GG_j(w_j)=\limsup_k\GG_{j_k}(w_{j_k})$, 
by assumption then there exists $j_{k_l}$ such that
\begin{equation*}
 \lim_l\GG_{j_{k_l}}(w_{j_{k_l}})\leq \GG_\infty^{(1)}(w),
\end{equation*}
leading to a contradiction.
}
\begin{equation*}
\limsup_l\GG_{j_{k_l}}(w_l)\leq \GG_{\infty}^{(1)}(w).
\end{equation*}

Setting $r\in(\rho,1)$ let $R:=\frac{1+r}{2}$ and let $\phi\in C^1_c(B_1)$ be a cut-off function such that 
\begin{equation*}
 \phi|_{B_r}\equiv 1, \qquad  \phi|_{B_1\setminus \overline{B_R}}\equiv 0, \qquad \|\nabla \phi\|_{L^\infty}\leq \frac{4}{1-r}.
\end{equation*}
We define
\begin{equation}\label{e:wkr}
 w_k^r :=\phi\left(w+(\theta-\theta_{j_k})h\right) + (1-\phi)z_{j_k}
\end{equation}
and we verify that $w_k^r\in\mathcal{B}_{j_k}$. In fact $w\in\mathcal{B}_\infty^{(1)}$, $z_{j_k}\in\mathcal{B}_{j_k}$ and
\begin{equation*}
 w_k^r + \theta_{j_k}h= \phi(w + \theta h) + (1-\phi)(z_{j_k} + \theta_{j_k}h)\geq 0. 
\end{equation*}
Therefore, since $\theta_{j_k}\to \theta\in[0,+\infty)$ we have $w_k^r\to \phi w + (1-\phi)z_\infty$ in $L^2(B_1,\mu_a)$.
Thanks to the convergence of traces of $z_{j_k}$ in $L^2(B_1')$ it is enough to prove the upper bound inequality
for the first addendum of
$\GG_j$ and $\GG_\infty^{(1)}$ respectively. From \eqref{e:wkr}, we can infer 
\begin{equation}\label{{||nabla wrk||<}}
\begin{split}
 \int_{B_1}|\nabla w_k^r|^2\,d\mu_a \leq &\int_{B_r}|\nabla w +(\theta-\theta_{j_k})\nabla h|^2\,d\mu_a
 +\underbrace{\int_{B_R\setminus \overline{B_r}}|\nabla w_k^r|^2\,d\mu_a}_{:=I_k} + \int_{B_1\setminus \overline{B_R}}|\nabla z_{j_k}|^2\,d\mu_a. 
\end{split}
\end{equation}
Since $r>\rho$, from assumption \eqref{w-zinfty suppcomp}, we estimate the term $I_k$ as follows
\begin{equation*}
\begin{split}
 I_k\leq &3\int_{B_R\setminus \overline{B_r}} \phi^2|\nabla w+(\theta-\theta_{j_k})\nabla h|^2\,d\mu_a\\ 
 &+ 3\int_{B_R\setminus \overline{B_r}} (1-\phi)^2|\nabla z_{j_k}|^2\,d\mu_a + 3\int_{B_R\setminus \overline{B_r}} |\nabla\phi|^2|z_\infty-z_{j_k}+(\theta-\theta_{j_k})\nabla h|^2\,d\mu_a 
\end{split}
\end{equation*}
So 
\begin{equation}\label{limsup||nabla wrk||<}
\begin{split}
 \limsup_k \int_{B_1}|\nabla w_k^r|^2\,d\mu_a
 \leq \int_{B_r}|\nabla w|^2\,d\mu_a +3\int_{B_R\setminus \overline{B_r}}|\nabla w|^2\,d\mu_a + 4\limsup_k\,\int_{B_1\setminus \overline{B_r}}|\nabla z_{j_k}|^2\,d\mu_a
\end{split}
\end{equation}
By the $(1+s)$-homogeneity of $z_{j_k}$, we deduce 
\begin{equation*}
\begin{split}
 \int_{B_1\setminus \overline{B_r}}&|\nabla z_{j_k}|^2\,d\mu_a
 =\int^{1}_{r}\int_{\partial B_t} |\nabla z_{j_k}|^2\,|x_n|^a\,d\mathcal{H}^{n-1}\,dt\\
 =&\int^{1}_{r}t^n\int_{\partial B_1} |\nabla z_{j_k}|^2\,|x_n|^a\,d\mathcal{H}^{n-1}\,dt
 =\frac{1-r^{n+1}}{n+1}\int_{\partial B_1} |\nabla z_{j_k}|^2\,|x_n|^a\,d\mathcal{H}^{n-1}
\end{split}
\end{equation*}
which leads us to
\begin{equation*}
\begin{split}
 \int_{\partial B_1} |\nabla z_{j_k}|^2\,|x_n|^a\,d\mathcal{H}^{n-1}=\frac{n+1}{1-(1/2)^{n+1}}\int_{B_1\setminus \overline{B_r}}|\nabla z_{j_k}|^2\,d\mu_a \stackrel{\eqref{|z_j|=1}}{\leq} 2(n+1)
\end{split}
\end{equation*}
in turn implying
\begin{equation}\label{e:int nabla zjk su B1-Br}
\begin{split}
 \int_{B_1\setminus \overline{B_r}}&|\nabla z_{j_k}|^2\,d\mu_a
 \leq 2\,(1-r)\,(n+1).
\end{split}
\end{equation}
We apply this construction to a subsequence $r_l\uparrow 1$ and $R_l:=\frac{1+r_l}{2}$ and with a diagonal argument we obtain a
subsequence $w_l\to w$ in $L^2(B_1,\mu_a)$. 
Thanks to \eqref{limsup||nabla wrk||<} and \eqref{e:int nabla zjk su B1-Br}
\begin{equation*}
 \begin{split}
  \limsup_l &\int_{B_1}|\nabla w_l|^2\,d\mu_a
  \leq \int_{B_1}|\nabla w|^2\,d\mu_a + 3\limsup_l\int_{B_{R_l}\setminus \overline{B_{r_l}}}|\nabla w|^2\,d\mu_a + 4 \limsup_l \int_{B_1\setminus \overline{B_{r_l}}}|\nabla z_{j_l}|^2\,d\mu_a\\
  &\leq \int_{B_1}|\nabla w|^2\,d\mu_a + \lim_l 8\,(1-r_l)(n+1)
  = \int_{B_1}|\nabla w|^2\,d\mu_a,
 \end{split}
\end{equation*}
and this provides the conclusion.

\noindent\textit{Proof of the $\Gamma$-convergence: case (2).}\\
(a) Without loss of generality we assume that
 \begin{equation}\label{liminf=lim(2)}
  \liminf_j\GG_j(w_j)=\lim_j\GG_j(w_j) <+\infty.
 \end{equation}
 Let $w_j\to w$ in $L^2(B_1,\mu_a)$, since $w_j\in \mathcal{B}_j$ and \eqref{liminf=lim(2)}, then $w\geq 0$ on $B'^{,-}_1$.
 From \eqref{-wdh>0}, we obtain
 \begin{align*}
 0&\leq -\theta_j\int_{B'_1}w_j\RR(h)\,d\mathcal{H}^{n-1}
 \leq \GG_j(w_j)+(1+s)\int_{\partial B_1}z_j^2\,|x_n|^a\,d\mathcal{H}^{n-1}\\
 &\leq \sup_j\left(\GG_j(w_j)+(1+s)\int_{\partial B_1}z_j^2\,|x_n|^a\,d\mathcal{H}^{n-1}\right)<+\infty.
 \end{align*}
Then dividing by $\theta_j$, the convergence of traces leads us to
\begin{equation*}
 \int_{B'_1}w\RR(h)\,d\mathcal{H}^{n-1}=\lim_j \int_{B'_1}w_j\RR(h)\,d\mathcal{H}^{n-1}=0
\end{equation*}
From \eqref{lim_eh} we deduce that $w|_{B'^{,-}_1}=0$, or rather $w\in \mathcal{B}_\infty^{(2)}$. 
In particular also $z_\infty\in \mathcal{B}_\infty^{(2)}$ because $\sup_j\GG_j(z_j)<+\infty$. 
Then, according to the semicontinuity of the norm $H^1(B_1,\mu_a)$ with respect to weak convergence of gradient,
the convergence of $w_j$ in $L^2(B_1,\mu_a)$ and the convergence of traces in $L^2(\partial B_1,|x_n|^a\mathcal{H}^{n-1})$ 
we obtain the $\Gamma$-$\liminf$ inequality \eqref{Gammaliminf}.\\

\noindent(b) Now we prove the inequality \eqref{Gammalimsup}. With the same argument used in case (1) we can consider the case of 
$w\in \mathcal{B}_\infty^{(2)}$ for which \eqref{w-zinfty suppcomp} holds and for which for all $j_k\uparrow +\infty$ we find a subsequence
$j_{k_l}\uparrow +\infty$ and a sequence $w_l\to w$ in $L^2(B_1,\mu_a)$ such that
 \begin{equation}\label{limsup=lim(2)}
   \limsup_l\GG_{j_{k_l}}(w_l)\leq \GG_\infty^{(2)}.
  \end{equation}
We introduce the positive Radon measures 
\begin{equation*}
 \nu_k:= |\nabla z_{j_k}|^2\,|x_n|^a\,\mathcal{L}^n\llcorner B_1 
	- 4\theta_{j_k}(z_{j_k}+\theta_{j_k}h)\,\RR(h)\mathcal{H}^{n-1}\llcorner B'^{,-}_1.
\end{equation*}
Assuming that  $k>>1$, we obtain
\begin{align*}
 \nu_k(B_1)=\GG_{j_k}(z_{j_k}) + (1+s)\int_{\partial B_1} z_{j_k}^2\,|x_n|^a\, d\mathcal{H}^{n-1}
	  \leq \sup_j \GG_j(z_j) + C \sup_j \|z_j\|_{H^1(B_1,\mu_a)}<\infty,
\end{align*}
which leads us to
\begin{equation*}
 \sup_k \nu_k(B_1)= \Lambda_0<+\infty.
\end{equation*}
In order to prove $\nu_k(B_\rho)=\rho^{n+1} \nu(B_1)$ we observe that setting $\rho\in (0,1)$ by $(1+s)$-homogeneity 
of $z_{j_k}$ we obtain 
\begin{equation*}
 \begin{split}
  \int_{B_\rho}|\nabla &z_{j_k}|^2\,d\mu_a = \int_0^\rho dt \int_{\partial B_t} |\nabla z_{j_k}|^2\,|x_n|^a\,d\mathcal{H}^{n-1}
  \stackrel{x=ty}{=}\int_0^\rho t^{n-1} \int_{\partial B_1} |\nabla z_{j_k}(ty)|^2\,|ty_n|^a\,d\mathcal{H}^{n-1}(y)\,dt\\
  = &\int_0^\rho t^{n} \int_{\partial B_1} |\nabla z_{j_k}(y)|^2\,|y_n|^a\,d\mathcal{H}^{n-1}(y)\,dt
  = \frac{\rho^{n+1}}{n+1} \int_{\partial B_1} |\nabla z_{j_k}(y)|^2\,|y_n|^a\,d\mathcal{H}^{n-1}(y)\,dt\\
  = &\rho^{n+1} \int_0^1 t^{n}\,dt \int_{\partial B_1} |\nabla z_{j_k}(y)|^2\,|y_n|^a\,d\mathcal{H}^{n-1}(y)\,dt\\
  \stackrel{ty=x}{=} &\rho^{n+1} \int_0^1 \int_{\partial B_t} |\nabla z_{j_k}(x)|^2\,|x_n|^a\,d\mathcal{H}^{n-1}(y)\,dt 
  = \rho^{n+1} \int_{B_1} |\nabla z_{j_k}|^2\,d\mu_a,
 \end{split}
\end{equation*}
and
\begin{equation*}
 \begin{split}
  \int_{B'_\rho}z_{j_k}\,\RR(h)(\wx)d\mathcal{H}^{n-1}
  = &\int_0^\rho \int_{\partial B'_t}z_{j_k}\,\RR(h)(\wx)d\mathcal{H}^{n-2}\\
  \stackrel{\wx=t\widehat{y}}{=}
  &\int_0^\rho t^{n-2}\,dt\int_{\partial B'_1}z_{j_k}(t\widehat{y},0)\,\lim_{\varepsilon\to 0}(t\varepsilon)^a\frac{\partial h}{\partial x_n}(t\widehat{y},t\varepsilon)d\mathcal{H}^{n-2}(\widehat{y})\\
  = &\frac{\rho^{n+1}}{n+1} \int_{\partial B'_1} z_{j_k}(\widehat{y},0)\,\RR(h)(\widehat{y})d\mathcal{H}^{n-2}(\widehat{y})
  = \rho^{n+1}\int_{B'_1}z_{j_k}\,\RR(h)(\wx)d\mathcal{H}^{n-1}
 \end{split}
\end{equation*}
where in the last equality we did the previous calculus again in reverse order.
Since $\nu_k(B_1)<\infty$ then $\nu_k(\partial B_\rho)=0$ with $\rho\in(0,1)\setminus I$ where $I$ is a set at the most countable. 
Thus
\begin{equation}\label{e:nuk(B1-B2)}
 \nu_k(B_{\rho_1}\setminus B_{\rho_2})\leq \Lambda_0 (\rho_1^{n+1}-\rho_2^{n+1})\leq c(n,\Lambda_0)(\rho_1-\rho_2),
\end{equation}
for all $0<\rho_1\leq\rho_2<1$ such that $\rho_1,\rho_2\in (0,1)\setminus I$.
Repeating the argument in \eqref{w-zinfty suppcomp} we prove the $\Gamma$-$\limsup$ inequality for function 
$w\in \mathcal{B}_\infty^{(2)}$ for which there exists some $\rho\in (0,1)$ 
such that $\{w\not\equiv z_\infty\}\subset\subset B_\rho$.
We extend $w$ on $\R^n$ as $z_\infty$ in $B_\rho^c$ and we indicate the extension by $w$ again.
We fix $\varepsilon>0$ and introduce the following auxiliary tools.\\ 
Due to the definition of $H^1(B_1,\mu_a)$ as $\overline{C^\infty(B_1)}^{\|\cdot\|_{H^1(B_1, \mu_a)}}$ 
(cf. \cite[Section~1.9 and Lemma~1.15]{HKM})
there exists a function $v_\d\in C^\infty(B_1)$ such that
\begin{equation}\label{e:vd densità}
 \|v^\d - w\|_{H^1(B_1,\mu_a)}<\d(\varepsilon) \qquad\qquad \textit{with $\d(\varepsilon)=o(\varepsilon)$.}
\end{equation}
Let $w^\varepsilon(x):=w(x-3\varepsilon e_{n-1})$ be the translated function along the direction $e_{n-1}$. 
Since $w\in \mathcal{B}_\infty^{(2)}$, we observe that
\begin{equation*}
 w^\varepsilon(x)=0 \quad\Longleftrightarrow\quad x-3\varepsilon e_{n-1} \in \{(\wx,0)\,\,:\,\,x_{n-1}\leq 0\} 
 \quad\Longleftrightarrow\quad x\in \{(\wx,0)\,\,:\,\,x_{n-1}\leq 3\varepsilon\}.
\end{equation*}
Let $I_\sigma$ be the set defined as
\begin{equation}\label{d:Ie}
  \begin{split}
   I_\sigma = \{x\in B_1\,\,:\,\, \mathrm{dist}(x,B_1^{',-})<\sigma\}
  \end{split}
\end{equation}
Let $\varphi_\varepsilon$ and $\chi_\varepsilon$ be two cut-off functions such that 
\begin{equation}\label{d:varphie e chie}
  \begin{split}
   \varphi_\varepsilon\in C^\infty_c(I_{3\varepsilon}), \quad \varphi_{\varepsilon\,\,|I_{2\varepsilon}}\equiv 1, \quad \|\nabla \varphi_\varepsilon\|_{L^\infty(B_1)}\leq \frac{C}{\varepsilon}\\
   \chi_\varepsilon\in C^\infty_c(B_{1-\varepsilon}), \quad \chi_{\varepsilon\,\,|B_{1-2\varepsilon}}\equiv 1, \quad \|\nabla \chi_\varepsilon \|_{L^\infty(B_1)}\leq \frac{C}{\varepsilon}.
  \end{split}
\end{equation}
For all $0<\varepsilon<<1$ we build the sequence of functions
\begin{equation*}
 w_k^{(\varepsilon)}:=\chi_\varepsilon (\varphi_\varepsilon w^\varepsilon + (1-\varphi_\varepsilon)v^\d) + (1-\chi_\varepsilon)z_{j_k}.
\end{equation*}
Then we can at once infer
\begin{equation*}
 w_k^{(\varepsilon)}\in z_{j_k} + W^{1,2}_0(B_1)
\end{equation*}
and since we can write 
\begin{equation*}
 w_k^{(\varepsilon)}+\theta_{j_k}h:=\chi_\varepsilon (\varphi_\varepsilon (v_\d+\theta_{j_k}h) + 
					(1-\varphi_\varepsilon)(w^\tau+\theta_{j_k}h)) + (1-\chi_\varepsilon)(z_{j_k}+\theta_{j_k}h),
\end{equation*}
we prove that $w_k^{(\varepsilon)}\in \mathcal{B}_{j_k}$: $w_k^{(\varepsilon)}$ is a convex combination of functions
$v_\d$, $w^\tau$ and $z_{j_k}$ with boundary data as $z_{j_k}$ and every addendum is bigger than $-\theta_{j_k}h$ restricted to $B'_1$.
In fact 
\begin{itemize}
 \item[(i)] by definition $z_{j_k}+\theta_{j_k}h\geq 0$ in $B'_1$;
 \item[(ii)] if $x\in \mathrm{supp}(\varphi_\varepsilon)\cap B'_1$ then $x_{n-1}<3\varepsilon$. 
	Thus $w^\varepsilon(x)=0$ then $\varphi_\varepsilon(x)(w^\tau(x)+\theta_{j_k}h(x))=\varphi_\varepsilon(x)\theta_{j_k}h(x)\geq 0$;
 \item[(iii)] if $x\in \mathrm{supp}(1-\varphi_\varepsilon)\cap B'_1$ then $x_{n-1}\geq 2\varepsilon$, so $h(\wx,0)>0$ and as
       $\theta_{j_k}\to +\infty$ $v^\d(x)+\theta_{j_k}h(x)\geq -\|v_\d\|_{L^\infty(B_1)}+\theta_{j_k}h(x)\geq0$ for $k>k_\d$.
\end{itemize}
So $w_k^{(\varepsilon)}\in \mathcal{B}_{j_k}$ for $k>k_\d$.\\
Next, consider, 
\begin{align*}
 J_k^\varepsilon &:= -4\theta_{j_k}\int_{B'_1} w_k^{(\varepsilon)} \RR(h)\,d\mathcal{H}^{n-1} \\
 I_k^\varepsilon &:=  \int_{B_1} |\nabla w_k^{(\varepsilon)}|^2\, d\mu_a,
\end{align*}
respectively the trace term and the volume term of the energy of $w_k^{(\varepsilon)}$.
By definition we have
\begin{equation*}
 \begin{split}
  J_k^\varepsilon \leq -4\theta_{j_k}\!\!\int_{B'_{1-\varepsilon}}\!\!\!\!\!\!\! (\varphi_\varepsilon w^\varepsilon + (1-\varphi_\varepsilon)v^\d) \RR(h)\,d\mathcal{H}^{n-1}
                       -4\theta_{j_k}\!\!\int_{B'_1\setminus B'_{1-2\varepsilon}}\!\!\!\!\!\!\!\!\!\! z_{j_k} \RR(h)d\mathcal{H}^{n-1} 
                       = J_k^{(1)}\!+\!J_k^{(2)}.
 \end{split} 
\end{equation*}
According to (i), \eqref{complementarity} and \eqref{e:nuk(B1-B2)} we deduce
\begin{equation}\label{e:stima Jk2}
 0\leq \sup_k J_k^{(2)}\leq \sup_k \nu_k(B_1\setminus B_{1-2\varepsilon}) \leq C\,2\varepsilon.
\end{equation}
Instead, due to (ii), the function $w^\varepsilon_{|B'_1\cap G_{3\varepsilon}}=0$ and
from definitions of $I_{2\varepsilon}$ and $h$ we have $\RR(h)_{|B'_{1-\varepsilon}\setminus I_{2\varepsilon}}=0$.
From this we infer 
\begin{equation}\label{e:stima Jk1}
 0\leq J_k^{(1)}\leq -4\theta_{j_k}\left(\int_{B'_{1-\varepsilon}\cap I_{3\varepsilon}} w^\varepsilon \RR(h)d\mathcal{H}^{n-1} 
	+ \int_{B'_{1-\varepsilon}\setminus I_{3\varepsilon}}v^\d \RR(h)d\mathcal{H}^{n-1}\right)=0.
 \end{equation}
Putting  \eqref{e:stima Jk2} and \eqref{e:stima Jk1} together yields
\begin{equation}
 \limsup_{k\to\infty} J_k^\varepsilon \leq C\varepsilon.
\end{equation}
In order to estimate the functional $I_k^\varepsilon$ we observe that
\begin{equation*}
\begin{split}
I_k^\varepsilon&\leq \int_{B_{1-2\varepsilon}}\!\!\!\!\! |\nabla (\varphi_\varepsilon w^\varepsilon + (1-\varphi_\varepsilon)\,v^\d)|^2\, d\mu_a
		   + c\int_{B_{1-\varepsilon}\setminus B_{1-2\varepsilon}}\!\!\!\!\!\!\!\!\!\!|\nabla(\varphi_\varepsilon w^\varepsilon +(1-\varphi_\varepsilon) v^\delta)|^2\, d\mu_a\\
		   &+ c \int_{B_{1}\setminus B_{1-2\varepsilon}}\!\!\!\!\!\!|\nabla z_{j_k}|^2\, d\mu_a
		   +\frac{c}{\varepsilon^2} \int_{B_{1-\varepsilon}\setminus B_{1-2\varepsilon}}\!\!\!\!\!\!|(\varphi_\varepsilon w^\varepsilon +(1-\varphi_\varepsilon) v^\d - z_{j_k}|^2\, d\mu_a
		   = I_k^{(1)} + I_k^{(2)} + I_k^{(3)} + I_k^{(4)}.
\end{split}
\end{equation*}
We estimate the four addenda separately.
From condition \eqref{e:nuk(B1-B2)}, we can infer
\begin{equation}\label{e:stima I3}
 \sup_k I_k^{(3)}\leq\sup_k \nu_k(B_{1}\setminus B_{1-2\varepsilon}) C\,\varepsilon.
\end{equation}
We now estimate the first term; recalling that ${\varphi_\varepsilon}_{|I_{3\varepsilon}^c}=0$
\begin{equation}\label{e:stima I1 prel}
 \begin{split}
  I_k^{(1)}=&\int_{B_{1-2\varepsilon}\setminus I_{3\varepsilon}}|\nabla v^\d|^2\,d\mu_a + \int_{B_{1-2\varepsilon}\cap I_{3\varepsilon}}\left|\nabla\left(\varphi_\varepsilon(w^\varepsilon-v^\d)\right)\nabla v^\d\right|^2\,d\mu_a\\
  \leq &\int_{B_{1-2\varepsilon}\setminus I_{3\varepsilon}}|\nabla v^\d|^2\, d\mu_a + c\int_{B_{1-2\varepsilon}\cap I_{3\varepsilon}}|\nabla v^\d|^2\, d\mu_a\\
      &+ \int_{B_{1-2\varepsilon}\cap I_{3\varepsilon}}|\nabla (w^\varepsilon-v^\d)|^2\, d\mu_a + \frac{c}{\varepsilon^2}\int_{B_{1-2\varepsilon}\cap I_{3\varepsilon}}|v^\d-w^\varepsilon|^2\, d\mu_a\\
      \leq& \int_{B_{1-2\varepsilon}\setminus I_{3\varepsilon}}|\nabla v^\d|^2\, d\mu_a 
	+ c\int_{B_{1-2\varepsilon}\cap I_{3\varepsilon}}|\nabla (v^\d - \nabla w)|^2\, d\mu_a\\
	  &+ c \int_{B_{1-2\varepsilon}\cap I_{3\varepsilon}} |\nabla w|^2\, d\mu_a
	      +\frac{c}{\varepsilon^2} \int_{B_{1-2\varepsilon}\cap I_{3\varepsilon}}(|v^\d-w|^2 + |w-w^\varepsilon|^2)\, d\mu_a.
 \end{split}
\end{equation}
Taking the last addendum above into account, we notice that for all $\phi$ smooth functions and~$\tau>0$
\begin{equation*}
 |\phi(x-\tau e_{n-1}) - \phi(x)|\leq \tau\int_0^1|\nabla \phi|(x-\tau t e_{n-1})\,dt.
\end{equation*}
Then, by a simple application of Fubini’s theorem we deduce
\begin{equation*}
 \frac{c}{\varepsilon^2} \int_{B_{1-2\varepsilon}\cap I_{3\varepsilon}}|\phi(x-\tau e_{n-1}) - \phi(x)|^2\, d\mu_a
 \leq c \frac{\tau^2}{\varepsilon^2} \int_{(B_{1-2\varepsilon}\cap I_{3\varepsilon})+[0,\tau]e_{n-1}}|\nabla \phi|^2 \, d\mu_a
\end{equation*}
where $(B_{1-2\varepsilon}\cap I_{3\varepsilon})+[0,\tau]e_{n-1}$ denotes the Minkowski sum between sets.
So, thanks to a density argument and for $\tau=3\varepsilon$ we infer
\begin{equation*}
 \frac{c}{\varepsilon^2} \int_{G_{2\varepsilon} \setminus G_{3\varepsilon}}|w-w^\varepsilon|^2\, d\mu_a
 \leq c  \int_{(B_{1-2\varepsilon}\cap I_{3\varepsilon})+[0,\tau]e_{n-1}}|\nabla w|^2 \, d\mu_a.
\end{equation*}
So, from \eqref{e:stima I1 prel}, according to 
\eqref{e:vd densità}, the continuity of translation in $L^2$ and the absolute continuity 
of the integral, and observing that $\mathcal{L}^n((B_{1-2\varepsilon}\cap I_{3\varepsilon})+[0,\tau]e_{n-1})=O(\varepsilon)$ 
we obtain
\begin{equation}\label{e:stima I1}
 \begin{split}
  I_k^{(1)}
	    \leq \int_{B_{1-2\varepsilon}\setminus I_{3\varepsilon}}|\nabla v_{\varepsilon}|^2\, d\mu_a + O(\varepsilon).   
 \end{split}
\end{equation}
Reasoning in the same way as in the estimate of $I_k^{(1)}$ we obtain
\begin{equation}\label{e:stima I2}
 I_k^{(2)}\leq O(\varepsilon)
\end{equation}
Since $\mathrm{supp}\varphi_\varepsilon\subset G_{2\varepsilon}$ and recalling that by condition \eqref{w-zinfty suppcomp}, 
if we choose $\varepsilon$ sufficiently small such that $\rho<1-5\varepsilon$, 
$\mathrm{supp}(w^{3\varepsilon} -z_\infty^{3\varepsilon})\subset B_{1-2\varepsilon}$, we obtain
\begin{equation*}
\begin{split}
 I_k^{(4)}&\leq \frac{c}{\varepsilon^2}\int_{B_{1-\varepsilon}\setminus B_{1-2\varepsilon}} |\varphi_\varepsilon(w^\varepsilon-v^\d)|^2\, d\mu_a
	  +\frac{c}{\varepsilon^2}\int_{B_{1-\varepsilon}\setminus B_{1-2\varepsilon}} |v^\d-z_{j_k}|^2\, d\mu_a\\
	  &\leq \frac{c}{\varepsilon^2}\int_{B_{1-\varepsilon}\setminus B_{1-2\varepsilon}} (|w^\varepsilon-w|^2+|w-v^\d|^2+|w-z_{j_k}|^2)\, d\mu_a
\end{split}
\end{equation*}
So, proceeding as in estimate of $I_k^{(1)}$ and recalling that $\mathrm{supp}(w-z_\infty)\subset B_\rho$ for $\varepsilon$ sufficiently small we deduce 
\begin{equation}\label{e:stima I4}
\begin{split}
\limsup_{k\to\infty} I_k^{(4)}
\leq \limsup_{k\to\infty} \frac{c}{\varepsilon^2}\int_{B_{1-\varepsilon}\setminus B_{1-2\varepsilon}} |z_\infty-z_{j_k}|^2\,d\mu_a + O(\varepsilon)
\leq O(\varepsilon).
\end{split}
\end{equation}
Then putting together estimates in \eqref{e:stima I3}, \eqref{e:stima I1}, \eqref{e:stima I2} and \eqref{e:stima I4} leads to
\begin{equation*}
 \begin{split}
 \limsup_{k\to\infty} I_k^\varepsilon \leq \int_{B_{1-2\varepsilon}\setminus I_{3\varepsilon}}|\nabla v_{\varepsilon}|^2\, d\mu_a 
												+O(\varepsilon).
\end{split}
\end{equation*}
So, since 
\begin{equation*}
 \begin{split}
  w^{(\varepsilon)}_k \xrightarrow{k\to\infty} \chi_\varepsilon (\varphi_\varepsilon v_{\d_\varepsilon}+ (\varphi_\varepsilon)w^{3\varepsilon}) + (1-\chi_\varepsilon) z_\infty =: w^{(\varepsilon)}\qquad \textit{in }\, L^2(B_1,\mu_a)
 \end{split}
\end{equation*}
and 
\begin{equation*}
 \begin{split}
  w^{(\varepsilon)} \xrightarrow{\varepsilon\to 0} w\qquad \textit{in }\, L^2(B_1,\mu_a),
 \end{split}
\end{equation*}
we conclude by the lower semicontinuity of the $\Gamma$-$\limsup$
\begin{equation*}
 \begin{split}
  \Gamma\mathit{-}\limsup_{k\to \infty} \GG_{j_k}(w)&\leq \liminf_{\varepsilon\to 0}\left(\Gamma\mathit{-}\limsup_{k\to \infty} \GG_{j_k}(w^{(\varepsilon)})\right)\\
  &\leq \limsup_{\varepsilon\to 0}\left(\limsup_{k\to \infty} (I_k^\varepsilon+J_k^{\varepsilon})\right)
  \leq \int_{B_1}|\nabla w|^2 \, d\mu_a,
 \end{split}
\end{equation*}
that provides the thesis.\\

\noindent\textit{Proof of the $\Gamma$-convergence: case (3).}\\ 
(a) From \eqref{quasiminimality z_j GG}, we immediately have
 \begin{equation*}
  \liminf_j \GG_j(w_j)\geq \liminf_j (1-\kappa_j)\GG_j(z_j) =+\infty= \GG_\infty^{(3)}.
 \end{equation*}
 (b) This is trivial, in fact $\liminf_j\GG_j(w_j)\leq+\infty= \GG_\infty^{(3)}$.

\textbf{Step 4:\,Improving the convergence of $(z_j)_j\in\N)$ if $\lim_j\GG_j(z_j)<+\infty$.}
Using a standard result of $\Gamma$-convergence we show that $z_j\to z_\infty$ in $H^1(B_1,\mu_a)$.\\
For equi-coercivity of $\GG_j$ seen in \eqref{cond_to_equicoerc_GGj}, \cite[Lemma 2.10]{CSS} (a version of Poincaré inequality for weighted Sobolev spaces)
and $\|z_j\|_{H^1(B_1,\mu_a)}=1$ we have
\begin{equation*}
 \|w\|_{H^1(B_1,\mu_a)}\leq C \sqrt{\GG_j(w)+1},
\end{equation*}
so every minimizing sequence converges weakly in $H^1(B_1,\mu_a)$ and thanks to Theorem~\ref{T:8.1 HK}
converges strongly in $L^2(B_1,\mu_a)$. Since $\GG_j$ is semicontinuous with respect to weak topology of $H^1(B_1,\mu_a)$ there exists $\zeta_j$
minimizer of $\GG_j$. Taking into account \cite[Theorem 7.8]{DalMaso},
with $i=1,2$ there exists $\zeta_\infty\in H^1(B_1,\mu_a)$ such that
\begin{align}
 &\zeta_j\to \zeta_\infty,\qquad\qquad \textit{in }\,L^2(B_1,\mu_a)\\ \label{GG_j(zeta_j)to}
 &\GG_j(\zeta_j)\to \GG^{(i)}_\infty(\zeta_\infty),\\ \label{uniqueness zeta_infty}
 &\zeta_\infty \,\,\text{is the unique minimizer of}\,\, \GG^{(i)}_\infty, 
\end{align}
where due to \eqref{uniqueness zeta_infty} we have used the strict convexity of $\GG^{(i)}_\infty$.
Therefore using the strong convergence of traces in $L^2(\partial B_1,|x_n|^a\mathcal{H}^{n-1})$ and $L^2(B_1')$, then from
the estimates
\begin{equation}\label{GG_j(zeta_j)<}
 \GG_j(\zeta_j)\leq \GG_j(z_j)\leq \sup_j \GG_j(z_j)<\infty,
\end{equation}
and \eqref{uniqueness zeta_infty} we obtain
\begin{equation*}
 \int_{B_1}|\nabla \zeta_j|^2\,d\mu_a\to \int_{B_1}|\nabla \zeta_\infty|^2\,d\mu_a,
\end{equation*}
which implies $\zeta_j \to \zeta_\infty$ in $H^1(B_1,\mu_a)$.
According to \eqref{quasiminimality z_j GG} and \eqref{GG_j(zeta_j)<}, $z_j$ is an almost minimizer of $\GG_j$ in the following sense 
\begin{equation*}
 0\leq \GG_j(z_j)-\GG_j(\zeta_j)\leq \kappa_j \GG_j(z_j)\leq \kappa_j \sup_j \GG_j(z_j). 
\end{equation*}
Since $\kappa_j\downarrow 0$ and $z_j \rightharpoonup z_\infty$ in $H^1(B_1,\mu_a)$, \eqref{GG_j(zeta_j)to} and Step $3$ yield that
\begin{equation}\label{e:GGinfty z<GGinfty zeta}
 \GG_\infty^{(i)}(z_\infty)\leq \liminf_j \GG_j(z_j)= \lim_j \GG_j(\zeta_j)= \GG_\infty^{(i)}(\zeta_\infty),
\end{equation}
with $i=1,2$. From \eqref{quasiminimality z_j GG}, we infer 
\begin{equation*}
 \GG_j(z_j)\leq \frac{1}{1-k_j} \GG_j(\zeta_j);
\end{equation*}
from this, by \eqref{e:GGinfty z<GGinfty zeta} and by strong convergence of traces we obtain
\begin{equation*}
 \liminf_j \int_{B_1}|\nabla z_j|^2\,d\mu_a= \int_{B_1}|\nabla z_\infty|^2\,d\mu_a,
\end{equation*}
that with the weak convergence of in $H^1(B_1,\mu_a)$ proves the convergence
\begin{equation*}
 z_j\to z_\infty\qquad\qquad \textit{in }\, H^1(B_1,\mu_a).
\end{equation*}
In particular 
\begin{equation}\label{||zinfty||=1}
 \|z_\infty\|_{H^1(B_1,\mu_a)}=1.
\end{equation}

\textbf{Step 5: Case (1) cannot occur.}
We recall properties of $z_\infty$:
\begin{itemize}
 \item[(i)] $\|z_\infty\|_{H^1(B_1,\mu_a)}=1$;
 \item[(ii)] $z_\infty$ is $(1+s)$-homogeneous and even with respect to $\{x_n=0\}$;
 \item[(iii)] $z_\infty$ is the unique minimizer of $\GG_\infty^{(1)}$ with respect to its boundary data;
 \item[(iv)] $z_\infty\in \mathcal{B}_\infty^{(1)}=\{z\in z_\infty+H_0^1(B_1,\mu_a) \,:\, (z + \theta h)|_{B_1'}\geq 0\}$.
\end{itemize}
These properties imply that
\begin{equation*}
 w_\infty:=z_\infty + \theta h
\end{equation*}
is the minimizer of $\int_{B_1} |\nabla\cdot|^2\,d\mu_a$ among all functions $w\in w_\infty + H^1_0(B_1,\mu_a)$ and $w|_{B_1'}\geq 0$
in the sense of the trace. So, $w_\infty$ is the solution of the fractional obstacle problem.
To prove this claim, for all $z\in \mathcal{B}_\infty^{(1)}$ we consider $w:=z +\theta h$ and, recalling \eqref{dGG}, we have
\begin{equation*}
 \begin{split}
  \GG_\infty^{(1)}(z)=&\int_{B_1}|\nabla w|^2\,d\mu_a - \theta^2\int_{B_1}|\nabla h|^2\,d\mu_a -(1+s)\int_{B_1}z_\infty^2\,|x_n|^a\,d\mathcal{H}^{n-1}\\
		      & -2\theta \int_{B_1} \nabla w\cdot\nabla h\,d\mu_a -4\theta \int_{B'_1}z\,\lim_{\varepsilon\to 0} \left(\varepsilon^a\,\frac{\partial h}{\partial x_n}(\wx,\varepsilon)\right)\,d\mathcal{H}^{n-1}\\
		      \stackrel{\eqref{dGG}}{=} &\int_{B_1}\!\!\!|\nabla w|^2\,d\mu_a - \theta^2\int_{B_1}\!\!\!|\nabla h|^2\,d\mu_a -(1+s)\int_{B_1}\!\!\!z_\infty^2\,|x_n|^a\,d\mathcal{H}^{n-1}
		       -2(1+s)\int_{\partial B_1}\!\!\!\!\! z_\infty\, h\,|x_n|^a\, d\mathcal{H}^{n-1}.
 \end{split}
\end{equation*}
Since $\GG_\infty^{(1)}(z_\infty)\leq \GG_\infty^{(1)}(z)$ for all $z\in \mathcal{B}_\infty^{(1)}$ then 
\begin{equation*}
 \int_{B_1} |\nabla w_\infty|^2\,d\mu_a\leq \int_{B_1} |\nabla w|^2\,d\mu_a \qquad\qquad \forall w\in w_\infty + H^1_0(B_1,\mu_a).
\end{equation*}
Using the $(1+s)$-homogeneity and \cite[Proposition 5.5]{CSS},  
the result of classification of global solutions, we deduce that 
$w_\infty=\l_\infty h_{\nu_\infty}\in \mathfrak{H}_{1+s}$ for some $\l_\infty\geq 0$ and $\nu_\infty\in\mathbb{S}^{n-2}$.\\

Thanks to \eqref{psi_j-c_j=dist(c_j,H_1+s)} we have the contradiction: from $z_j \rightharpoonup z_\infty$ in $H^1(B_1,\mu_a)$
and \eqref{def z_j} we have
\begin{equation}\label{cj/dj}
 \frac{c_j}{\d_j}=\theta_j h + z_j \to \theta h + z_\infty\in \mathfrak{H}_{1+s} \qquad\qquad \textit{in }\,H^1(B_1,\mu_a),
\end{equation}
so for $j>>1$ 
\begin{equation*}
 \mathrm{dist}_{H^1(B_1,\mu_a)}(c_j,\mathfrak{H}_{1+s})\leq \|c_j-\d_j\l_\infty h_{\nu_\infty}\|_{H^1(B_1,\mu_a)}
 \!\!\!\stackrel{\eqref{cj/dj}}{=}\!\!\! o(\d_j)<\d_j=\mathrm{dist}_{H^1(B_1,\mu_a)}(c_j,\mathfrak{H}_{1+s})
\end{equation*}
where we have used that $\d_j\l_\infty h_{\nu_\infty}\in\mathfrak{H}_{1+s}$.

\textbf{Step 6: Case (3) cannot occur.}
To prove that case (3) cannot occur, we conveniently scale  the energies so as to get a nontrivial $\Gamma$-limit for the rescaled
functionals ultimately leading to a contradiction.

By means \eqref{lim GGj=+infty}, since $\lim_j\GG_j(z_j)=+\infty$, we have
\begin{equation}\label{def gammaj}
 \gamma_j:= -4\theta_j\int_{B_1'}z_j\, \RR(h)(\wx)\,d\mathcal{H}^{n-1}\uparrow +\infty.
\end{equation}
Moreover $z_j\to z_\infty$ in $L^2(B_1')$ and \eqref{-wdh>0} give us
\begin{equation*}
 \lim_j\frac{\gamma_j}{\theta_j} = -4\lim_j \int_{B_1'}z_j\, \RR(h)\,d\mathcal{H}^{n-1}
 = -4\,\int_{B_1'}z_\infty\, \RR(h)\,d\mathcal{H}^{n-1}\in [0,\infty)
\end{equation*}
so 
\begin{equation}\label{thetajgammaj-1/2}
 \theta_j \gamma_j^{-1/2}\uparrow +\infty.
\end{equation}
Then we rescale the functional $\GG_j$ dividing by $\gamma_j$. For all $z\in\mathcal{B}_j$ we consider $\gamma_j^{-1}\GG_j(z)$ and we note that
\begin{equation}\label{tilde GGj}
 \gamma_j^{-1}\GG_j(z)=\widetilde{\GG_j}(\gamma_j^{-1/2}z)
\end{equation}
with
\begin{equation*}
   \widetilde{\GG_j}(w)= \left\{  \begin{array}{ll}
			  \displaystyle\int_{B_1}\!\!\!\!|\nabla w|^2\,d\mu_a -(1+s)\int_{\partial B_1}\!\!\!\!\!\!\!w^2\,|x_n|^a\,d\mathcal{H}^{n-1}-4\frac{\theta_j}{\gamma_j^{1/2}}\int_{B_1'}\!\!\!\!w\,\RR(h)\,d\mathcal{H}^{n-1} &  w\in \widetilde{\mathcal{B}}_j\\
			 +\infty 	 &\!\!\!\!\!\!\!\!\!\textit{otherwise},
			\end{array} \right.
\end{equation*}
where 
\begin{equation*}
 \widetilde{\mathcal{B}}_j:= \{w\in \gamma_j^{-1/2}\,z_j+H^1_0(B_1,\mu_a)\,\,:\,\, (w+\theta_j \gamma_j^{-1/2}h)|_{B_1'}\geq 0\}.
\end{equation*}
Setting $\widetilde{z}_j:=\gamma_j^{-1/2}z_j$, due to \eqref{|z_j|=1} and $\gamma_j\uparrow +\infty$, we have 
$\widetilde{z}_j\to 0$ in $H^1(B_1,\mu_a)$. Moreover the condition \eqref{tilde GGj} and the definition of 
$\gamma_j$ \eqref{def gammaj} yield
\begin{equation}\label{tilde GGj = 1+O(gammaj)}
 \widetilde{\GG_j}(\widetilde{z}_j)=\frac{\int_{B_1}|\nabla w|^2\,d\mu_a -(1+s)\int_{\partial B_1}w^2\,|x_n|^a\,d\mathcal{H}^{n-1}}{\gamma_j}+1=1+O(\gamma_j^{-1}).
\end{equation}
Thanks to \eqref{tilde GGj} we can rewrite the inequalities \eqref{quasiminimality z_j GG} as
\begin{equation*}
 (1-\kappa_j)\widetilde{\GG_j}(\widetilde{z}_j)\leq \widetilde{\GG_j}(\widetilde{z}) \qquad\qquad \forall \widetilde{z}\in \widetilde{\mathcal{B}}_j.
\end{equation*}
In particular, by taking into consideration \eqref{thetajgammaj-1/2}, $\widetilde{z_j}\to 0$ in $H^1(B_1,\mu_a)$, and \eqref{tilde GGj = 1+O(gammaj)} 
(in other words $\lim_j \widetilde{\GG_j}(\widetilde{z}_j)<\infty$) we proceed as in case (2) of Step $3$ establishing that
\begin{equation*}
 \Gamma(L^2(B_1,\mu_a))\text{-}\lim_j \widetilde{\GG_j}= \widetilde{\GG_\infty},
\end{equation*}
with
\begin{equation*}
   \widetilde{\GG_\infty}(\widetilde{z})= \left\{  \begin{array}{ll}
			  \displaystyle\int_{B_1}|\nabla \widetilde{z}|^2\,d\mu_a  & \qquad\qquad  w\in \widetilde{\mathcal{B}}_\infty\\
			 +\infty 	 &    	\qquad\qquad \text{otherwise},
			\end{array} \right.
\end{equation*}
 where $\widetilde{\mathcal{B}}_\infty=\{\widetilde{z}\in H^1_0(B_1,\mu_a)\,\,:\,\, \widetilde{z}|_{B_1'}=0\}$.
From Step $4$ and the convergence $\widetilde{z}_j\to 0$ in $H^1(B_1,\mu_a)$, the zero function turns out to be the unique minimizer of $\widetilde{\GG_\infty}$
and $\lim_j\widetilde{\GG_j}(\widetilde{z}_j)\to~\widetilde{\GG_\infty}(0)=~\!0$; this is in contradiction with \eqref{tilde GGj = 1+O(gammaj)}.\\

To prove the theorem we have only to exclude case (2) of Step $3$. 
In what follows, we suppose the hypothesis of case (2) of Step $3$: $\theta=+\infty$ and $\lim_j \GG_j(z_j)<+\infty$.
In the following steps we exhibit further properties of the limit $z_\infty$.

\textbf{Step 7: An orthogonality condition.}
By evaluating that $\psi_j$ is a point of minimal distance between $c_j$ and $\mathfrak{H}_{1+s}$, we prove that $z_\infty$ is 
orthogonal to the tangent space $T_h\mathfrak{H}_{1+s}$:

From the hypothesis $\theta=+\infty$ we deduce that $\l_j>0$ for $j>>1$. Therefore, by the condition of minimal distance \eqref{psi_j-c_j=dist(c_j,H_1+s)},
we deduce that for all $\nu\in\mathbb{S}^{n-2}$ and $\l\geq 0$, 
\begin{equation*}
 \|c_j-\psi_j\|_{H^1(B_1,\mu_a)}\leq \|c_j-\l\,h_\nu\|_{H^1(B_1,\mu_a)},
\end{equation*}
and thanks to definition of $z_j$ in \eqref{def z_j} it holds
\begin{equation*}
 \d_j\|z_j\|_{H^1(B_1,\mu_a)}\leq \|\psi_j-\l\,h_\nu+\d z_j\|_{H^1(B_1,\mu_a)}
\end{equation*}
or in the same way
\begin{equation}\label{step7 psi_j-lhnu ^2<}
 -\|\psi_j-\l\,h_\nu\|^2_{H^1(B_1,\mu_a)}\leq 2\d_j \langle z_j, \psi_j-\l h_\nu \rangle_{H^1(B_1,\mu_a)}.
\end{equation}
Now we suppose $(\l,\nu)\neq (\l_j,e_{n-1})$ and renormalizing \eqref{step7 psi_j-lhnu ^2<} we obtain
\begin{equation*}
 -\|\psi_j-\l\,h_\nu\|_{H^1(B_1,\mu_a)}\leq 2\d_j \langle z_j, \frac{\psi_j-\l h_\nu}{\|\psi_j-\l\,h_\nu\|_{H^1(B_1,\mu_a)}} \rangle_{H^1(B_1,\mu_a)}
\end{equation*}
and by passing to the limit $(\l,\nu)\to(\l_j,e_{n-1})$, reminding the definition of tangent space $T\mathfrak{H}_{1+s}$ in \eqref{def piano tangente},
we deduce
\begin{equation}\label{<z_j,zeta>geq0}
 \langle z_j,\zeta \rangle\geq 0 \qquad\qquad \zeta\in T_{\psi_j}\mathfrak{H}_{1+s}=T_h\mathfrak{H}_{1+s},
\end{equation}
where we used $\l_j>0$ in the computation of the tangent vector.
By choosing the sequence $(\l,\nu)\to(\l_j,e_{n-1})$ such that $\lim \frac{\psi_j-\l h_\nu}{\|\psi_j-\l\,h_\nu\|_{H^1(B_1,\mu_a)}}=-\zeta$
we obtain $\langle z_j,\zeta \rangle\leq 0$ thus 
\begin{equation}\label{<z_j,zeta>geq0}
 \langle z_j,\zeta \rangle= 0 \qquad\qquad \zeta\in T_h\mathfrak{H}_{1+s}. 
\end{equation}
So, taking the limit $j\to +\infty$ we conclude
\begin{equation}\label{<z_j,zeta>=0}
 \langle z_\infty,\zeta \rangle= 0 \qquad\qquad \zeta\in T_h\mathfrak{H}_{1+s}. 
\end{equation}

\textbf{Step 8: Identification of $z_\infty$ in case (2).}
There exist real constants $a_0, \dots, a_{n-2}$ such that
\begin{equation}\label{structure zinfty}
 z_\infty = a_0 h + \left(\sum_{i=1}^{n-2}a_i x_i\right) \left(\sqrt{x_{n-1}^2+x_n^2}+x_{n-1}\right)^s, 
\end{equation}
or rather $z_\infty\in T_h \mathfrak{H}_{1+s}$.

In view of homogeneity and regularity of $z_\infty$ (that is solution of a partial differential equation), 
fixed $x_{n-1}$ and $x_n$, we can write the first order Taylor polynomial 
of $z_\infty(\cdot,x_{n-1},x_n)$ in $(\underline{0}',x_{n-1},x_n)$. 
Thanks to a bidimensional argument we achieve the structure of $z_\infty$ stated in \eqref{structure zinfty}.
For its proof we refer to \cite[Lemma A.3]{GPPS} (for the reader's convenience we report a proof in Appendix).

\textbf{Step 9: Case (2) cannot occur.}
We use results of Step 4, 7 and 8 to deduce the contradiction.

From \eqref{structure zinfty} we deduce that $z_\infty\in T_h\mathfrak{H}_{1+s}$, by using it as a test function in 
\eqref{<z_j,zeta>=0}, the condition of orthogonality of Step 7 implies
\begin{equation*}
 \langle z_\infty,\zeta \rangle= 0 \qquad\qquad \zeta\in T_h\mathfrak{H}_{1+s}. 
\end{equation*}
Then we have $z_\infty=0$ but this is in contradiction with \eqref{||zinfty||=1}.

In this way we exclude the occurrence of case (2) of Step 3, thus providing the conclusion of the proof of the theorem.
\end{proof}

In what follows we show some important consequences of epiperimetric inequality.

\subsection{Decay of the boundary adjusted energy}\label{s:decay}
The following proposition establishes a decay estimate for the boundary adjusted energy.
In this connection the epiperimetric inequality allows us to estimate from below, up to a constant, the difference between 
the energy $W^{\underline{0}}_{1+s}(1,\cdot)$ evaluated respectively in the $(1+s)$-homogeneous extension of $u_{r|\partial B_1}$ and in $u_r$ 
with $W^{\underline{0}}_{1+s}(1,u_r)$; in this way we obtain a differential inequality from which we deduce the decay estimate.

\begin{prop}[Decay of the boundary adjusted energy]\label{p:decay}
Let $x_0\in \Gamma_{1+s}(u)$.
 There exists a constant $\gamma >0$ for which the following property holds:\\
 for every compact set $K\subset B_1'$ there exists a positive constant $C>0$ such that 
 \begin{equation}\label{e:decay W}
  W_{1+s}^{x_0}(r,u)\leq C\,r^\gamma,
 \end{equation}
for all radii $0<r<\mathrm{dist}(K,\partial B_1)$ and for all $x_0\in \Gamma_{1+s}(u)\cap K$.
\end{prop}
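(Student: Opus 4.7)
\noindent\emph{Plan.} The strategy is to combine the epiperimetric inequality with Weiss' monotonicity formula (Proposition~\ref{p:Weiss 1+s}) to derive a differential inequality of the form $r\,W'(r)\geq \gamma\,W(r)$, which upon integration yields the polynomial decay. Fix $x_0\in \Gamma_{1+s}(u)\cap K$ and set $u_r:=u_{x_0,r}$, $W(r):=W_{1+s}^{x_0}(r,u)=W_{1+s}^{\underline{0}}(1,u_r)$. Introduce the $(1+s)$-homogeneous extension $c_r(x):=|x|^{1+s}u_r(x/|x|)$ of the boundary trace; since $u_r\geq 0$ on $B'_1$ and is even in $x_n$, the same holds for $c_r$, so $c_r$ satisfies the hypotheses of Theorem~\ref{t:epiperimetric inequality}. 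Moreover $u_r\in \mathfrak{A}_{c_r}$ and it minimizes $W_{1+s}^{\underline{0}}(1,\cdot)$ on this class: the boundary term in $W$ depends only on the (fixed) trace, while $u_r$ minimizes the Dirichlet energy among admissible competitors with that trace. Thus the epiperimetric inequality gives
\begin{equation}\label{e:plan epip}
 W(r)\leq (1-\kappa)\,W_{1+s}^{\underline{0}}(1,c_r).
\end{equation}

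The crucial step is to establish the identity
\begin{equation}\label{e:plan id}
 (n+1)\,\bigl[W_{1+s}^{\underline{0}}(1,c_r)-W(r)\bigr]=\int_{\partial B_1}\bigl(\partial_\nu u_r-(1+s)u_r\bigr)^2|x_n|^a\,d\mathcal{H}^{n-1}=\frac{r\,W'(r)}{2},
\end{equation}
whose second equality is precisely Proposition~\ref{p:Weiss 1+s}. For the first equality I would compute $\int_{B_1}|\nabla c_r|^2\,d\mu_a$ in polar coordinates (the $(1+s)$-homogeneity of $c_r$ together with the cancellation $a+2s=1$ collapses it to a boundary integral with prefactor $\tfrac{1}{n+1}$), invoke the Rellich formula (Proposition~\ref{Rellich formula}) to rewrite $\int_{\partial B_1}|\nabla_\tau u_r|^2|x_n|^a$, and apply the divergence theorem (Theorem~\ref{t:divergence theorem fractional}) together with the complementarity $u_r\,\RR(u_r)=0$ on $B'_1$ to identify $\int_{B_1}|\nabla u_r|^2\,d\mu_a=\int_{\partial B_1}u_r\,\partial_\nu u_r\,|x_n|^a$. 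Decomposing $\partial_\nu u_r=(1+s)u_r+(\partial_\nu u_r-(1+s)u_r)$ on $\partial B_1$ and collecting terms, the coefficients of $\int_{\partial B_1}u_r^2|x_n|^a$ cancel exactly, leaving \eqref{e:plan id}.

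Combining \eqref{e:plan epip} with \eqref{e:plan id} yields $\kappa\,W_{1+s}^{\underline{0}}(1,c_r)\leq r\,W'(r)/[2(n+1)]$, and inserting this back into \eqref{e:plan epip} produces the ODE $\gamma\,W(r)\leq r\,W'(r)$ with $\gamma:=\tfrac{2(n+1)\kappa}{1-\kappa}>0$, valid wherever $W(r)>0$. Hence $r\mapsto r^{-\gamma}W(r)$ is nondecreasing there, and evaluation at $r\leq r_0:=\mathrm{dist}(K,\partial B_1)$ (the degenerate case $W(r_0)=0$ being trivial by the monotonicity of $W$) gives $W(r)\leq (W(r_0)/r_0^\gamma)\,r^\gamma$. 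The main obstacle is the identity \eqref{e:plan id}: Cauchy--Schwarz applied directly to $W(r)=\int_{\partial B_1}u_r(\partial_\nu u_r-(1+s)u_r)|x_n|^a$ only yields $W(r)^2\leq C\,r\,W'(r)$, which integrates to a logarithmic — not polynomial — decay; the exact cancellation in \eqref{e:plan id}, specific to the weight $|x_n|^a$ through the relation $a+2s=1$, is what upgrades the epiperimetric improvement into a power-type rate.
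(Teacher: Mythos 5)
Your proposal is correct and is essentially the paper's argument, reorganized. The paper computes $\frac{d}{dr}W^{\underline{0}}_{1+s}(r,u)$ from scratch via the $H_a'$, $D_a'$ formulas of Lemma~\ref{l:H' e D'}, arriving at $W'(r)=\frac{n+1}{r}\bigl(W_{1+s}^{\underline{0}}(1,c_r)-W_{1+s}^{\underline{0}}(1,u_r)\bigr)+\frac1r\int_{\partial B_1}(\partial_\nu u_r-(1+s)u_r)^2|x_n|^a\,d\mathcal{H}^{n-1}$, and only then invokes Proposition~\ref{p:Weiss 1+s} to double the coefficient; you instead take the Weiss formula for $W'(r)$ as given and compute the energy gap $W_{1+s}^{\underline{0}}(1,c_r)-W_{1+s}^{\underline{0}}(1,u_r)=\frac1{n+1}\int_{\partial B_1}(\partial_\nu u_r-(1+s)u_r)^2|x_n|^a\,d\mathcal{H}^{n-1}$ directly from polar coordinates, Rellich (Proposition~\ref{Rellich formula}), and the divergence theorem with complementarity. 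The two routes are algebraically equivalent and use the same ingredients, and your cancellation via $a-3=-2(1+s)$ (i.e.\ $a+2s=1$), which produces the perfect square, is exactly what is hidden in the paper's intermediate rearrangement. Your $\gamma=\tfrac{2(n+1)\kappa}{1-\kappa}$ also fixes a typo in the paper, where the spurious $r$ appears in the denominator of $\gamma$.
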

\begin{proof}
 Let us assume $x_0=\underline{0}\in\Gamma_{1+s}(u)$. 
 Thanks to Lemma \ref{l:H' e D'}, we calculate the derivative of  the boundary adjusted energy $W_{1+s}(\cdot,u)$
 \begin{equation}\label{e:deacy d/dr W inizio}
  \begin{split}
   \frac{d}{dr}&W^{\underline{0}}_{1+s}(r,u)
   = -\frac{(n+1)}{r^{n+2}}D_a(r)  + \frac{1}{r^{n+1}}D_a'(r) -\frac{(1+s)}{r^{n+2}}H_a'(r) + \frac{(1+s)(n+2)}{r^{n+3}}H_a(r)\\\
   &= -\frac{(n+1)}{r^{n+2}}D_a(r)  + \frac{1}{r^{n+1}}D_a'(r) -\frac{(1+s)(n-2s)}{r^{n+3}}H_a(r)
   - \frac{2(1+s)}{r^{n+2}}D_a(r) + \frac{(1+s)(n+2)}{r^{n+3}}H_a(r)\\
   &= -\frac{n+1}{r} W^{\underline{0}}_{1+s}(r,u) -\frac{(1+s)(n+1)}{r^{n+3}}H_a(r) + \frac{1}{r^{n+1}}D_a'(r)
   - \frac{2(1+s)}{r^{n+2}}D_a(r)  + \frac{2(1+s)^2}{r^{n+3}}H_a(r)\\
   &= -\frac{n+1}{r} W^{\underline{0}}_{1+s}(r,u) -\frac{(1+s)(n+1)}{r^{n+3}}H_a(r) + I.
   \end{split}
 \end{equation}
According to Lemma \ref{l:H' e D'} and to the definition of rescaled functions \eqref{d:ur 1+s}, we can write
\begin{equation}\label{e:decay I}
 \begin{split}
I=&\frac{1}{r^{n+2}}\int_{\partial B_r}\!\!|\nabla u|^2\,|x_n|^a\,d\mathcal{H}^{n-1} 
      + \frac{2(1+s)^2}{r^{n+3}}\int_{\partial B_r}\!\!u^2\,|x_n|^a\,d\mathcal{H}^{n-1} 
      -\frac{2(1+s)}{r^{n+2}}\int_{\partial B_r}\!\! u\nabla u\cdot \frac{x}{r}\,|x_n|^a\,d\mathcal{H}^{n-1}\\
   \stackrel{x=ry}{=}
   & \frac{1}{r} \int_{\partial B_1} \left(|\nabla u_r|^2 + 2(1+s)^2u_r^2 -2(1+s)u_r\nabla u_r\cdot y\right)\,|y_n|^a\,d\mathcal{H}^{n-1}\\
   =& \frac{1}{r} \int_{\partial B_1} \left(\left(\nabla u_r\cdot \nu -(1+s)u_r\right)^2 +|\nabla_\theta u_r|^2 + (1+s)^2u_r^2\right)\,|y_n|^a\,d\mathcal{H}^{n-1} 
 \end{split} 
\end{equation}
where by $\nabla_\theta u_r$ we denote the differential of $u_r$ in the tangent direction to $\partial B_1$.
Let $c_r$ be the $(1+s)$-homogeneous extension of $u_{r|\partial B_1}$
\begin{equation*}
 c_r(x):=|x|^{1+s}u_r\left(\frac{x}{|x|}\right).
\end{equation*}
Thus, according to $(1+s)$-homogeneity and by Euler's homogeneous function Theorem and recalling that $H_a(r)=r^{n+2}H_a(1)$ 
and $W^{\underline{0}}_{1+s}(1,u_r)=W^{\underline{0}}_{1+s}(r,u)$,
by putting together the equations \eqref{e:deacy d/dr W inizio} and \eqref{e:decay I}, 
we deduce
\begin{equation*}
\begin{split}
 \frac{d}{dr}&W^{\underline{0}}_{1+s}(r,u)= -\frac{n+1}{r} W^{\underline{0}}_{1+s}(r,u) -\frac{(n+1)(1+s)}{r}\int_{\partial B_1} u_r^2\,|x_n|^a\,d\mathcal{H}^{n-1}\\
  &+ \frac{1}{r}\int_{\partial B_1}\left(\nabla u_r\cdot \nu -(1+s)u_r\right)^2\,|x_n|^a\,d\mathcal{H}^{n-1}
    +\frac{1}{r}\int_{\partial B_1}\left(|\nabla_\theta u_r|^2+(1+s)^2u_r^2\right)\,|x_n|^a\,d\mathcal{H}^{n-1}\\
  =& -\frac{n+1}{r} W^{\underline{0}}_{1+s}(r,u)  + \frac{1}{r}\int_{\partial B_1}\left(\nabla u_r\cdot \nu -(1+s)u_r\right)^2\,|x_n|^a\,d\mathcal{H}^{n-1}\\ 
      &+\frac{1}{r}\int_{\partial B_1}\left(|\nabla_\theta c_r|^2-(1+s)(n-s)c_r^2\right)\,|x_n|^a\,d\mathcal{H}^{n-1}\\
  =&-\frac{n+1}{r} W^{\underline{0}}_{1+s}(r,u) + \frac{1}{r}\int_{\partial B_1}\left(\nabla u_r\cdot \nu -(1+s)u_r\right)^2\,|x_n|^a\,d\mathcal{H}^{n-1}\\ 
      &+\frac{1}{r}\int_{\partial B_1}\left(|\nabla c_r|^2-(1+s)(n+1)c_r^2\right)\,|x_n|^a\,d\mathcal{H}^{n-1}\\
 =&\frac{n+1}{r}W^{\underline{0}}_{1+s}(1,c_r)-\frac{n+1}{r} W^{\underline{0}}_{1+s}(1,u_r)  + \frac{1}{r}\int_{\partial B_1}\left(\nabla u_r\cdot \nu -(1+s)u_r\right)^2\,|x_n|^a\,d\mathcal{H}^{n-1}.
\end{split}
\end{equation*}
So, by Proposition \ref{p:Weiss 1+s} we have
\begin{equation*}
 \frac{d}{dr}W^{\underline{0}}_{1+s}(r,u)=2\,\frac{n+1}{r}\left(W^{\underline{0}}_{1+s}(1,c_r)-W^{\underline{0}}_{1+s}(1,u_r)\right).
\end{equation*}
 Then, according to 
 the epiperimetric inequality proved in Theorem \ref{t:epiperimetric inequality}, 
 and recalling that $u_r$ minimize $W^{\underline{0}}_{1+s}(1,\cdot)$
 we obtain
\begin{equation*}
 \frac{d}{dr}W^{\underline{0}}_{1+s}(r,u)\geq \frac{2\,(n+1)\kappa}{r(1-\kappa)}W^{\underline{0}}_{1+s}(1,u_r)=\frac{2\,(n+1)\kappa}{r(1-\kappa)}W^{\underline{0}}_{1+s}(r,u),
\end{equation*} 
 and integrating this inequality in $(0,r_0)$ we have
 \begin{equation*}
  W^{\underline{0}}_{1+s}(r,u)\leq W^{\underline{0}}_{1+s}(1,u)\,r^\gamma,
 \end{equation*}
with $\gamma:=\frac{2\,(n+1)\kappa}{r(1-\kappa)}$.
\end{proof}

\begin{oss}
 In order to prove the Proposition~\ref{p:decay} the Weiss' monotonicity formula is not necessary.
\end{oss}

\subsection{Nondegeneracy of the solution}\label{s:3nondegeneration}
In order to deduce the nondegeneracy property of the solution we note that 
the inequality \eqref{e:H(r)>r eps} is not enough. 
We state an improved version of \eqref{e:H(r)>r eps}; 
this is a consequence of epiperimetric inequality and decay estimate of energy above.

\begin{prop}[Nondegeneracy]\label{p:non degeneracy}
 Let $\uu\in H^1(B_1,\mu_a)$ be a solution of the Problem \eqref{POF'}. Let us assume that $\underline{0}\in \Gamma_{1+s}(\uu)$ .
 Then there exists a constant $H_0>0$ for which
 \begin{equation}\label{e:non degeneracy}
  H_a(r)\geq H_0\,r^{n+2} \qquad\qquad \forall 0<r<1.
 \end{equation}
\end{prop}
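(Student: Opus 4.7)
The strategy is to combine the monotonicity of $r\mapsto g(r):=H_a(r)/r^{n+2}$ from Lemma~\ref{l:H e N}(i) with the decay of the boundary adjusted energy from Proposition~\ref{p:decay}. The key identity, which follows by differentiating the definition of $W^{\underline{0}}_{1+s}$ (compare equation~\eqref{e:d/dr H/r^n+2} in the proof of Lemma~\ref{l:H e N}), is
\begin{equation*}
 \frac{d}{dr}g(r)=\frac{d}{dr}\left(\frac{H_a(r)}{r^{n+2}}\right)=\frac{2}{r}\,W^{\underline{0}}_{1+s}(r,u).
\end{equation*}
Since $\underline{0}\in\Gamma_{1+s}(u)$, Proposition~\ref{p:decay} provides a constant $C>0$ and $\gamma>0$ such that $W^{\underline{0}}_{1+s}(r,u)\le C r^{\gamma}$ for $r\in(0,1)$.

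Integrating the identity above on $(r,r_0)$ with $0<r<r_0<1$ yields
\begin{equation*}
 g(r_0)-g(r)=\int_r^{r_0}\frac{2\,W^{\underline{0}}_{1+s}(t,u)}{t}\,dt\le \int_0^{r_0} 2C\,t^{\gamma-1}\,dt=\frac{2C}{\gamma}\,r_0^{\gamma},
\end{equation*}
so that $g(r)\ge g(r_0)-\frac{2C}{\gamma}r_0^{\gamma}$ for every $r\in(0,r_0)$. Thus the proof reduces to producing a radius $r_0\in(0,1)$ for which
\begin{equation}\label{e:plan goal}
 g(r_0)>\frac{2C}{\gamma}\,r_0^{\gamma},\qquad\text{i.e.,}\qquad H_a(r_0)>\frac{2C}{\gamma}\,r_0^{\,n+2+\gamma}.
\end{equation}

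This existence step is the main obstacle, and it is resolved by the quantitative lower bound in Lemma~\ref{l:H e N}(ii). Indeed, suppose for contradiction that \eqref{e:plan goal} fails for every $r_0\in(0,1)$, i.e.\ $H_a(r)\le \frac{2C}{\gamma}r^{n+2+\gamma}$ for all $r\in(0,1)$. Pick any $\varepsilon\in(0,\gamma)$; by Lemma~\ref{l:H e N}(ii) there exists $\bar r(\varepsilon)$ and a positive constant $c(\varepsilon)$ such that $H_a(r)\ge c(\varepsilon)\,r^{n+2+\varepsilon}$ for $r<\bar r(\varepsilon)$. Combining these two bounds gives $c(\varepsilon)\,r^{n+2+\varepsilon}\le\frac{2C}{\gamma}r^{n+2+\gamma}$, i.e.\ $r^{\gamma-\varepsilon}\ge \gamma c(\varepsilon)/(2C)$ for all sufficiently small $r$, which is absurd since $\gamma-\varepsilon>0$.

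Fix then an $r_0\in(0,1)$ satisfying \eqref{e:plan goal} and set $H_0:=g(r_0)-\frac{2C}{\gamma}r_0^{\gamma}>0$. For $r\in(0,r_0)$ the integration step above gives $g(r)\ge H_0$. For $r\in[r_0,1)$ the monotonicity of $g$ in Lemma~\ref{l:H e N}(i) yields $g(r)\ge g(r_0)>H_0$. In both cases $H_a(r)\ge H_0\,r^{n+2}$, which is \eqref{e:non degeneracy}.
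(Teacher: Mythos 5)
Your proof is correct, and it follows essentially the same approach as the cited reference \cite[Proposition~4.6]{FS} (which the paper defers to without reproducing the argument): integrate the identity $\frac{d}{dr}\bigl(H_a(r)/r^{n+2}\bigr)=\frac{2}{r}W^{\underline{0}}_{1+s}(r,u)$, use the decay $W^{\underline{0}}_{1+s}(r,u)\le Cr^{\gamma}$ from Proposition~\ref{p:decay}, and use Lemma~\ref{l:H e N}(ii) together with the monotonicity of Lemma~\ref{l:H e N}(i) to produce a positive lower bound for $H_a(r)/r^{n+2}$. One small point worth making explicit: $c(\varepsilon)>0$ in your contradiction step relies on $H_a(\bar r(\varepsilon))>0$, which holds because $\underline{0}\in\Gamma_{1+s}(u)$ forces $H_a(r)>0$ for small $r$ (otherwise the frequency would be ill-defined and $\underline{0}$ would be interior to the contact set), and hence for all $r\in(0,1)$ by the monotonicity of $H_a(r)/r^{n+2}$.
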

\begin{proof}
For the proof of this result we refer to \cite[Proposition 4.6]{FS}.
\end{proof}
By means of the nondegeneracy condition \eqref{e:non degeneracy}, for all $x_0\in \Gamma_{1+s}(u)$, we deduce
\begin{equation*}
 \int_{\partial B_1} \uu_{x_0,r}^2\, |x_n|^a\, dx \geq H_0,
\end{equation*}
and if $(\uu_{x_0,r_k})_{k\in\N}$ is a sequence that converges to $u_0$ in $L^2(B_1,\mu_a)$, a blow-up function in $x_0$, 
due to estimate \eqref{e:estimate nabla ur} and the convergence of the traces in Theorem~\ref{T:traccia partialB1} 
we obtain the convergence of the traces of $u_{x_0,r_k}$ on $\partial B_1$; thus
\begin{equation*}
 \int_{\partial B_1} \uu_0\, |x_n|^a\, dx \geq H_0>0.
\end{equation*}
So we infer $\uu_0\not\equiv 0$ for all $\uu_0$ blow-up functions in a point of $\Gamma_{1+s}(u)$.

So, in view of Propositions~\ref{p:decay}, \ref{p:non degeneracy} and \cite[Proposition 5.5]{CSS} we can deduce 
the following result of the classification of blow-ups.

\begin{prop}[Classification of blow-ups]\label{p:class blowup 1+s}
 Let $\uu$ be a solution of the Problem \eqref{POF'}. Let $u_0$ be a blow-up of $u$ in point $x_0\in \Gamma_{1+s}(u)$. Then there exist
 a constant $\l>0$ and a vector $e\in \mathbb{S}^{n-2}$ such that $u_0 = \l\, h_e$.
\end{prop}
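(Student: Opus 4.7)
The plan is to assemble three pieces that are already in place. First, I would observe that any blow-up $u_0$ at a point $x_0 \in \Gamma_{1+s}(u)$ is a $(1+s)$-homogeneous \emph{global} solution of the free-boundary system \eqref{e:global solution}. Proposition~\ref{p:1+s homogeneity} already gives the homogeneity and the $C^{1+\alpha}_{\mathrm{loc}}$ convergence of a subsequence of rescalings $u_{x_0,r_k}$, each of which solves \eqref{POF'} on $B_{(1-|x_0|)/r_k}$; since these radii tend to $+\infty$, the limit $u_0$ inherits the PDE, the evenness in $x_n$, the sign condition on $B'_1$, and the distributional inequality on all of $\R^n$. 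The semiconvexity $\partial_{\tau\tau} u_0 \geq 0$ for $\tau \in \mathbb{S}^{n-2}$ comes from the analogous property of $u$ recalled from \cite{S}, which is stable under the strong convergence at hand.

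Second, I would rule out $u_0 \equiv 0$ using the nondegeneracy Proposition~\ref{p:non degeneracy} (itself a consequence of the epiperimetric inequality and the decay estimate \eqref{e:decay W}). After rescaling, nondegeneracy gives the uniform lower bound $\int_{\partial B_1} u_{x_0,r}^2 \, |x_n|^a\, d\mathcal{H}^{n-1} = r^{-(n+2)} H_a(r) \geq H_0 > 0$. The uniform $H^1(B_1,\mu_a)$ bound for the rescalings, provided by \eqref{e:estimate nabla ur} together with Lemma~\ref{l:H e N}(i), combined with the compact trace embedding Theorem~\ref{T:traccia partialB1}, allows this bound to pass to the limit, yielding $\|u_0\|_{L^2(\partial B_1, |x_n|^a\mathcal{H}^{n-1})}^2 \geq H_0 > 0$, so in particular $u_0 \not\equiv 0$.

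Finally, I would invoke the classification \cite[Proposition~5.5]{CSS}, according to which every $(1+s)$-homogeneous global solution of \eqref{e:global solution} coincides, up to a rotation acting on $\widehat{x}$ and a nonnegative multiplicative constant, with $h_{e_n}$. This yields $u_0 = \lambda h_e$ for some $e \in \mathbb{S}^{n-2}$ and $\lambda \geq 0$, and the nondegeneracy established in the previous step forces $\lambda > 0$. There is no genuine obstacle here, since the hard analytic work has already been carried out via the epiperimetric inequality and its corollaries; the only point to check carefully is that the passage to the limit preserves the one-sided Signorini constraint and the distributional sign condition, which is routine given the strong convergence of traces in $L^2(B'_1)$ from Theorem~\ref{T:tracciaFoc09} together with the weak convergence of the gradients.
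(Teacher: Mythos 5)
Your proposal is correct and follows exactly the paper's own route: establish $(1+s)$-homogeneity of the blow-up (Proposition~\ref{p:1+s homogeneity}), rule out triviality via the nondegeneracy bound $H_a(r)\geq H_0 r^{n+2}$ and the compact trace embedding, and then invoke \cite[Proposition~5.5]{CSS} to identify the nonzero $(1+s)$-homogeneous global solution as $\lambda h_e$ with $\lambda>0$. The only addition you make explicit — verifying that the Signorini constraint, evenness, distributional sign condition, and semiconvexity pass to the limit on $\R^n$ — is precisely what the paper leaves implicit in the phrase ``it is easy to prove that $u_{x_0}$ is a solution'', so no genuine difference in approach.
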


\subsection{The blow-up method: Uniqueness of blow-ups}\label{s:3!blowup}
By summarizing what we have been showing so far, due to estimate \eqref{e:estimate nabla ur} and to Theorem~\ref{T:1.31 HKM}, 
for all $x_0\in \Gamma_{1+s}(u)$ and for all sequences
$r_k\to 0$ there exists at least a subsequence (that we do not relabel in what follows) such that 
$\uu_{x_0,r_k}\rightharpoonup \uu_{x_0}$ in $H^1(B_1,\mu_a)$ for some nontrivial functions $\uu_{x_0}\in H^1(B_1,\mu_a)$.
It is easy to prove that $u_{x_0}$ is a solution of Problem \eqref{POF'}. 
Furthermore $u_{x_0}$ is $(1+s)$-homogeneous. 
According to Proposition \ref{p:class blowup 1+s},
the result of the classification of blow-ups, we obtain $u_{x_0}\in \mathfrak{H}_{1+s}$.

With the next Proposition we prove that the blow-up is unique, i.e. for all $x_0\in \Gamma_{1+s}(u)$ there exists a function
$u_{x_0}$ such that for all $r_k\to 0$ the sequence $(u_{x_0,r_k})_{k\in\N}$ converges to $u_{x_0}$ in $L^2(B_1,\mu_a)$.
This is again a consequence of epiperimetric inequality. In particular, the epiperimetric inequality provides 
an explicit rate of convergence of the rescaled function $u_{x_0,r}$.

\begin{prop}[{\cite[Proposition 4.8]{FS}}]\label{p:uniqueness blowup 1+s}
 Let $\uu$ be a solution of the Problem \eqref{POF'} and let $K\subset\subset B_1'$. Then there exists a positive constant $C>0$ 
 such that for all $x_0\in \Gamma_{1+s}(u)\cap K$ the following inequality holds:
 \begin{equation*}
  \int_{\partial B_1} |u_{x_0,r}-u_{x_0}|\,|x_n|^a\,d\mathcal{H}^{n-1}\leq C\, r^\frac{\gamma}{2},
 \end{equation*}
where $\gamma>0$ is the constant defined in Proposition \ref{p:decay}. In particular the blow-up is unique.
\end{prop}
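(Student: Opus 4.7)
The plan is to convert the algebraic decay of the Weiss energy from Proposition~\ref{p:decay} into a quantitative Cauchy-in-$r$ estimate for the curve $r \mapsto u_{x_0,r}|_{\partial B_1}$ in $L^2(\partial B_1, |x_n|^a\mathcal{H}^{n-1})$, and then sum a geometric series over dyadic scales. The mechanism is that Weiss' monotonicity formula (Proposition~\ref{p:Weiss 1+s}) encodes exactly the square of the $L^2(\partial B_1,|x_n|^a)$-norm of $\partial_r u_{x_0,r}$, up to a factor $1/(2r)$.

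Concretely, I would first differentiate the rescaling directly: since $u_{x_0,r}(x) = r^{-(1+s)} u(x_0 + rx)$, for $x \in \partial B_1$ an elementary computation gives
\[
\partial_r u_{x_0,r}(x) = \frac{1}{r}\bigl(\nabla u_{x_0,r}(x) \cdot \nu - (1+s)\, u_{x_0,r}(x)\bigr),
\]
so that Proposition~\ref{p:Weiss 1+s} rewrites as
\[
\int_{\partial B_1} |\partial_r u_{x_0,r}|^2\, |x_n|^a\,d\mathcal{H}^{n-1} = \frac{1}{2r}\,\frac{d}{dr} W^{x_0}_{1+s}(r,u).
\]
Next, the fundamental theorem of calculus in the Hilbert space $L^2(\partial B_1, |x_n|^a\mathcal{H}^{n-1})$, combined with Cauchy--Schwarz on a dyadic annulus $(r, 2r)$ and monotonicity of $W^{x_0}_{1+s}(\cdot, u)$, gives
\[
\|u_{x_0,2r} - u_{x_0,r}\|_{L^2(\partial B_1,|x_n|^a)} \leq \int_r^{2r}\!\sqrt{\tfrac{1}{2s}\,\tfrac{d}{ds} W^{x_0}_{1+s}(s,u)}\,ds \leq \sqrt{W^{x_0}_{1+s}(2r,u)} \leq C\, r^{\gamma/2},
\]
where the last inequality is Proposition~\ref{p:decay}. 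Summing over dyadic scales $r_k = 2^{-k}\,r$ produces a geometric series with ratio $2^{-\gamma/2} < 1$, so $(u_{x_0,r_k})_k$ is Cauchy in $L^2(\partial B_1,|x_n|^a)$, its limit $u_{x_0}$ satisfies $\|u_{x_0,r} - u_{x_0}\|_{L^2(\partial B_1,|x_n|^a)} \leq C\, r^{\gamma/2}$, and Cauchy--Schwarz against the finite measure $|x_n|^a \mathcal{H}^{n-1}\llcorner \partial B_1$ converts this into the $L^1$ bound of the statement. Uniqueness of the blow-up is then immediate, since convergence now holds as $r \to 0^+$ rather than only along subsequences, and the limit must coincide with every subsequential blow-up produced by Proposition~\ref{p:1+s homogeneity}.

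The main technical point is the uniformity of the constant $C$ as $x_0$ ranges over the compact set $\Gamma_{1+s}(u)\cap K$. Since $\mathrm{dist}(K,\partial B_1) > 0$, the gradient bound \eqref{e:estimate nabla ur} on $u_{x_0,r}$ is uniform in $x_0 \in K$, which in turn bounds $W^{x_0}_{1+s}(1,u)$ uniformly and hence the constant in Proposition~\ref{p:decay}; these uniform bounds then propagate through the dyadic summation with no further loss. The conceptual punchline is that reading the resulting inequality backwards recovers uniqueness of the blow-up with an explicit convergence rate, bypassing any reliance on compactness-plus-subsequence arguments.
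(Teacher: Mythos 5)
Your proof is correct and uses the standard Weiss-type mechanism; the paper itself gives no proof here, deferring to \cite[Proposition~4.8]{FS}, which proceeds in exactly this way. The crux is indeed the identity $\partial_r u_{x_0,r}|_{\partial B_1}=\tfrac1r\bigl(\nabla u_{x_0,r}\cdot\nu-(1+s)u_{x_0,r}\bigr)$, which rewrites Proposition~\ref{p:Weiss 1+s} as $\|\partial_r u_{x_0,r}\|^2_{L^2(\partial B_1,|x_n|^a\,\mathcal{H}^{n-1})}=\tfrac1{2r}\tfrac{d}{dr}W^{x_0}_{1+s}(r,u)$, after which the dyadic Cauchy--Schwarz summation together with the decay of Proposition~\ref{p:decay} produces the Cauchy rate $r^{\gamma/2}$ and hence uniqueness.
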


\section{The regularity of the free-boundary}\label{s:3reg}
Thanks to the uniqueness of blow-ups 
following the proof of  \cite[Proposition 4.10]{FS} 
it is possible to give a proof of the $C^{1,\a}$ regularity of $\Gamma_{1+s}(u)$ 
the subset of the free-boundary with lower frequency. 

\begin{teo}\label{t:reg 1+s}
 Let $\uu\in H^1(B_1,\mu_a)$ be a solution of the Problem \eqref{POF'}. Then, there exists a constant $\alpha>0$ such that for all
 $x_0\in \Gamma_{1+s}(\uu)$ there exists a radius $r=r(x_0)$ for which $\Gamma_{1+s}(\uu)\cap B_r'(x_0)$ is a $C^{1,\a}$ 
 regular $(n-2)$-submanifold in $B_1'$.
\end{teo}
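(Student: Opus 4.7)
The plan is to deduce the $C^{1,\alpha}$ regularity from the explicit rate of blow-up convergence in Proposition~\ref{p:uniqueness blowup 1+s}, following the scheme of \cite[Proposition~4.10]{FS}. The two ingredients I would exploit are: (i) at every $x_0\in\Gamma_{1+s}(u)$ the unique blow-up is of the form $u_{x_0}=\lambda(x_0)\,h_{e(x_0)}$ with $\lambda(x_0)>0$ and $e(x_0)\in\mathbb{S}^{n-2}$ (Proposition~\ref{p:class blowup 1+s} together with the nondegeneracy Proposition~\ref{p:non degeneracy}); and (ii) the rate estimate $\int_{\partial B_1}|u_{x_0,r}-u_{x_0}|\,|x_n|^a\,d\mathcal{H}^{n-1}\leq C r^{\gamma/2}$, uniform on compacts of $\Gamma_{1+s}(u)$.

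First, I would establish the Hölder continuity of the map $x_0\mapsto (\lambda(x_0),e(x_0))$ on any compact $K\Subset B_1'$. The idea is to compare two nearby blow-ups via a triangle inequality at a suitably chosen scale. For $x_0,y_0\in\Gamma_{1+s}(u)\cap K$ set $\rho:=|x_0-y_0|^{1/2}$ and write
\begin{equation*}
\|u_{x_0}-u_{y_0}\|_{L^1(\partial B_1,|x_n|^a)}\leq \|u_{x_0}-u_{x_0,\rho}\|_{L^1}+\|u_{x_0,\rho}-u_{y_0,\rho}\|_{L^1}+\|u_{y_0,\rho}-u_{y_0}\|_{L^1}.
\end{equation*}
The outer terms are $O(\rho^{\gamma/2})$ by Proposition~\ref{p:uniqueness blowup 1+s}. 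For the middle term I would use a change of variables $u_{y_0,\rho}(x)=\rho^{-(1+s)}u(y_0+\rho x)$ and the uniform $C^{1,\alpha}$-type bounds \eqref{u e grad u limitate 1+s} to estimate $\|u_{x_0,\rho}-u_{y_0,\rho}\|_{L^\infty(\partial B_1)}\leq C\rho^{-(1+s)}\cdot\rho\cdot\|\nabla u\|_{L^\infty}=O(\rho^{s})$. Since $\mathfrak{H}_{1+s}\setminus\{0\}$ is parametrized smoothly by $(\lambda,e)$ and the map $h_e$ depends smoothly on $e$ with non-degenerate derivative transverse to dilations, this translates into $|\lambda(x_0)-\lambda(y_0)|+|e(x_0)-e(y_0)|\leq C|x_0-y_0|^{\beta}$ for some exponent $\beta\in(0,1)$.

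Next I would show that $\Gamma_{1+s}(u)$ is locally the graph of a $C^{1,\alpha}$ function. Fix $x_0\in\Gamma_{1+s}(u)$ and work in coordinates with $e(x_0)=e_{n-1}$. Because the coincidence set of $\lambda(x_0)\,h_{e_{n-1}}$ is exactly the half-hyperplane $\{\widehat{x}\cdot e_{n-1}\leq 0,\,x_n=0\}$, the rate estimate combined with nondegeneracy forces, for every $y_0\in\Gamma_{1+s}(u)\cap B_r'(x_0)$ with $r$ small, a geometric trapping of $\Gamma_{1+s}(u)$ in a cusp around the affine hyperplane through $y_0$ orthogonal to $e(y_0)$; quantitatively, for any unit direction $\tau$ with $\tau\cdot e(x_0)\geq 1/2$, the set $\Lambda(u)\cap B_r'(x_0)$ contains a half-neighborhood and its complement contains the opposite half-neighborhood up to an error controlled by $|x_0-y_0|^{\beta}$. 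This is the standard ``flatness implies Lipschitz-then-$C^{1,\alpha}$'' mechanism and lets us write $\Gamma_{1+s}(u)\cap B_r'(x_0)$ as a graph $x_{n-1}=\varphi(x_1,\dots,x_{n-2})$ whose normal direction at $y_0$ is $e(y_0)$. Hölder continuity of $e$ then yields $\varphi\in C^{1,\alpha}$ with $\alpha=\beta$.

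The main obstacle is the second step, namely upgrading the $L^1$-closeness of blow-ups to an effective geometric control on the free-boundary: one needs a nondegenerate transversality of the family $\{h_e\}_{e\in\mathbb{S}^{n-2}}$ in order to convert $\|u_{x_0}-u_{y_0}\|_{L^1}$ into a bound on $|e(x_0)-e(y_0)|$, and one needs quantitative versions of the implicit function theorem relating the trace of $u_{x_0,r}$ on $B'_1$ to the location of the free-boundary near $x_0$. Both points are addressed exactly as in \cite[Proposition~4.10]{FS} once Proposition~\ref{p:uniqueness blowup 1+s} is in force, so we omit further details.
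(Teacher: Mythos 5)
Your proposal reproduces the strategy that the paper itself invokes: the paper gives no standalone proof of Theorem~\ref{t:reg 1+s}, stating explicitly that it is obtained ``following the proof of \cite[Proposition 4.10]{FS}'' from the uniqueness of blow-ups with a quantitative rate (Proposition~\ref{p:uniqueness blowup 1+s}), the classification of blow-ups (Proposition~\ref{p:class blowup 1+s}), and nondegeneracy. Your two-stage scheme — first deducing Hölder continuity of $x_0\mapsto(\lambda(x_0),e(x_0))$ via a triangle inequality at scale $\rho=|x_0-y_0|^{1/2}$, then using flatness at all small scales to write $\Gamma_{1+s}(u)$ locally as a graph with Hölder-continuous normal $e(\cdot)$ — is precisely that argument, and the appeal to the non-degenerate transversality of $\{h_e\}$ in $\mathfrak{H}_{1+s}\setminus\{0\}$ to convert $L^1$-closeness of traces into closeness of parameters is the correct technical point.

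One small quantitative slip in the middle-term estimate: with $|x_0-y_0|=\rho^2$ and both points lying in $B_1'$ (so the displacement is tangential and one can use $\|\nabla_{\widehat x}u\|_{L^\infty(B_{C\rho})}\leq C\rho^s$ from \eqref{u e grad u limitate 1+s}), one gets
\begin{equation*}
\|u_{x_0,\rho}-u_{y_0,\rho}\|_{L^\infty(\partial B_1)}\leq \rho^{-(1+s)}\,|x_0-y_0|\,\|\nabla_{\widehat x}u\|_{L^\infty(B_{C\rho})}\leq C\rho^{-(1+s)}\cdot\rho^2\cdot\rho^s=C\rho,
\end{equation*}
not $O(\rho^s)$ as you wrote (you dropped a power of $\rho$ from $|x_0-y_0|=\rho^2$ and used the wrong gradient bound). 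The corrected rate is actually better, so the Hölder continuity of $(\lambda,e)$ and the resulting $C^{1,\alpha}$ graph structure still follow; just fix the exponent bookkeeping.
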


\appendix
\section{Appendix}
In this Appendix we report a result of structure of a $(1+s)$-homogeneous solution of \eqref{e:PDEzinfty Appendix}
due to Garofalo, Petrosyan, Pop and Smit Vega Garcia \cite[Lemma A.3]{GPPS}. 
Recently Focardi and Spadaro in \cite[Proposition A.3]{FS16} extended this result analysing
the structure of $\l$-homogeneous solutions of \eqref{e:PDEzinfty Appendix} (also with different contact set) with 
$\l\in\{m, m+s, m+2s \,:\,  m\in \N^+, \l\geq 1+s \}$.
\begin{lem}
Let is $z_\infty$ a $(1+s)$-homogeneous solution of
\begin{align}\label{e:PDEzinfty Appendix}
		 \left\{  \begin{array}{ll}
			  L_a z_\infty= 0 & 						\quad B_1\setminus {B'}_1^{',-}\\ 
			  z_\infty= 0	  &					    	\quad {B'}_1^{,-},
			\end{array} \right.
\end{align}
even symmetric w.r.to $\{x_n=0\}$.
Then there exist real constants $a_0, \dots, a_{n-2}$ such that
\begin{equation}\label{structure zinfty}
 z_\infty = a_0 h + \left(\sum_{i=1}^{n-2}a_i x_i\right) \left(\sqrt{x_{n-1}^2+x_n^2}+x_{n-1}\right)^s, 
\end{equation}
or rather $z_\infty\in T_h \mathfrak{H}_{1+s}$.
\end{lem}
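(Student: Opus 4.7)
The plan is to reduce the $n$-dimensional classification to a two-dimensional one via differentiation in the tangential variables $\hat{x}' := (x_1, \dots, x_{n-2})$, exploiting the translation invariance of the problem in these directions.

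First I would observe that both $L_a$ and the contact set $B_1^{',-} = \{x_n=0,\, x_{n-1}\leq 0\}$ commute with $\partial_{x_i}$ for $i=1,\dots,n-2$. Consequently, by interior regularity for $L_a$ in these smooth directions, $z_\infty$ is smooth in $\hat{x}'$ away from the edge $\{x_{n-1}=x_n=0\}$, and every tangential derivative $\partial_{\hat{x}'}^\alpha z_\infty$ is again a $(1+s-|\alpha|)$-homogeneous solution of the same linear Dirichlet problem \eqref{e:PDEzinfty Appendix}, still even in $x_n$ and vanishing on $B_1^{',-}$.

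The crucial step is the spectral statement that the admissible homogeneity exponents of nontrivial solutions of this problem are exactly $\{s+k : k\in \N\cup\{0\}\}$, and in particular are bounded below by $s$. This follows by separation of variables: writing $v=r^\lambda g(\omega)$ on the unit sphere and decomposing $g$ as a product of spherical harmonics in $\hat{x}'$ with a one-dimensional angular function in the $(x_{n-1},x_n)$-plane, one is led to a Sturm--Liouville problem on $(0,\pi)$ with Dirichlet condition at $\theta=\pi$ and an even/regular condition at $\theta=0$. The minimal eigenvalue is $\lambda=s$, realized, up to a multiplicative constant, by the two-dimensional function $(\sqrt{x_{n-1}^2+x_n^2}+x_{n-1})^s$; the next is $\lambda=1+s$, realized by $h$. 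Applying this lower bound to the $(s-1)$-homogeneous function $\partial_i\partial_j z_\infty$ for $i,j\in\{1,\dots,n-2\}$ forces $\partial_i\partial_j z_\infty\equiv 0$.

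Hence $z_\infty$ is affine in $\hat{x}'$:
\[
z_\infty(\hat{x}', x_{n-1}, x_n) = f_0(x_{n-1}, x_n) + \sum_{i=1}^{n-2} x_i\, f_i(x_{n-1}, x_n),
\]
with $f_0$ being $(1+s)$-homogeneous and each $f_i$ being $s$-homogeneous in two variables. Since $\Delta_{\hat{x}'}z_\infty\equiv 0$ by construction, the equation $L_a z_\infty=0$ reduces to the two-dimensional equation $L_a f_\beta=0$ in $\R^2\setminus\{x_{n-1}\leq 0,\,x_n=0\}$ for each $\beta\in\{0,1,\dots,n-2\}$. The 2D classification at the two lowest eigenvalues, each realized by a one-dimensional eigenspace from the Sturm--Liouville analysis, then yields $f_i = a_i(\sqrt{x_{n-1}^2+x_n^2}+x_{n-1})^s$ for $i=1,\dots,n-2$ and $f_0=a_0\,h$, establishing the stated formula. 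The main obstacle is the Sturm--Liouville spectral analysis underlying the eigenvalue set $\{s+k\}$ together with the one-dimensionality of its lowest eigenspaces; this is a technical but essentially explicit ODE computation.
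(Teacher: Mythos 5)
Your proposal is structurally very close to the paper's proof: differentiate twice in the tangential directions $x_1,\dots,x_{n-2}$, kill those second derivatives to reduce to an affine-in-$\hat{x}'$ expression with two-variable coefficients $f_0,\dots,f_{n-2}$, then classify those coefficients via a one-dimensional Sturm--Liouville analysis in the $(x_{n-1},x_n)$-plane. The final 2D classification step is essentially identical to the paper's.

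The difference lies in how you annihilate $\partial_i\partial_j z_\infty$. You invoke a spectral lower bound, namely that any nontrivial homogeneous solution of the Dirichlet problem \eqref{e:PDEzinfty Appendix} has exponent $\geq s$, and then observe that $s-1<s$ forces $\partial_i\partial_j z_\infty\equiv 0$. The paper takes a more elementary and self-contained route: by the regularity estimates for $L_a$ (boundedness of all tangential derivatives $\partial_{\alpha}z_\infty$ in $B_{1/2}$, via \cite[Lemma 2.4.1]{FKS} and \cite[Proposition 2.3]{CSS}, plus interior continuity away from the edge $\{x_{n-1}=x_n=0\}$), the function $\partial_i\partial_j z_\infty$ is bounded near the origin; but it is also $(s-1)$-homogeneous with $s-1<0$, and a nontrivial negatively homogeneous function cannot be bounded at the origin. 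This avoids the spectral lower bound entirely, which is a real simplification since that lower bound is at least as hard to establish as the boundedness fact you would need anyway to make sense of the derivatives. (Incidentally, your claimed spectrum $\{s+k:k\in\N\cup\{0\}\}$ is not quite the full spectrum in dimension $n\geq 3$ once spherical-harmonic degrees in $\hat{x}'$ enter; only the lower bound $\geq s$ is needed, and that part is correct.) You should also state explicitly the regularity ingredients (boundedness and continuity of the derivatives, from the cited degenerate-elliptic estimates) that make the differentiation and the Taylor expansion legitimate; the paper cites these in detail, and without them the decomposition $z_\infty = f_0 + \sum_i x_i f_i$ and the identification of each $f_\beta$ as an $H^1(\mu_a)$-solution of the 2D problem are not justified.
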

\begin{proof}
For all multi-indices $\a\in \N^{n-2}$ the derivative $\partial_\a z_\infty$ is the solution of
\begin{align}\label{e:eq diff der zinfty}
		 \left\{  \begin{array}{ll}
			  L_a \partial_\a z_\infty= 0 & 						\quad B_1\setminus {B'}_1^{,-}\\ 
			  \partial_\a z_\infty= 0	  &					    	\quad {B'}_1^{,-},
			\end{array} \right.
\end{align}
According to \cite[Lemma 2.4.1]{FKS} and \cite[Proposition 2.3]{CSS} the derivative $\partial_\a z_\infty$ are bounded in $B_{1/2}$, 
thanks to \cite[Theorems 2.3.12 and 2.4.6]{FKS} they are also continuous in $B_{1/2}\setminus \{x_{n-1}=x_n=0\}$.
We consider the second derivative $\partial_{ij} z_\infty$ with $i,j=1,\dots,n-2$: since $z_\infty$ is $(1+s)$-homogeneous, 
the function $\partial_{ij} z_\infty$ is $(s-1)$-homogeneous; as $0<s<1$ from the boundedness of the derivative we deduce 
\begin{equation}\label{e:de_ij zinfty =0}
\partial_{ij} z_\infty= 0 \qquad\qquad \textit{in } B_1\qquad\qquad \forall i,j=1,\dots,n-2.
\end{equation}
The solution $z_\infty$ is a smooth function in $B_{1/2}^+$ and $B_{1/2}^-$ because the coefficients of the strictly elliptic 
operator $L_a$ are smooth in these domains. Thus, fixed $x_{n-1}$ and $x_n$, we can write the first order Taylor polynomial 
of $z_\infty(\cdot,x_{n-1},x_n)$ in $(\underline{0}',x_{n-1},x_n)$
\begin{equation*}
 z_\infty(x',x_{n-1},x_n)= c_0(x_{n-1},x_n) + \sum_{i=1}^{n-2} c_i(x_{n-1},x_n) x_i,
\end{equation*}
with $c_0(x_{n-1},x_n)=z_\infty(\underline{0}',x_{n-1},x_n)$ and $c_i(x_{n-1},x_n)=\partial_i z_\infty(\underline{0}',x_{n-1},x_n)$.
By definition the function $c_0(x_{n-1},x_n)$ is $(1+s)$-homogeneous and the functions $c_i(x_{n-1},x_n)$ are $s$-homogeneous.
Since $z_\infty$ and $\partial_i z_\infty$ are continuous in $B_{1/2}\setminus \{x_{n-1}=x_n=0\}$ the function $c_0(x_{n-1},x_n)$ and
$c_i(x_{n-1},x_n)$ are continuous in $\mathcal{B}_{1/2}\setminus \{x_{n-1}=0\}$ 
with $\mathcal{B}_{1/2}:=\{(x_{n-1},x_n)\in \R^2\,\,:\,\, x_{n-1}^2+x_n^2<1/4\}$. 
Thanks to homogeneity with positive degree $c_0(x_{n-1},x_n)$ and $c_i(x_{n-1},x_n)$ are continuous in $\mathcal{B}_{1/2}$.\\

Taking into account \eqref{e:de_ij zinfty =0}, for all $i=1,\dots,n-2$ we obtain
\begin{equation*}
\begin{split}
 c_i(x_{n-1},x_n)=\partial_i z_\infty(x',x_{n-1},x_n)\\
 c_0(x_{n-1},x_n)=z_\infty(x',x_{n-1},x_n),
\end{split}
\end{equation*}
thus $c_i,c_0\in H^1(\mathcal{B}_{1/2}^\pm,|x_n|^a\,\mathcal{L}^2)$ and are solutions of \eqref{e:eq diff der zinfty} 
on $\mathcal{B}_{1/2}^\pm$.
Since $c_i(x_{n-1},x_n)$ is $s$-homogeneous there exist some constants $(\tilde{a}_i)_{i=1,\dots,n-2}$ such that 
$c_i(x_{n-1},0)=\tilde{a}_i x_{n-1}^s$ when $x_{n-1}>0$ and similarly since $c_0(x_{n-1},x_n)$ is $(1+s)$-homogeneous, 
there exists a constant $\tilde{a}_0$ such that $c_0(x_{n-1},0)=\tilde{a}_0 x_{n-1}^s$ when $x_{n-1}>0$.

We show that 
\begin{equation}\label{e:c_i structure zinfty}
 c_i(x_{n-1},x_n)=\frac{\tilde{a}_i}{2^s}\left(x_{n+1}+\sqrt{x_{n-1}^2+x_n^2}\right)^s.
\end{equation}
Passing to polar coordinates we can write $c_i(x_{n-1},x_n)=d_i(r,\theta)=r^s\phi_i(\theta)$. 
From $L_a c_i=0$ we deduce that the function $\phi_i$ is the solution of the following second order ordinary differential equation
\begin{equation*}
\left\{  \begin{array}{ll}
  \sin\theta \phi_{\theta\theta} + a \cos\theta \phi_\theta + (a(1+s)x+(1+s)^2)\sin\theta \phi =0   & \quad \textit{in }\,(0,\pi)\\ 
  \phi(0)=\frac{\tilde{a}_i}{2^s}&\\
  \phi(\pi)=0,&
        	\end{array} \right.
\end{equation*}
and so it has a unique solution. Resorting to a direct calculation, we can verify that the function 
\begin{equation*}
 \phi_i(\theta)=\frac{\tilde{a}_i}{2^s}(\cos\theta +1)^s 
\end{equation*}
is solution for all $\theta\in [0,\pi]$. So the function $c_i(x_{n-1},x_n)$ satisfies \eqref{e:c_i structure zinfty}.\\

By proceeding in the same way we prove that the function $c_0(x_{n-1},x_n)$ can be written as
\begin{equation*}
 c_0(x_{n-1},x_n)=\frac{\tilde{a}_0}{2^s(s-1)}\left(x_{n+1}+\sqrt{x_{n-1}^2+x_n^2}\right)^s\left(x_{n+1}-\sqrt{x_{n-1}^2+x_n^2}\right),
\end{equation*}
and this provides the conclusion to the proof of the step.
\end{proof}

\subsection*{Acknowledgments} The author is warmly grateful to Professor Matteo Focardi for his suggestion of the problem 
 and for his constant support and encouragement. The author is partially supported by project GNAMPA $2017$ 
 ``Regolarità per Problemi Variazionali Liberi e di Ostacolo''.
 The author is member of the Gruppo Nazionale per l'Analisi Matematica, la Probabilità e le loro Applicazioni (GNAMPA) of
the Istituto Nazionale di Alta Matematica (INdAM).

{\footnotesize

\vspace{0.5cm}
\noindent\textsc{DiMaI, Università degli studi di Firenze}\\
\emph{Current address:} Viale Morgagni 67/A, 50134 Firenze (Italy)\\
\emph{E-mail address:} \texttt{geraci@math.unifi.it}
}

\begin{thebibliography}{99}\addcontentsline{toc}{section}{\bibname}
\bibitem{ACF}
 Alt, H. W.; Caffarelli, L. A.; Friedman, A., 
 \emph{Variational problems with two phases and their free boundaries}. 
 Trans. Amer. Math. Soc. 282 (1984), no. 2, 431--461.

 \bibitem{AC}
  Athanasopoulos, I.; Caffarelli, L. A.,
  \emph{Optimal regularity of lower dimensional obstacle problems}.
  Zap. Nauchn. Sem. S.-Peterburg. Otdel. Mat. Inst. Steklov. (POMI) 310 (2004), 49–66, 226; 
  translation in J. Math. Sci. (N. Y.), 132 (2006), no. 3, 274–284.

 \bibitem{ACS}
  Athanasopoulos, I.; Caffarelli, L. A.; Salsa, S., 
  \emph{The structure of the free boundary for lower dimensional obstacle problems}. 
  Amer. J. Math. 130 (2008), no. 2, 485--498.
 

\bibitem{BG90}
Bouchaud, J. P.; Georges, A.
 \emph{Anomalous diffusion in disordered media: statistical mechanisms, models and physical applications.} 
 Phys. Rep. 195 (1990), no. 4-5, 127–293.

 \bibitem{Brezis}
 Brezis, H. R.,
 \emph{Analisi funzionale}. 
 Ed. Liguori, Napoli, (1986). xv+419pp.

 \bibitem{Caf77}
 Caffarelli, L. A., 
 \emph{The regularity of free boundaries in higher dimensions}. 
 Acta Math. 139 (1977), no. 3-4, 155--184.

\bibitem{Caf80}
 Caffarelli, L. A., 
 \emph{Compactness methods in free boundary problems}. 
 Comm. Partial Differential Equations 5 (1980), no. 4, 427--448.

 \bibitem{Caf98-Fermi} Caffarelli, L. A., 
 The obstacle problem revisited. 
 Lezioni Fermiane. [Fermi Lectures] Accademia Nazionale dei Lincei, Rome; 
 Scuola Normale Superiore, Pisa, 1998. ii+54 pp.

 \bibitem{Caf98} Caffarelli, L. A., 
 \emph{The obstacle problem revisited}. 
 J. Fourier Anal. Appl. 4 (1998), no. 4-5, 383--402.
 
 \bibitem{CF13}
 Caffarelli, L. A.; Figalli, A.,
\emph{Regularity of solutions to the parabolic fractional obstacle problem}. 
J. Reine Angew. Math. 680 (2013), 191–233.
 
{\bibitem{CaffaKind80}
Caffarelli, L. A.; Kinderlehrer, D.,
\emph{Potential methods in variational inequalities}. J. Analyse Math. 37 (1980), 285--295.
}

\bibitem{CS05}
Caffarelli, L.; Salsa, S.,
\emph{A geometric approach to free boundary problems}.
Graduate Studies in Mathematics, 68. American Mathematical Society, Providence, RI, 2005. x+270 pp.

\bibitem{CSS}
Caffarelli, L. A.; Salsa, S.; Silvestre, L.,
\emph{Regularity estimates for the solution and the free boundary of the obstacle problem for the fractional Laplacian}. 
Invent. Math. 171 (2008), no. 2, 425--461.

\bibitem{CS}
Caffarelli, L.; Silvestre, L., 
\emph{An extension problem related to the fractional Laplacian}. 
Comm. Partial Differential Equations 32 (2007), no. 7-9, 1245--1260.

\bibitem{CV10}
Caffarelli, L. A.; Vasseur, A.
\emph{Drift diffusion equations with fractional diffusion and the quasi-geostrophic equation}. 
 Ann. of Math. (2) 171 (2010), no. 3, 1903–1930
 
\bibitem{CT04}
Cont, R.; Tankov, P.
 \emph{Financial modelling with jump processes}. 
 Chapman \& Hall/CRC Financial Mathematics Series. Chapman \& Hall/CRC, Boca Raton, FL, 2004. xvi+535 pp.

 \bibitem{DalMaso}
 Dal Maso, G.,
 \emph{An introduction to $\Gamma$-convergence}.
 Progress in Nonlinear Differential Equations and their Applications, 8. Birkhäuser Boston, Inc., Boston, MA, 1993. xiv+340 pp.
 
\bibitem{DL76}
Duvaut, G.; Lions, J.-L. 
\emph{Inequalities in mechanics and physics}. 
Translated from the French by C. W. John. Grundlehren der Mathematischen Wissenschaften, 219. 
Springer-Verlag, Berlin-New York, 1976. xvi+397 pp.

\bibitem{FKS}
 Fabes, E. B.; Kenig, C. E.; Serapioni, R. P.,
 \emph{The local regularity of solutions of degenerate elliptic equations}.
 Comm. Partial Differential Equations 7 (1982), no. 1, 77--116. 

 \bibitem{Foc09}
 Focardi, M.,
 \emph{Homogenization of random fractional obstacle problems via $\Gamma$-convergence}. 
 Comm. Partial Differential Equations 34 (2009), no. 10-12, 1607–1631.

 \bibitem{Foc10}
 Focardi, M.,
 \emph{Aperiodic fractional obstacle problems}. 
 Adv. Math. 225 (2010), no. 6, 3502--3544.

\bibitem{Foc12}
 Focardi, M.,
 \emph{Vector-valued obstacle problems for non-local energies}. 
 Discrete Contin. Dyn. Syst. Ser. B 17 (2012), no. 2, 487--507.

\bibitem{FGS}
 Focardi, M.; Gelli, M. S.; Spadaro, E., 
 \emph{Monotonicity formulas for obstacle problems with Lipschitz coefficients}. 
 Calc. Var. Partial Differential Equations 54 (2015), no. 2, 1547--1573.
 
 \bibitem{FGerS}
 Focardi, M.; Geraci, F.; Spadaro, E.,
 \emph{The classical obstacle problem for nonlinear variational energies}.
 Nonlinear Anal. 154 (2017), 71–87.
 
\bibitem{FS}
Focardi, M.; Spadaro, E.,
\emph{An epiperimetric inequality for the thin obstacle problem}. 
Adv. Differential Equations 21 (2015), no 1-2, 153-200.

\bibitem{FS16}
Focardi, M.; Spadaro, E.,
\emph{On the measure and the structure of the free boundary of the lower dimensional obstacle problem}.
arXiv:1703.00678v2.

\bibitem{Fried}
Friedman, A., 
\emph{Variational principles and free-boundary problems}. 
Second edition. Robert E. Krieger Publishing Co., Inc., Malabar, FL, 1988. x+710 pp. 

\bibitem{GP09}
Garofalo, N.; Petrosyan, A.,
\emph{Some new monotonicity formulas and the singular set in the lower dimensional obstacle problem}. 
Invent. Math. 177 (2009), no. 2, 415--461.

\bibitem{GPS16}
Garofalo, N.; Petrosyan, A.; Smit Vega Garcia, M.,
\emph{An epiperimetric inequality approach to the regularity of the free boundary in the Signorini problem with variable coefficients}. 
J. Math. Pures Appl. (9) 105 (2016), no. 6, 745--787.

\bibitem{GPPS}
Garofalo, N., Petrosyan, A., Pop, C. A., Smit Vega Garcia, M.,  
\emph{Regularity of the free boundary for the obstacle problem for the fractional Laplacian with drift}. 
 Ann. Inst. H. Poincaré Anal. Non Linéaire 34 (2017), no. 3, 533–570. 

 \bibitem{GPS16}
 Garofalo, N.; Petrosyan, A.; Smit Vega Garcia, M., 
 \emph{An epiperimetric inequality approach to the regularity of the free boundary in the Signorini problem with variable coefficients.}
 J. Math. Pures Appl. (9) 105 (2016), no. 6, 745--787.

 \bibitem{GR17}
 Garofalo, N.; Ros-Oton, X.
 \emph{Structure and regularity of the singular set in the obstacle problem for the fractional Laplacian}
 arXiv:1704.00097.
 
 \bibitem{GS14}
  Garofalo, N.; Smit Vega Garcia, M., 
  \emph{New monotonicity formulas and the optimal regularity in the Signorini problem with variable coefficients.}
  Adv. Math. 262 (2014), 682--750.

  \bibitem{Geraci16} Geraci, F.,
 \emph{The classical obstacle problem with coefficients in fractional Sobolev spaces}.
 DOI: 10.1007/s10231-017-0692-x.

\bibitem{Geraci-Thesis}
 Geraci, F.,
 \emph{The Classical Obstacle Problem for nonlinear variational energies and related problems}.
 Ph.D. Thesis.
 
 \bibitem{GT}
 Gilbarg, D.; Trudinger, N. S., 
 \emph{Elliptic partial differential equations of second order}. 
 Reprint of the 1998 edition. Classics in Mathematics. Springer-Verlag, Berlin, 2001. xiv+517 pp.
 
 \bibitem{Gi03} Giusti, E., 
 \emph{Direct methods in the calculus of variations}. 
 World Scientific Publishing Co., Inc., River Edge, NJ, 2003. viii+403 pp.
 
 \bibitem{HK}
  Hajlasz, P.; Koskela, P., 
 \emph{Sobolev met Poincaré}. 
 Mem. Amer. Math. Soc. 145 (2000), no. 688, x+101 pp.

 \bibitem{HKM}
 Heinonen, J.; Kilpeläinen, T.; Martio, O., 
 \emph{Nonlinear potential theory of degenerate elliptic equations}. 
 Oxford Mathematical Monographs. Oxford Science Publications. The Clarendon Press, Oxford University Press, New York, 1993. vi+363 pp.
 
 \bibitem{KN77}
 Kinderlehrer, D.; Nirenberg, L.,
 \emph{Regularity in free boundary problems.}
 Ann. Scuola Norm. Sup. Pisa Cl. Sci. (4) 4 (1977), no. 2, 373--391.
 
\bibitem{KS} 
 Kinderlehrer, D.; Stampacchia, G.,
 \emph{An introduction to variational inequalities and their applications}. 
 Pure and Applied Mathematics, 88. Academic Press, Inc. [Harcourt Brace Jovanovich, Publishers], New York-London, 1980. xiv+313 pp.
 
 \bibitem{Kufner}
  Kufner, A., \emph{Weighted Sobolev spaces}. 
  Teubner-Texte zur Mathematik [Teubner Texts in Mathematics], 31. BSB B. G. Teubner Verlagsgesellschaft, Leipzig, 1980. 151 pp.

\bibitem{MichaelZiemer86} Michael, J. H.; Ziemer, W. P., 
\emph{Interior regularity for solutions to obstacle problems}. 
Nonlinear Anal. 10 (1986), no. 12, 1427--1448.

\bibitem{PP15}
Petrosyan, A.; Pop, C. A.,
Optimal regularity of solutions to the obstacle problem for the fractional Laplacian with drift. (English summary)
J. Funct. Anal. 268 (2015), no. 2, 417--472. 
 
\bibitem{PSU} 
Petrosyan, A.; Shahgholian, H.; Uraltseva, N.,
\emph{Regularity of free boundaries in obstacle-type problems}.
Graduate Studies in Mathematics, 136. American Mathematical Society, Providence, RI, 2012. x+221 pp.
 
\bibitem{Rod} 
Rodrigues, J.-F., 
\emph{Obstacle problems in mathematical physics}. 
North-Holland Mathematics Studies, 134. Notas de Matemática [Mathematical Notes], 114. North-Holland Publishing Co., Amsterdam, 1987. xvi+352 pp.

\bibitem{S}
 Silvestre, L.,
 \emph{Regularity of the obstacle problem for a fractional power of the Laplace operator}. 
 Comm. Pure Appl. Math. 60 (2007), no. 1, 67--112.
 
\bibitem{Weiss}
Weiss, G. S., 
\emph{A homogeneity improvement approach to the obstacle problem}. 
Invent. Math. 138 (1999), no. 1, 23--50.
\end{thebibliography}
\end{document}